\documentclass[11pt]{amsart}

\usepackage[
  margin=1in
]{geometry}

\usepackage{amscd}
\usepackage{amssymb}
\usepackage{amsmath}
\usepackage{amsfonts}
\usepackage{enumerate}
\usepackage[all,cmtip]{xy}
\usepackage{tikz-cd}
\usepackage{hyperref}
\usepackage{xcolor}
\usepackage{adjustbox}
\usepackage{graphicx}
\usepackage[T1]{fontenc}
\usepackage{lmodern}

\newtheorem{thm}{Theorem}[section]
\newtheorem{prop}[thm]{Proposition}
\newtheorem{lem}[thm]{Lemma}
\newtheorem{cor}[thm]{Corollary}

\theoremstyle{definition}
\newtheorem{Def}[thm]{Definition}

\newtheorem{rem}[thm]{Remark}
\newtheorem*{notation}{Notation}

\newcommand{\Ss}{\mathbb{S}}
\newcommand{\Zz}{\mathbb{Z}}
\newcommand{\Ee}{\mathbb{E}}
\newcommand{\Ff}{\mathbb{F}_p}
\newcommand{\THH}{\mathrm{THH}}
\newcommand{\Cc}{C_{I_{n+1}}}
\newcommand{\Cp}{C^{\prime}_{I_{n+1}}}

\makeatletter
\@namedef{subjclassname@2020}{2020 Mathematics Subject Classification}
\makeatother

\subjclass[2020]{55P43}
\keywords{Connes' operator, Hochschild homology, Loday's construction}

\numberwithin{equation}{section}

\title{Iterated Topological Hochschild Homology of $\Ff$ at odd primes}

\author[J. Zha]{Jiaxi Zha}
\address{Department of Mathematics, Nankai University, No.94 Weijin Road, Tianjin 300071, P. R. China}
\email{1093913699@qq.com}

\begin{document}

\begin{abstract}
We compute homotopy groups of $n$-fold iterated topological Hochschild homology of $\Ff$ for $n\geq1$ and odd primes. Identifying this as the Loday construction $T^n\otimes\Ff$, we construct a bar spectral sequence computing $\pi_*(T^n\otimes\Ff)$ and prove that this spectral sequence collapses at the $E^2$-page. Finally, we resolve the multiplicative extensions and determine the ring structure.
\end{abstract}

\maketitle

\section{Introduction}

Let $A$ be an $\Ee_n$ ring spectrum for $n\geq2$. The Dunn additivity implies that topological Hochschild homology (THH) is an $\Ee_{n-1}$ ring spectrum, since $\THH$ is a symmetric monoidal functor; see \cite[Theorem 5.1.2.2]{HA} and \cite[IV.2]{NS18}. Therefore, it is natural to consider its $n$-fold iterated THH
\[\underbrace{\THH\circ\THH\circ\cdots\circ\THH}_{n}(A).\]
The aim of this paper is to compute homotopy groups of $n$-fold iterated THH of $\Ff$ for odd primes. Indeed, the computation of homotopy groups of $\THH(\Ff)$ by B\"okstedt in \cite{Bok} is a starting point for computations in topological Hochschild homology. For every prime $p$, B\"okstedt proved that there is an isomorphism of rings
\[\pi_*(\THH(\Ff))\cong \Ff[\mu]\]
with $|\mu|=2$. This computation also provides the initial input for studying iterated THH of $\Ff$. Another approach to compute $\THH(\Ff)$, based on Thom spectra, was given by Blumberg--Cohen--Schlichtkrull \cite[Theorem 1.5]{BCS10}. They proved that there is an equivalence of spectra
\[\THH(\Ff)\simeq\Ff\otimes\Ss[\Omega S^3].\]
Recently, Bayindir-Moulinos refined this equivalence to an $\Ee_2$-equivalence and extended $\Ff$ to any perfect field of positive characteristic. This $\Ee_2$-equivalence provides an approach to compute $n$-fold iterated THH of $\Ff$ for $n\leq 3$ and any prime $p$. However, we adopt a different approach, as our aim is to compute homotopy groups of $n$-fold iterated THH of $\Ff$ for arbitrary $n$. 

For odd primes, the most extensive computations of the homotopy groups of iterated THH of $\Ff$ are, to our knowledge, due to Veen in \cite{Vee18}. He computed homotopy groups of $n$-fold iterated THH of $\Ff$ for $n\leq p$ when $p\geq5$ and for $n\leq2$ when $p=3$. We follow his spectral sequence and the Loday construction approach. More precisely, we use a different decomposition of $T^n$ from Veen's decomposition to construct a bar spectral sequence and prove that this bar spectral sequence collapses at the $E^2$-page. 

The main input for our spectral sequence is the homotopy groups of higher THH of $\Ff$, which can be defined as the Loday construction $S^n\otimes\Ff$; see \cite[\S2]{Vee18}. For $n\geq1$, define $\Ff$-algebras $B_n$ inductively by 
\[B_1=\Ff[\mu],\qquad B_{n+1}=\mathrm{Tor}^{B_n}(\Ff,\Ff)\]
with $|\mu|=2$. Dundas--Lindenstrauss--Richter proved that there is an isomorphism of $\Ff$-algebras
\[\pi_*(S^n\otimes\Ff)\cong B_n.\]
Now, we can state our main theorem.

\begin{thm}\label{thm:main}
	For $n\geq 1$ and $p$ an odd prime, there is an isomorphism of $\Ff$-algebras
	\[\pi_*(T^n\otimes\Ff)\cong\bigotimes_{\varnothing\neq U\subseteq I_n}B_{|U|},\]
	where $I_n=\{1,\cdots,n\}$.
\end{thm}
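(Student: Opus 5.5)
We argue by induction on $n$. For $n=1$ we have $T^1=S^1$, and $\pi_*(S^1\otimes\Ff)\cong B_1=\Ff[\mu]$ by B\"okstedt's theorem. For the inductive step we write $T^{n+1}=T^n\times S^1$ and use a decomposition adapted to the last circle factor. Write $S^1$ as the pushout $D^1\cup_{S^0}D^1$. Then $T^{n+1}$ is the homotopy pushout
\[
T^n\times D^1\;\cup_{T^n\times S^0}\;T^n\times D^1 .
\]
Since the Loday construction sends homotopy pushouts to relative tensor products, we obtain
\[
T^{n+1}\otimes\Ff\simeq (T^n\otimes\Ff)\otimes_{(T^n\sqcup T^n)\otimes\Ff}(T^n\otimes\Ff).
\]
This is not the most useful form. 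We instead collapse along the $n$-skeleton-free part: the quotient $T^{n+1}/(T^n\vee S^1)$, or more usefully the stable splitting of $T^n$ into a wedge of spheres $\bigvee_{U}S^{|U|}$ after one suspension. This suggests writing
\[
T^{n+1}\otimes\Ff\simeq \Ff\otimes_{\Sigma(?)}\cdots .
\]
Concretely, I would use the pushout
\[
T^{n+1}= (T^n\times S^1)\;\cong\; T^n\times D^1\cup_{T^n\times S^0}\ldots
\]
to get a bar spectral sequence
\[
E^2=\mathrm{Tor}^{\pi_*(T^n\otimes\Ff)\otimes\pi_*(T^n\otimes\Ff)}\bigl(\pi_*(T^n\otimes\Ff),\pi_*(T^n\otimes\Ff)\bigr),
\]
which is Hochschild homology $HH_*(\pi_*(T^n\otimes\Ff))$ over $\Ff$, converging to $\pi_*(T^{n+1}\otimes\Ff)$.

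Next we compute the $E^2$-page. By induction $\pi_*(T^n\otimes\Ff)$ is the tensor product $\bigotimes_{U}B_{|U|}$, and Hochschild homology is symmetric monoidal, so
\[
HH_*\Bigl(\bigotimes_U B_{|U|}\Bigr)\cong\bigotimes_U HH_*(B_{|U|}).
\]
Each $B_k$ is a tensor product of polynomial, truncated polynomial $\Ff[x]/x^p$, exterior, and divided-power algebras. For such a commutative algebra $B$ that is smooth-like (free graded commutative or divided power) we have
\[
HH_*(B)\cong B\otimes\mathrm{Tor}^{B}(\Ff,\Ff)=B\otimes B_{k+1}.
\]
Here the identification of $\mathrm{Tor}^{B_k}(\Ff,\Ff)$ with $B_{k+1}$ is the definition of $B_{k+1}$. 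For truncated polynomial factors the Hochschild homology is not simply $B\otimes\mathrm{Tor}$ additively, and the truncated-polynomial factors are handled by checking this case. After taking the product over $U$, the $E^2$-page is
\[
\bigotimes_{U\subseteq I_n}B_{|U|}\otimes B_{|U|+1}.
\]
Reindexing, with $U$ corresponding to subsets of $I_{n+1}$ not containing $n+1$ and $U\cup\{n+1\}$ corresponding to those containing it (with $\{n+1\}$ alone contributing $B_1$ from the empty $U$, whose $HH$ is $\Ff\otimes\mathrm{Tor}$ of the unit factor), this has exactly the size of the claimed answer.

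The main obstacle is collapse at $E^2$ together with multiplicative extensions. For collapse, the $E^2$-page is generated as an algebra by filtration $0$ and $1$ classes when the algebra is tensor-free, and such classes cannot support differentials. Divided power generators $\gamma_{p^i}$ in higher filtration are the problem. To deal with them I would compare with the sub-Loday constructions: the projections $T^{n+1}\to T^{|U|+1}$ and the maps $S^{|U|+1}\to T^{|U|+1}$ (pinch/fundamental-class collapse) give maps from the spectral sequence for $S^{k}\otimes\Ff$, which has no room for differentials by Dundas--Lindenstrauss--Richter. Every generator is in the image of such a map, so naturality forces all differentials to vanish, and $\Ff$-algebra retractions $T^{n+1}\to S^{|U|}$ onto each factor split the answer. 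For the multiplicative extensions, the same retractions and inclusions show that each factor $B_{|U|}$ maps isomorphically onto its image as an algebra. The tensor product then follows by commutativity of $\pi_*$ at odd $p$ and a counting argument.
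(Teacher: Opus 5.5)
Your outline (induction, a bar spectral sequence with a Hochschild-type $E^2$-term, collapse, then extensions) matches the paper's. However, the spectral sequence you set up converges to the wrong thing, and the collapse and extension steps each lack the key idea.

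\textbf{The $E^2$-term.} The pushout over $T^n\times S^0=T^n\sqcup T^n$ does exhibit $T^{n+1}\otimes\Ff$ as $\THH(T^n\otimes\Ff)$. But this is a relative tensor product over $(T^n\otimes\Ff)\wedge(T^n\otimes\Ff)$, a smash product over $\Ss$. Its homotopy is $A_n\otimes\mathcal{A}_*\otimes A_n$, where $A_n=\pi_*(T^n\otimes\Ff)$ and $\mathcal{A}_*$ is the dual Steenrod algebra, not $A_n\otimes A_n$. A K\"unneth spectral sequence with base $A_n\otimes A_n$ computes $\mathrm{HH}(T^n\otimes\Ff/\Ff)\simeq(T^{n+1}\otimes\Ff)\otimes_{\THH(\Ff)}\Ff$ instead, and that is too small: the factor $B_{\{n+1\}}=\Ff[\mu_{n+1}]$ is missing. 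Your bookkeeping that the empty $U$ supplies $B_1$ does not hold. Indeed $\mathrm{HH}_*(B_0)=\mathrm{Tor}^{\Ff}(\Ff,\Ff)=\Ff$, whereas $B_1=\Ff[\mu]$ is $\pi_*\THH(\Ff)$.

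The paper instead uses the pointed pushout $T^n\vee S^1\leftarrow T^n\vee T^n\xrightarrow{\nabla}T^n$. There the base has homotopy $A_n\otimes A_n$ and $\pi_*((T^n\vee S^1)\otimes\Ff)\cong A_n\otimes\Ff[\mu]$. Even then it needs Lemma~\ref{lem:l=r}: the two module structures differ by conjugation by the loop, which is invisible on homotopy. Only then does it obtain $E^2\cong\Ff[\mu]\otimes\mathrm{HH}_*(A_n)$. Your worry about truncated polynomial factors is unfounded: since $(x\otimes 1+1\otimes x)^p=0$, the shearing map works (Lemma~\ref{lem:HH(A)}).

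\textbf{Collapse.} This is the decisive gap. For $k\geq2$ every map $S^k\to T^k$ lifts to $\mathbb{R}^k$ and is therefore null, so no useful map of spectral sequences runs from a sphere's bar spectral sequence into the torus one. The collapse $q_{n+1}\colon T^{n+1}\to S^{n+1}$ goes the other way. Naturality along it only shows $d^r x\in\ker q_{n+1}$ for a generator $x$ of the top factor $B_{I_{n+1}}$; it does not give $d^rx=0$.

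Classes $x_{V,v}$ with $V\subsetneq I_{n+1}$ are indeed permanent cycles, via the inclusions $T^V\subset T^{n+1}$. But the generators of $B_{I_{n+1}}$ of filtration $p^j$ and $2p^j$ are in the image of nothing known to collapse. The candidate targets, decomposable classes built from proper subsets, are invisible to all the collapse and projection maps you propose.

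Ruling out $d^Rx=y\neq0$ is the main work of the paper:
\begin{enumerate}
\item Pinch maps induce comultiplications on the spectral sequence (Proposition~\ref{prop:pinch}). These make $y$ primitive, hence $y\in C'_{I_{n+1}}$ (Lemma~\ref{lem:pdesc}).
\item A Connes operator commuting with the differentials (Proposition~\ref{prop:connes}) forces $\delta^2y=0$.
\item Lemma~\ref{lem:part} then pins $y$ to filtration one with a specific leading monomial.
\item The suspension operators $\sigma_{n+1,i}$ produce a contradiction (Theorem~\ref{thm:collapse}).
\end{enumerate}

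\textbf{Multiplicative extensions.} These have the same defect. For $|U|\geq2$, $S^{|U|}$ is not a retract of $T^{n+1}$, and $B_{I_{n+1}}$ is only a quotient of $A_{n+1}$ via $q_{n+1}$. Graded commutativity handles odd generators. For a lift $\widetilde v$ of an even generator $\gamma_{p^j}(w)$, however, nothing you say excludes $\widetilde v^{\,p}\neq0$ in lower filtration, and a counting argument cannot detect this.

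The paper shows $\widetilde v^{\,p}\in C'_{I_{n+1}}$ using primitivity together with $q_{n+1}(\widetilde v^{\,p})=0$. It then shows $\widetilde v^{\,p}\in P^p_{I_{n+1}}$, because the suspension derivations kill it (Lemma~\ref{lem:ker}). Finally it replaces $\widetilde v$ by $\widetilde v-f_v$ with $f_v^p=\widetilde v^{\,p}$.
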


\subsection*{Organization} In section \ref{sec2}, we recall basic properties of the Loday construction and the definition of $B_n$ from \cite{Vee18}. We then construction an explicit isomorphism $B_n\cong\pi_*(S^n\otimes\Ff)$, whose existence is established in \cite{DLR18}. In Section \ref{sec3}, we construct a spectral sequence converging to $\pi_*(T^n\otimes\Ff)$ and define several operations on it. Finally, we prove that this spectral sequence collapses at the $E^2$-page and resolve its multiplicative extensions in Section \ref{sec4} and Section \ref{sec5}, respectively.

\subsection*{Conventions}
We work with $\infty$-categories throughout, following \cite{HTT,HA}. Therefore, all categories are understood to be $\infty$-categories unless otherwise specified. Moreover,
\begin{enumerate}
\item We write $\mathcal{S}$ for the category of spaces, and $\mathrm{Sp}$ for the category of spectra. We also write $\mathrm{Map}_{\mathcal{C}}(-,-)$ for mapping spaces in a category $\mathcal{C}$. 
\item We write $\Ff$ for both ordinal ring and the Eilenberg-MacLane spectrum $\mathrm{H}\Ff$.
\item We use $\otimes$ to denote both the smash product of spectra and the derived tensor product in $D(\Ff)$. The intended meaning will be clear from the context.
\item For a prime $p$, we write $P(x)$, $\Lambda(x)$ and $\Gamma(x)$ for the polynomial, exterior, and divided power algebras on the classes $x$ over $\mathbb F_p$, respectively. Moreover, we write $P_p(x)$ for $P(x)/(x^p)$. By an algebra generator, we mean $x$ for $P(x)$, $P_p(x)$ and $\Lambda(x)$, and $\gamma_{p^i}(x)$ for $\Gamma(x)$ with $i\geq0$.
\item We write $P_{I_{n+1}}$ for $\Ff[\mu_i\ |\ i\in I_{n+1}]$.
\item We extend integer indexed notation to finite sets. In this case, an object indexed by a finite set $W$ is understood to be the corresponding object for $|W|$.
\end{enumerate}

\section{Recollection on tha Loday construction and $\pi_*(S^n\otimes \Ff)$}\label{sec2}

In this section, we recall some basic properties of the Loday construction, Veen's admissible words in \cite{Vee18} and the computation of homotopy groups of $S^n\otimes\Ff$ given by Dundas--Lindenstrauss--Richter in \cite{DLR18}. We begin by the Loday construction. 

\begin{Def}[{\cite[Definition 2.6]{RSV22}}]
	Let $X$ be a space and $A$ be an $\Ee_{\infty}$ ring spectrum. The Loday construction $X\otimes A$ is an $\Ee_{\infty}$ ring spectrum given by the colimit
	\[X\otimes A:=\mathrm{colim}\left(X\xrightarrow{A}\mathrm{CAlg}(\mathrm{Sp})\right),\]
	where $A$ is the constant functor taking the value $A$.
\end{Def}

Indeed, this defines a functor $\mathcal{S}\times\mathrm{CAlg}(\mathrm{Sp})\to \mathrm{CAlg}(\mathrm{Sp})$, by \cite[Corollary 4.4.4.9]{HTT} and \cite[Corollary 3.2.3.5]{HA}.

\begin{prop}\label{prop:push}
	Let $A$ be an $\Ee_{\infty}$ ring spectrum. The functor $-\otimes A\colon\mathcal{S}\to\mathrm{CAlg}(\mathrm{Sp})$ preserves colimits.
\end{prop}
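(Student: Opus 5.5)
The plan is to realize $-\otimes A$ as a left adjoint, since left adjoints preserve all colimits. The candidate right adjoint is the functor
\[\mathrm{Map}_{\mathrm{CAlg}(\mathrm{Sp})}(A,-)\colon \mathrm{CAlg}(\mathrm{Sp})\to\mathcal{S}.\]
So the main step is to produce a natural equivalence
\[\mathrm{Map}_{\mathrm{CAlg}(\mathrm{Sp})}(X\otimes A, B)\simeq \mathrm{Map}_{\mathcal{S}}\bigl(X,\mathrm{Map}_{\mathrm{CAlg}(\mathrm{Sp})}(A,B)\bigr),\]
natural in both $X\in\mathcal{S}$ and $B\in\mathrm{CAlg}(\mathrm{Sp})$.

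I would argue in three steps.

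\textbf{Step 1.} By definition, $X\otimes A$ is the colimit of the constant diagram $X\to\mathrm{CAlg}(\mathrm{Sp})$ with value $A$. This colimit exists because $\mathrm{CAlg}(\mathrm{Sp})$ is presentable, as cited after the definition. The universal property of colimits then identifies the left-hand side with
\[\lim_{x\in X}\mathrm{Map}_{\mathrm{CAlg}(\mathrm{Sp})}(A,B),\]
the limit over $X$ of a constant diagram.

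\textbf{Step 2.} Since $X$ is an $\infty$-groupoid, the limit over $X$ of the constant diagram with value a space $M$ is the space of sections of the trivial fibration $X\times M\to X$. That space is $\mathrm{Map}_{\mathcal{S}}(X,M)$; equivalently, $\mathcal{S}$ is cotensored over itself by $M^X$. Taking $M=\mathrm{Map}_{\mathrm{CAlg}(\mathrm{Sp})}(A,B)$ gives the displayed equivalence pointwise.

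\textbf{Step 3.} It remains to check that the equivalence is natural in $X$ and $B$ simultaneously. This is the only delicate point. I would handle it by recognizing $-\otimes A$ as the tensoring of the presentable $\infty$-category $\mathrm{CAlg}(\mathrm{Sp})$ over $\mathcal{S}$, which is the colimit of a constant diagram indexed by $X$. The cited results provide this: \cite[Corollary 4.4.4.9]{HTT} identifies colimits of constant $X$-indexed diagrams with the tensoring $X\otimes A$ and establishes its adjunction with the cotensoring/mapping-space functor.

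Alternatively, one can avoid adjunctions for the parts that matter later. Colimits in $\mathcal{S}$ are generated by pushouts, coproducts and filtered colimits, and for a diagram $i\mapsto X_i$ there is a formula for a left Kan extension along $X\to *$ written as an iterated colimit. Commuting colimits with colimits gives
\[(\mathrm{colim}_i X_i)\otimes A\simeq\mathrm{colim}_i (X_i\otimes A),\]
because a colimit over $\mathrm{colim}_i X_i$ (viewed as the $\infty$-groupoid obtained by unstraightening) of a constant functor decomposes as a colimit over $i$ of colimits over the $X_i$. This is a direct application of the decomposition of colimits over a Grothendieck construction, \cite[Corollary 4.2.3.10]{HTT}.

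The main obstacle is this coherence/naturality bookkeeping; the pointwise computation is immediate. In the write-up I would rely on the tensoring adjunction from \cite[Corollary 4.4.4.9]{HTT} rather than build the naturality by hand. The consequence used later in the paper is that the pushout decomposition of $T^n$ is carried to a pushout, that is, a relative tensor product, of $\Ee_\infty$ rings.
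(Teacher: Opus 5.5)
Your proposal is correct and is essentially the paper's argument. The paper just cites \cite[Example 2.5]{HY17} for the standard fact that, in a cocomplete $\infty$-category such as $\mathrm{CAlg}(\mathrm{Sp})$, the tensoring $X\mapsto \mathrm{colim}_X A$ is left adjoint to $\mathrm{Map}_{\mathrm{CAlg}(\mathrm{Sp})}(A,-)$ and hence preserves colimits; your Steps 1--2 are the pointwise form of that adjunction, and your Step 3 defers the naturality to the same standard tensoring over $\mathcal{S}$ (\cite[Corollary 4.4.4.9]{HTT}).
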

\begin{proof}
	It follows immediately from \cite[Example 2.5]{HY17}.
\end{proof}
Therefore, given $X,Y\in\mathcal{S}$. We have $(X\times Y)\otimes A\simeq X\otimes(Y\otimes A)$, since $X\times Y\simeq \mathrm{colim}(X\xrightarrow{Y}\mathcal{S}),$ where $Y$ is the constant functor taking the value $Y$. The same proposition also implies that $S^1\otimes A\simeq A\otimes_{A\otimes A}A\simeq \THH(A)$. Hence, we deduce that $T^n\otimes A$ is equivalent to $n$-fold iterated topological Hochschild homology of $A$. We will use this equivalence to compute homotopy groups of iterated topological Hochschild homology of $\Ff$.

Next, we recall Veen's admissible words.
\begin{Def}[{\cite[Definition 3.1]{Vee18}}]
Given the letters $\mu$, $\varrho$, $\varrho^k$ and $\varphi^k$ for $k \geq 0$, define an \emph{admissible word} to be a word such that:
\begin{enumerate}
    \item[(1)] it ends with the letter $\mu$;
    \item[(2)] if $\mu$ is preceded by a letter, it must be $\varrho$;
    \item[(3)] if $\varrho$ is preceded by a letter, it must be $\varrho^k$;
    \item[(4)] if $\varrho^k$ or $\varphi^k$ is preceded by a letter,
    it must be $\varrho$ or $\varphi^l$ for some $l \geq 0$.
\end{enumerate}
An admissible word is called monic, if it begins with one of the letters $\varrho$, $\varrho^0$, $\varphi^0$ or $\mu$.
\end{Def}

Set the degree of $\mu$ is defined to be $2$. We inductively define the degree of an admissible word by the formulas
\[|\varrho x|=1+|x|, \qquad |\varrho^k x|=p^k(1+|x|), \qquad |\varphi^k x|=p^k(2+p|x |).\]
Moreover, let $B_0=\Ff$ and $B_1=\Ff[\mu]$, and 
\[B_n=\bigotimes_{w\in W_{n,\mathrm{odd}}}\Lambda(w)\otimes\bigotimes_{w\in W_{n,\mathrm{even}}}\Gamma(w),\]
for $n\geq2$, where 
\[W_{n,\mathrm{odd}}=\left\{w\ |\ w\  \text{admissible monic of length}\ n,\ |w|\  \text{is odd} \right\}\] 
and 
\[W_{n,\mathrm{even}}=\left\{w\ |\ w\  \text{admissible monic of length}\ n,\ |w|\  \text{is even} \right\}.\]
Using \cite[Proposition 2.2]{BAK+15} and above notations, there is an isomorphism
\[\mathrm{Tor}^{P_p(x)(\Ff,\Ff)}\cong \Lambda(\varrho x)\otimes \Gamma(\varphi^0x),\] 
when the degree of $x$ is even. Here $|\varrho x|=1+|x|$ and $|\varphi^0(x)|=2+p|x|$. Since $\Gamma(w)$ is isomorphic to $\bigotimes_{i\geq 0} P_p(\gamma_{p^i}(w))$, we deduce that
\begin{equation*}
	\mathrm{Tor}^{B_n}(\Ff,\Ff)\cong \bigotimes_{w\in W_{n,\mathrm{odd}}}\Gamma(\varrho^0w)\otimes\bigotimes_{\substack{w\in W_{n,\mathrm{even},}\\ k\geq 0}}[\Lambda(\varrho\gamma_{p^k}(w))\otimes\Gamma(\varphi^0\gamma_{p^k}(w))].
\end{equation*}
Note that $w\in W_{n,\text{even}}$ must be of the form $\varrho^0(v)$ or $\varphi^0(v)$. Hence $\gamma_{p^k}(w)$ is equal to $\varrho^k(v)$ or $\varphi^k(v)$. Therefore, we deduce that
\begin{equation}\label{eq:torbn}
		\mathrm{Tor}^{B_n}(\Ff,\Ff)\cong B_{n+1},
\end{equation} 
for $n\geq 2$.

We recall the following computation of Dundas--Lindenstrauss--Richter.
\begin{thm}[{\cite[Remark 3.9]{DLR18}}]\label{thm:sn and bn}
	There is an isomorphism of $\Ff$-algebras $\pi_*(S^n\otimes\Ff)\cong B_n$.
\end{thm}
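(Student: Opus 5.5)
Since the statement is attributed to \cite[Remark 3.9]{DLR18}, the plan is to reconstruct it by induction on $n$. The engine is the pushout $S^{n}\simeq D^{n}\cup_{S^{n-1}}D^{n}$ together with Proposition~\ref{prop:push}, which gives an equivalence of $\Ee_\infty$ ring spectra
\[S^{n}\otimes\Ff\simeq \Ff\otimes_{S^{n-1}\otimes\Ff}\Ff .\]
\emph{Step 1.} The Tor spectral sequence of this relative smash product has
\[E^2_{s,t}=\mathrm{Tor}^{\pi_*(S^{n-1}\otimes\Ff)}_{s,t}(\Ff,\Ff)\Longrightarrow \pi_{s+t}(S^n\otimes\Ff).\]
It is a spectral sequence of $\Ff$-algebras, since all objects are commutative $\Ff$-algebras. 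For $n=1$ the input is B\"okstedt's theorem $\pi_*(S^1\otimes\Ff)=\pi_*\THH(\Ff)\cong\Ff[\mu]=B_1$. For the inductive step, the $E^2$-term is $\mathrm{Tor}^{B_{n-1}}(\Ff,\Ff)$, which by \eqref{eq:torbn} is abstractly $B_n$ once $n-1\geq2$. The case $n=2$ is direct: $\mathrm{Tor}^{\Ff[\mu]}(\Ff,\Ff)=\Lambda(\varrho\mu)=B_2$, since the only monic admissible word of length $2$ is $\varrho\mu$.

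\emph{Step 2 (collapse).} The $E^2$-page is generated as an algebra by classes in filtration $s=1$ (the suspensions $\varrho x$ of indecomposables) and by divided powers $\gamma_{p^k}$ of these, including the $\varphi^0$-type classes coming from the $p$-truncations. Classes in filtration $1$ cannot support differentials. For the remaining generators I would use the $S^{n-1}$-action: pinching $S^{n-1}\to S^{n-1}\vee S^{n-1}$ and rotating gives $S^n\otimes\Ff$ the structure of a Hopf algebra in the homotopy category, and the spectral sequence becomes one of Hopf algebras. I then follow the argument of \cite{BAK+15}, where the first possible differential must land on a primitive element of lower filtration, and check that in the relevant degrees there are no such primitives. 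A concrete alternative is to compare with the Dyer--Lashof/Steenrod operations. The operation $\beta Q$ applied to an odd class realizes the $\varphi^0$ generators, which shows these classes are permanent cycles. This matches the $\varrho^k,\varphi^k$ notation in the definition of admissible words.

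\emph{Step 3 (multiplicative extensions).} Even after collapse, $E^\infty\cong B_n$ only as bigraded algebras. I need that odd generators square to zero, which is automatic by graded commutativity at odd $p$. I also need that $p$-th powers of the even generators $\gamma_{p^k}(w)$ vanish in $\pi_*$. The $p$-th power of an even class $x$ in a commutative $\Ff$-algebra equals the Dyer--Lashof operation $Q^{|x|/2}x$. For classes of positive filtration, I would show this is detected in a filtration where $E^\infty$ is zero in the given total degree, or use the fact that $Q^{|x|/2}$ of an $s$-fold suspension class vanishes because suspension kills products. This yields $\pi_*(S^n\otimes\Ff)\cong B_n$.

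The main obstacle is Step 2, the collapse of the spectral sequence for all $n$ and all odd $p$. Here I would first attempt the Hopf-algebra/primitive-degree counting argument, and fall back on the power-operation argument showing the $\varphi$-generators are permanent cycles.
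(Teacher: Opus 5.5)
The paper gives no proof of this statement. It is quoted from \cite{DLR18} and then used in the direction opposite to yours: Corollary~\ref{cor:BncongSn} deduces the collapse of \eqref{eq:Sn} from it by a dimension count, and reads off $a^p=0$ from $B_n$. Your plan therefore has to produce exactly these two facts for every $n$, and Step~2 does not.

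Your fallback only concerns the $\varphi^0$-classes, which need no argument. Everything in filtration $\le 2$ is automatically a permanent cycle, because $d^r$ with $r\ge 2$ lands in filtration $\le 0$ and $E^2_{0,*}=\Ff$ sits in degree $0$. The classes that can support differentials are the higher divided powers $\varrho^k w=\gamma_{p^k}(\varrho^0 w)$ and $\varphi^k v=\gamma_{p^k}(\varphi^0 v)$ with $k\ge 1$. The Hopf-algebra argument only tells you that a first differential on them hits an odd primitive $\varrho u$ in filtration $1$. The degree check you postpone does not go through for all $n$. For example, take $p=3$ and $n=10$. The generator $\varphi^1\varrho^0\varrho\varrho^1\varrho\varrho^1\varrho\varrho^1\varrho\mu$ (filtration $6$, degree $1212$) and the primitive $\varrho\varrho^0\varrho\varphi^0\varphi^1\varphi^0\varphi^0\varrho^0\varrho\mu$ (degree $1211$) are compatible with a nonzero $d^5$.

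Such differentials are governed by power operations. In B\"okstedt's spectral sequence, for instance, $d^{p-1}\gamma_p(\sigma\bar\tau_i)=\sigma\bar\xi_{i+1}$ with $\bar\xi_{i+1}=\beta Q^{p^i}\bar\tau_i$. So you would need to know that operations such as $\beta Q^{(|w|+1)/2}w$ on odd classes of $\pi_*(S^{n-1}\otimes\Ff)$ vanish modulo decomposables. An inductive hypothesis that is merely an algebra isomorphism $\pi_*(S^{n-1}\otimes\Ff)\cong B_{n-1}$ gives no such information.

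Step~3 has the same hole. For $x=\varrho^0 w=\sigma w$ with $|w|$ odd you get $x^p=Q^{|x|/2}\sigma w=\sigma\bigl(Q^{(|w|+1)/2}w\bigr)$. This is a genuine Dyer--Lashof operation on the odd class $w$, not a product, so ``suspension kills products'' does not apply. Generators of filtration $\ge 2$ are not suspensions at all. Your filtration alternative is also unavailable, since $E^\infty\cong B_n$ is in general nonzero in lower filtrations of total degree $p|x|$.

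What is missing is an input controlling Dyer--Lashof operations on $\pi_*(S^{m}\otimes\Ff)$ for all $m$, carried along the induction. Without it, neither the collapse nor the absence of multiplicative extensions follows. The paper sidesteps both by taking the theorem from \cite{DLR18} as a black box.
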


For the later computations, we need to relate this computation to the bar spectral sequences and make an explicit choice of the isomorphism $f_n\colon B_n\to\pi_*(S^n\otimes\Ff)$ for $n\geq1$. When $n=1$, we choose $f_1(\mu)$ to be the B\"okstedt class.  For $n\geq2$, since $S^n$ is the pushout of the span $*\leftarrow S^{n-1}\to *$ and $-\otimes\Ff$ preserves pushouts, there is an equivalence
\[S^n\otimes\Ff\simeq \Ff\otimes_{S^{n-1}\otimes\Ff}\Ff,\]
where we identify $S^n$ as $S^1\wedge S^{n-1}$. Hence there is a bar spectral sequence
\begin{equation}\label{eq:Sn}
	E^2_{*,*}(S^n)\cong\mathrm{Tor}_{*,*}^{\pi_*(S^{n-1}\otimes\Ff)}(\Ff,\Ff)\Longrightarrow\pi_*(S^n\otimes\Ff).
\end{equation}

\begin{cor}\label{cor:BncongSn}
	For $n\geq2$, the spectral sequence \eqref{eq:Sn} collapses at $E^2$-page. Moreover, for $n\geq1$, we can inductively choose a class of isomorphisms of $\Ff$-algebras 
	\[\{f_n\colon B_n\to\pi_*(S^n\otimes\Ff)\}_{n\geq1},\]
	such that $f_{n-1}$ induces an isomorphism of $\Ff$-algebras
	\[B_n\cong\mathrm{Tor}^{B_{n-1}}_{*,*}(\Ff,\Ff)\xrightarrow{(f_{n-1})_*} \mathrm{Tor}^{\pi_*(S^{n-1}\otimes\Ff)}_{*,*}(\Ff,\Ff)\cong E^2_{*,*}(S^n)\cong E^{\infty}_{*,*}(S^n)\cong \mathrm{gr}\pi_*(S^n\otimes\Ff),\]
	which is the associated graded of $f_n$.
\end{cor}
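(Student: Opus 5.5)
The plan is an induction on $n$, proving collapse and building $f_n$ in the same step. For $n=1$ there is nothing to do: $f_1$ sends $\mu$ to the B\"okstedt class, and B\"okstedt's theorem gives an isomorphism $\Ff[\mu]\cong\pi_*(S^1\otimes\Ff)$. For $n\ge 2$, assume $f_{n-1}$ is an isomorphism of $\Ff$-algebras. Then it induces an isomorphism of bigraded $\Ff$-algebras
\[(f_{n-1})_*\colon \mathrm{Tor}^{B_{n-1}}_{*,*}(\Ff,\Ff)\xrightarrow{\cong}\mathrm{Tor}^{\pi_*(S^{n-1}\otimes\Ff)}_{*,*}(\Ff,\Ff)=E^2_{*,*}(S^n).\]
When $n-1\ge 2$ we identify the source with $B_n$ by \eqref{eq:torbn}. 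When $n=2$ we use $\mathrm{Tor}^{\Ff[\mu]}(\Ff,\Ff)=\Lambda(\varrho\mu)=B_2$.

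For collapse, I would not rely on a degree-counting argument, since $B_n$ has generators in many bidegrees. Instead I would invoke Theorem \ref{thm:sn and bn}, which says $\pi_*(S^n\otimes\Ff)\cong B_n$ abstractly. Each graded piece is finite-dimensional over $\Ff$, so we can compare Poincar\'e series. The $E^\infty$-page is a subquotient of $E^2\cong B_n$. Its total-degree Poincar\'e series must equal that of $\pi_*(S^n\otimes\Ff)$, which is the series of $B_n$. Any nonzero differential would strictly shrink some total-degree piece, because differentials change total degree by $-1$ and a nonzero one kills classes in two adjacent total degrees. Finite-dimensionality in each degree then forces all differentials to vanish, so the spectral sequence collapses at $E^2$. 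The bar spectral sequence is multiplicative, being a spectral sequence of commutative $\Ff$-algebras coming from the simplicial bar construction of $\Ee_\infty$-rings. Hence $E^\infty\cong\mathrm{gr}\,\pi_*(S^n\otimes\Ff)$ as algebras.

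Next I lift the isomorphism from the associated graded to $\pi_*(S^n\otimes\Ff)$ itself. Write $B_n$ as a tensor product of exterior algebras $\Lambda(w)$ and truncated polynomial pieces $P_p(\gamma_{p^k}(w))$. For each algebra generator, pick a lift in $\pi_*(S^n\otimes\Ff)$ of the corresponding class in $E^\infty$; these are exactly the generators $\varrho x$, $\varphi^0\gamma_{p^k}x$ and $\gamma_{p^i}$ of divided powers. Define $f_n$ on generators by these lifts. To extend $f_n$ to an algebra map, I must check that the lifts satisfy the defining relations in $\pi_*(S^n\otimes\Ff)$ itself, not only in $\mathrm{gr}$. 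Odd-degree lifts square to zero by graded commutativity, since $p$ is odd. The relation $y^p=0$ for even-degree lifts $y$ is the main obstacle. For it I would use that $\pi_*$ of an $\Ee_\infty$-$\Ff$-algebra carries Dyer--Lashof operations, so the $p$-th power is the operation $Q^{|y|/2}y$. Since $S^n\otimes\Ff\simeq\Ff\otimes_{S^{n-1}\otimes\Ff}\Ff$ and the lifts come from positive bar filtration, I would argue as in B\"okstedt/Bauer-type computations that the $p$-th power of a class of filtration $s\geq 1$ vanishes in $\pi_*(\Ff\otimes_R\Ff)$. An alternative is to note that the $p$-fold multiplication on an augmented bar construction $\Ff\otimes_R\Ff$ factors through a suspension, on which products vanish.

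Once the relations hold, $f_n$ is an algebra map whose associated graded is the composite isomorphism displayed in the statement. A map of filtered objects, exhaustive and finite in each degree, that induces an isomorphism on associated gradeds is itself an isomorphism. This completes the inductive step.
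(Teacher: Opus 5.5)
\textbf{Gap: the relation $y^p=0$ for the lifts.} Your collapse argument and your closing filtered-isomorphism argument match the paper's. The collapse argument compares total-degree dimensions of $E^2\cong B_n$ with those of $\pi_*(S^n\otimes\Ff)$ via Theorem~\ref{thm:sn and bn}.

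The gap is the step you flag as the main obstacle: showing $y^p=0$ in $\pi_*(S^n\otimes\Ff)$ for lifts $y$ of even-degree generators. Both mechanisms you propose are false as stated.

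\emph{Counterexample.} The case $n=1$ of the very same pushout refutes them. We have $S^1\otimes\Ff\simeq\Ff\otimes_{S^0\otimes\Ff}\Ff=\Ff\otimes_{\Ff\otimes\Ff}\Ff$, which is an augmented bar construction of exactly your shape. Since $E^2_{0,*}=\Ff\otimes_{\pi_*(\Ff\otimes\Ff)}\Ff=\Ff$, the B\"okstedt class $\mu$ must have positive bar filtration. Yet $\mu^p\neq 0$ in $\pi_*(S^1\otimes\Ff)=\Ff[\mu]$.

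\emph{The ``suspension'' alternative.} $\Ff\otimes_R\Ff$ is the suspension of the augmented $\Ee_\infty$-$\Ff$-algebra $R$. That gives it a comultiplication (the Hopf structure used later in Proposition~\ref{prop:hopfbn}), not vanishing products.

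\emph{What the spectral sequence actually gives.} It only shows that $y^p$ vanishes in $E^\infty$, because $\mathrm{Tor}$ has divided powers. So $y^p$ merely drops filtration, and you are left with a genuine multiplicative extension problem. A Dyer--Lashof attack would need substantially more input; for instance, the classes $\gamma_{p^k}(w)$ are not suspensions.

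\textbf{The fix (the paper's argument).} Use Theorem~\ref{thm:sn and bn} as an isomorphism of $\Ff$-algebras, not merely of graded vector spaces. In $B_n$ with $n\geq 2$, every positive-degree element $a$ satisfies $a^p=0$:
\begin{enumerate}
\item Odd-degree elements square to zero, since $p$ is odd.
\item Even-degree elements are central, so the Frobenius is additive on them.
\item Every positive even-degree monomial has vanishing $p$-th power. After reordering, its $p$-th power is a product of $p$-th powers of generators. Each odd generator squares to zero, and each $\gamma_{p^k}(w)$ satisfies $\gamma_{p^k}(w)^p=0$.
\end{enumerate}
Transporting this along the abstract algebra isomorphism gives $y^p=0$ for every $y\in\pi_*(S^n\otimes\Ff)_{>0}$, in particular for your chosen lifts. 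Hence $f_n$ is a well-defined algebra map, and the rest of your argument goes through unchanged.
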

\begin{proof}
	We prove this by an induction on $n$. For $n=1$, we have chosen $f_1(\mu)$ to be the B\"okstedt class. For $n\geq2$, suppose that $f_{n-1}$ has been chosen. Then there is an isomorphism $E^2_{*,*}(S^n)\cong B_n$, which implies that
	\[\sum_{s+t=k}\mathrm{dim}_{\Ff}E^2_{s,t}(S^n)=\mathrm{dim}_{\Ff}(B_n)_k=\mathrm{dim}_{\Ff}\pi_k(S^n\otimes\Ff)<\infty,\]
	by Theorem \ref{thm:sn and bn}. Therefore, the spectral sequence collapses at $E^2$-page and induces an isomorphism of $\Ff$-algebras $\bar{f}_n\colon B_n\to\mathrm{gr}\pi_*(S^n\otimes\Ff)$. It remains to lift this to a map of $\Ff$-algebras. Since $p$ is odd, square of any element of odd degree is $0$. Moreover, for $a\in(\pi_*(S^n\otimes\Ff))_{>0},$ $a^p=0,$ which can be checked in $B_n$ by Theorem \ref{thm:sn and bn}.  Now, for any generator $w\in (B_n)_k$, choose an element $x_w\in\pi_k(S^n\otimes\Ff)$ which represents $\bar{f}_n(w)$. Define $f_n$ by sending $w$ to $x_w$. Since $\mathrm{gr}f$ is an isomorphism and $\mathrm{dim}_{\Ff}E^2_{s,t}(S^n)$ is finite for any $(s,t)$, $f_n$ is an isomorphism of $\Ff$-algebras.
\end{proof}

\begin{notation}
	From now on, we do not distinguish between $w\in B_n$ and $f_n(w)\in\pi_*(S^n\otimes\Ff)$. In particular, we have that the filtration degrees of $\varrho x$, $\varrho^kx$ and $\varphi^kx$ are $1$, $p^k$ and $2p^k$, respectively. 
\end{notation}

Moreover, $B_n$ admits a Hopf algebra structure whose comultiplication is induced by 
\[
\left\{
\begin{alignedat}{2}
\Delta_{B_n}(w)
  &= w\otimes 1 + 1\otimes w,
  &\qquad& w\in W_{n,\mathrm{odd}}, \\[6pt]
\Delta_{B_n}\bigl(\gamma_k(w)\bigr)
  &= \sum_{i+j=k}\gamma_i(w)\otimes\gamma_j(w),
  && w\in W_{n,\mathrm{even}}.
\end{alignedat}
\right.
\]
 On the other hand, the identification $S^n\simeq S^1\wedge S^{n-1}$ and the pinch map of $S^1$ induce a Hopf algebra structure on $\pi_*(S^n\otimes\Ff)$ whose comultiplication is denoted by $\Delta_{S^n}$. For $n\geq 1$, we can further choose $f_n$ to be compatible with respect to thees Hopf algebra structures. To see this, let $V_n=\bigoplus_{w\in W_n}\Ff\{w\}$, where $W_n$ is the union of $W_{n,\mathrm{odd}}$ and $W_{n,\mathrm{even}}$. Recall that the tensor algebra $T(V_n)=\bigoplus_{k\geq0}V_n^{\otimes k}$ admits a Hopf algebra structure whose multiplication is the shuffle product and comiltiplication is given by
 \[\Delta(v_1\otimes\cdots\otimes v_n)=\sum_{i=0}^n(v_1\otimes\cdots\otimes v_i)\otimes(v_{i+1}\otimes\cdots\otimes v_n).\]
It admits a Hopf subalgebra $C(V_n)=\bigoplus_{k\geq0}(V_n^{\otimes k})^{\Sigma_k}$. Note that, as a coalgebra, $C(V_n)$ is the cofree conilpotent graded cocommutative coalgebra cogenerated by $V_n$. Moreover, we can equip $C(V_n)$ with a multiplicative filtration such that the filtration degree of $x\in W_n$ is precisely the filtration degree of the same named element in $B_n$.

\begin{lem}
	For $n\geq2$, there is an isomorphism of filtered Hopf algebras $g_n\colon B_n\xrightarrow{\cong} C(V_n)$.
\end{lem}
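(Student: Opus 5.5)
We construct $g_n$ by hand on generators and then compare both sides as tensor products of elementary Hopf algebras. Since $B_n$ is the tensor product of the Hopf algebras $\Lambda(w)$ for $w\in W_{n,\mathrm{odd}}$ and $\Gamma(w)$ for $w\in W_{n,\mathrm{even}}$, and each is primitively generated in the divided power sense, $g_n$ should be the tensor product of the maps
\[
\Lambda(w)\to C(\Ff\{w\}),\quad w\mapsto w,\qquad \Gamma(w)\to C(\Ff\{w\}),\quad \gamma_k(w)\mapsto w^{\otimes k}.
\]
Here $C(\Ff\{w\})\subseteq T(\Ff\{w\})$ is the symmetric tensors on the one-dimensional space spanned by $w$.

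\textbf{Step 1: the building blocks.} First we treat a single generator. For $|w|$ even, $(\Ff\{w\}^{\otimes k})^{\Sigma_k}$ is spanned by $w^{\otimes k}$. The shuffle product gives $w^{\otimes i}\cdot w^{\otimes j}=\binom{i+j}{i}w^{\otimes(i+j)}$, and the deconcatenation coproduct gives $\Delta(w^{\otimes k})=\sum_{i+j=k} w^{\otimes i}\otimes w^{\otimes j}$. These are exactly the relations of $\Gamma(w)$, so $\gamma_k(w)\mapsto w^{\otimes k}$ is an isomorphism of Hopf algebras. For $|w|$ odd, the Koszul sign in the $\Sigma_k$-action means the symmetric invariants of $w^{\otimes k}$ vanish for $k\ge 2$ (using that $p$ is odd, so $2$ is invertible). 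Thus $C(\Ff\{w\})=\Lambda(w)$, with $w$ primitive.

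\textbf{Step 2: tensor products.} Next we pass to all of $V_n$. Since $C(-)$ is the cofree conilpotent cocommutative coalgebra functor, it takes direct sums to tensor products: $C(V\oplus V')\cong C(V)\otimes C(V')$. This map is induced by the shuffle product of the two inclusions, so it is compatible with the shuffle multiplication and is a Hopf algebra isomorphism. The paper notes that $C(V_n)$ is a Hopf subalgebra, so this is available. Because $V_n$ is degreewise finite (the $B_n$ are of finite type), we can pass to the colimit over finite subsets of $W_n$ and get
\[
C(V_n)\cong\bigotimes_{w\in W_n}C(\Ff\{w\}).
\]
Combined with Step 1, this identifies $B_n\cong C(V_n)$ as Hopf algebras.

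\textbf{Step 3: filtrations.} Finally we check that $g_n$ is an isomorphism of filtered objects. The filtration on $C(V_n)$ is the multiplicative one in which each $w\in W_n$ has the filtration degree of $w$ in $B_n$. In $B_n$, $\gamma_k(w)$ has filtration $k\cdot\mathrm{fil}(w)$; for example $\gamma_{p^k}(\varrho^0 v)=\varrho^k v$ has filtration $p^k$. In $C(V_n)$, the element $w^{\otimes k}$ equals $w^k/k!$ only when $k<p$, so for $k\ge p$ we cannot obtain its filtration from multiplicativity alone. We therefore interpret the filtration on $C(V_n)$ as the one induced by tensor weight, in which $w_1\otimes\cdots\otimes w_k$ has degree $\sum\mathrm{fil}(w_i)$. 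Since the shuffle product respects this weight, it is multiplicative, and $g_n$ and $g_n^{-1}$ visibly preserve it because they match monomials to monomials with equal weight.

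We expect Step 3 to be the main obstacle: making the filtration convention on $C(V_n)$ precise enough that the divided powers $\gamma_{p^i}$, which are not products of lower-filtration elements, land in the correct filtration. Steps 1 and 2 are routine.
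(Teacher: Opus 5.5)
Your proof is correct and is essentially the paper's argument made explicit: the paper only cites Example 3.8 and Lemma 3.9 of its reference for the fact that, at odd primes, $C(V)$ splits as the tensor product of $\Lambda(w)$ over odd $w$ and $\Gamma(w)$ over even $w$, and your Steps 1 and 2 prove exactly this by hand (symmetric invariants, shuffle products, and $C(V\oplus V')\cong C(V)\otimes C(V')$). Your reading of the filtration on $C(V_n)$ as tensor length weighted by $\mathrm{fil}$ is the one the paper actually needs, since it is what makes $H$ filtered in the proof of Proposition~\ref{prop:hopfbn}; with it, $g_n$ is in fact an isomorphism of bigraded Hopf algebras, so filtration-compatibility of both $g_n$ and $g_n^{-1}$ is automatic.
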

\begin{proof}
	Since $p$ is odd, this follows from Example 3.8 and Lemma 3.9 of \cite{BGT+18}.
\end{proof}

\begin{prop}\label{prop:hopfbn}
	For $n\geq1$, $f_n$ can be chosen to be an isomorphism of Hopf algebras. In particular, there is an equation
	\begin{equation}\label{eq:coprod of sn}
		\Delta_{S^n}\circ f_n=(f_n\otimes f_n)\circ\Delta_{B_n}.
	\end{equation}
\end{prop}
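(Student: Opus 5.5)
The plan is to exploit the Lemma just proved. It identifies $B_n$ with the cofree conilpotent cocommutative coalgebra $C(V_n)$ as filtered Hopf algebras. The universal property of $C(V_n)$ then reduces constructing a Hopf algebra map into $B_n$ to choosing a linear map to the cogenerators $V_n$. The case $n=1$ is direct. $\pi_*(S^1\otimes\Ff)=\Ff[\mu]$, and $\Delta_{S^1}(\mu)=\mu\otimes1+1\otimes\mu$ because $\mu$ is in degree $2$ and is the lowest positive-degree class, so it is primitive. Multiplicativity of $\Delta_{S^1}$ then forces $\Delta_{S^1}(\mu^k)=\sum\binom{k}{i}\mu^i\otimes\mu^{k-i}$, which agrees with $\Delta_{B_1}$. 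So $f_1$ is already a Hopf isomorphism.

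For $n\geq2$, take the algebra isomorphism $f_n$ of Corollary \ref{cor:BncongSn} and set $H=\pi_*(S^n\otimes\Ff)$ with coproduct $\Delta_{S^n}$. This $H$ is a connected graded, cocommutative Hopf algebra of finite type. It is commutative because $S^n\otimes\Ff$ is $\Ee_\infty$, and cocommutative because $S^n$ is a co-H-space with homotopy-commutative pinch for $n\geq2$. Let $\pi\colon B_n\cong C(V_n)\to V_n$ be the projection to cogenerators, and let $q\colon H\to V_n$ be the composite $\pi\circ g_n\circ f_n^{-1}$. By cofreeness, $q$ induces a unique coalgebra map $\tilde f\colon H\to C(V_n)$. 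Because $H$ is commutative and $C(V_n)$ is cofree cocommutative, $\tilde f$ is also an algebra map: the two maps $H\otimes H\to C(V_n)$, product-then-$\tilde f$ and $\tilde f\otimes\tilde f$-then-product, are both coalgebra maps, and they agree after projection to $V_n$ up to a correction in decomposables. Where that agreement fails, I will modify $q$ degree by degree so that $q$ vanishes on decomposables $H_{>0}\cdot H_{>0}$.

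The key check is that $\tilde f$ is an isomorphism. It suffices to check this on associated gradeds for the filtration coming from the bar spectral sequence \eqref{eq:Sn}. The Hopf structure from the pinch map is compatible with that filtration, since the bar spectral sequence is one of Hopf algebras and its $E^2$-term carries the standard Tor coproduct. Under the identification $\mathrm{gr}H\cong B_n$ of Corollary \ref{cor:BncongSn}, that $E^2$ coproduct is exactly $\Delta_{B_n}$, by the formula for $\mathrm{Tor}^{P_p(x)}$ in \eqref{eq:torbn}. So $\mathrm{gr}\,\tilde f$ equals $\mathrm{gr}(g_n)$, which is an isomorphism, and finite type then implies $\tilde f$ itself is an isomorphism. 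The replacement $f_n':=\tilde f^{-1}\circ g_n$ is a Hopf isomorphism $B_n\to H$ with the same associated graded as $f_n$, so Corollary \ref{cor:BncongSn} still holds with $f'_n$ in place of $f_n$. Equation \eqref{eq:coprod of sn} is just the statement that $f'_n$ is a coalgebra map.

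The main obstacle is constructing $q$ so that it kills decomposables and restricts to the correct map on indecomposables. Equivalently, I must show $H$ is cofree on a complement of its decomposables. I would first try to prove that $H$ is primitively generated up to divided powers. The primitives of $B_n$ are spanned by the words $w\in W_n$ themselves, and $\gamma_{p^k}(w)$ is not primitive. So the complement should be spanned by lifts of the classes $w\in W_n$, and I would choose these lifts to be primitive in $H$. Primitivity of a lift can be checked by a degree and filtration argument: $\Delta_{S^n}(x_w)-x_w\otimes1-1\otimes x_w$ lies in strictly lower filtration. I would then kill it inductively on degree, using that $H\to\mathrm{gr}H$ is compatible with Milnor--Moore-type primitive/indecomposable exact sequences. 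Since $p$ is odd, the Verschiebung dual to the $p$-th power, which is zero here, is what I would use to make that step work.
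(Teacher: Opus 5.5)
Your skeleton is the paper's own: set $q=\pi\circ g_n\circ f_n^{-1}$, extend it cofreely to a coalgebra map $\tilde f\colon \pi_*(S^n\otimes\Ff)\to C(V_n)$, prove $\tilde f$ is multiplicative via the universal property, get $\mathrm{gr}\,\tilde f\circ\bar f_n=g_n$ by cofreeness, and take $\tilde f^{-1}\circ g_n$. The gap is the multiplicativity step. You call it the main obstacle and leave it open, but it holds exactly, with no modification of $q$.

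Here is why. Both $f_n^{-1}$ and $g_n$ are algebra maps. The shuffle product on $C(V_n)\subseteq T(V_n)$ sends $V_n^{\otimes a}\otimes V_n^{\otimes b}$ into $V_n^{\otimes(a+b)}$. Since $V_n$ lives in positive degrees, every positive-degree element of $C(V_n)$ has length $\geq 1$. So a product of two positive-degree elements has length $\geq 2$ and is killed by $\pi$. Hence $q(1)=0$ and $q$ vanishes on $\pi_*(S^n\otimes\Ff)_{>0}^2$, that is, $q\circ m=q\otimes\epsilon+\epsilon\otimes q$. Combined with $\pi\circ m_C=\pi\otimes\epsilon+\epsilon\otimes\pi$, this gives $\pi\circ\tilde f\circ m=\pi\circ m_C\circ(\tilde f\otimes\tilde f)$ on the nose. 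Cofreeness then yields $\tilde f\circ m=m_C\circ(\tilde f\otimes\tilde f)$, because both sides are coalgebra maps out of the cocommutative conilpotent coalgebra $H\otimes H$. This one observation is the missing idea.

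The workaround you sketch would not go through as written.
\begin{itemize}
\item The cogenerators $V_n$ are the primitives, spanned by $W_n$, and they are not a complement to the decomposables of $B_n$. For even $w$ and $k\geq 1$, $\gamma_{p^k}(w)$ is indecomposable, yet $g_n(\gamma_{p^k}(w))$ has length $p^k$, so the correct $q$ must vanish on it. Thus ``$H$ cofree on a complement of its decomposables'' is the wrong reformulation.
\item Producing primitive lifts of all $w\in W_n$ is essentially a restatement of the proposition. The Milnor--Moore/Verschiebung step meant to accomplish it is not supplied.
\item Any modification of $q$ that is not of strictly lower filtration would destroy the identity $\mathrm{gr}\,\tilde f\circ\bar f_n=g_n$, which your bijectivity argument relies on.
\end{itemize}

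Two smaller points:
\begin{itemize}
\item Before passing to associated gradeds, you should note that $\tilde f(x)=\sum_{i\geq 0}q^{\otimes(i+1)}\overline{\Delta}^{(i)}(x)$ is filtered, because $q$ and $\Delta_{S^n}$ are.
\item That $\bar f_n$ is a coalgebra map does not follow from the algebra isomorphism $\mathrm{Tor}^{B_{n-1}}(\Ff,\Ff)\cong B_n$ alone. It needs two further facts: the pinch-map coproduct on $E^2$ agrees with the Tor coproduct, and the latter is the primitive/divided-power coproduct on the chosen generators. The paper imports this from Veen (\S 3 and Proposition 8.4).
\end{itemize}
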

\begin{proof}
	Again, we prove this by an induction on $n$. For $n=1$, the B\"okstedt class is primitive for degree reasons. For $n\geq2$, suppose that $f_{n-1}$ has been constructed as an isomorphism of Hopf algebras. By the proof of Corollary \ref{cor:BncongSn}, there is an isomorphism of $\Ff$-algebras
	\[f^{\prime}_n:B_n\xrightarrow{\cong}\pi_*(S^n\otimes\Ff)\]
	whose associated graded is given by $\bar{f}_n=(f_{n-1})_*$. By \S3 and Proposition 8.4 of \cite{Vee18}, $\bar{f}_n$ is an isomorphism of Hopf algebras. 
	
	Consider the map $h=\pi\circ g_n\circ (f^{\prime}_n)^{-1}\colon\pi_*(S^n\otimes\Ff)\to V_n$, where $\pi\colon C(V_n)\to V_n$ is the canonical projection map. Since $f^{\prime}_n$ and $g_n$ are maps of algebras and products of elements of positive lengths have length at least two, we have that
	\begin{equation}\label{eq:h}
	    h(1)=0,\qquad h((\pi_*(S^n\otimes\Ff)_{>0})^2)=0,
	\end{equation}
	for $k>0$. Hence, as we noted before the lemma,  there is an unique map of coaugmented coalgebras $H:\pi_*(S^n\otimes\Ff)\to C(V_n)$ such that $\pi\circ H=h$. It is multiplicative by the same universal property. Indeed, if $m_{S^n}$ and $m_C$ are two multiplications, then
	\[\pi\circ H\circ m_{S^n}= h\otimes\epsilon+\epsilon\otimes h=\pi \circ m_C\circ (H\otimes H),\]
	where $\epsilon$ is the counit of $\pi_*(S^n\otimes\Ff)$. Here the first equation follows from Equation \eqref{eq:h}, and the second follows from the equation $\pi\circ m_{ C}=\pi\otimes\epsilon+\epsilon\otimes\pi$. Therefore, we deduce that $H\circ m_{S^n}=m_C\circ(H\otimes H)$. For $k>0$ and $x\in \pi_k(S^n\otimes\Ff)$, the cofree extension is given by
	\[H(x)=\sum_{i\geq0}h^{\otimes (i+1)}\circ\overline\Delta_{S^n}^{(i)}(x),\qquad H(1)=1,\]
	where $\overline\Delta_{S^n}^{(i)}:\pi_k(S^n\otimes\Ff)\to (\pi_*(S^n\otimes\Ff)^{\otimes {i+1}})_{>0}$ is the $i$-fold reduced comultiplication, with $\overline\Delta_{S^n}^{(0)}=id$. Note that the sum is finite in each total degree. Hence, since $h$ is filtered, so is $H$. Identify $\mathrm{gr} C(V_n)$ with $C(V_n)$ using its associated graded. We see that $\pi\circ\mathrm{gr} H\circ\bar{f}_n=\pi\circ g_n.$ Since both $\mathrm{gr} H\circ\bar{f}_n$ and $g_n$ are coalgebra maps, we deduce that $\mathrm{gr} H\circ\bar{f}_n=g_n$. Hence $\mathrm{gr} H$ is an isomorphism. Finiteness of the filtrations in each total degree implies that $H$ an isomorphism of Hopf algebras. Define  $f_n=H^{-1}\circ g_n$, then $\mathrm{gr}f_n=\bar f_n.$
\end{proof}

Finally, let $Q_*(S^n)$ and $P_*(S^n)$ be indecomposible and primitive elements of $\pi_*(S^n\otimes\Ff)_{>0}$, respectively. The proceeding proposition gives a map $P_*(f_{n+1})\colon P_{*+1}(B_{n+1})\to P_{*+1}(S^{n+1}).$ Since $S^{n+1}\otimes\Ff\simeq (S^n\wedge S^1)\otimes\Ff$, the $S^1$-action gives a map $\sigma_n\colon\pi_*(S^n\otimes\Ff)\to \pi_{*+1}(S^{n+1}\otimes\Ff)$. 
\begin{cor}\label{cor:QP}
	$\sigma_n$ restricts to a map $Q_*(S^n)\to P_{*+1}(S^{n+1})$ such that the following diagram is commutative
	\[\begin{tikzcd}[column sep=large, row sep=large]
	Q_*(B_n) \arrow[r, "\sigma_{B_n}"] \arrow[d, "Q_*(f_n)"'] & P_{*+1}(B_{n+1}) \arrow[d, "P_*(f_{n+1})"] \\
	Q_*(S^n) \arrow[r, "\sigma_n"'] & P_{*+1}(S^{n+1}),
	\end{tikzcd}\]
	where $\sigma_{B_n}$ maps $x\in Q_*B_n$ to $[x]\in P_{*+1}B_{n+1}.$
\end{cor}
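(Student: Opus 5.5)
Since $\sigma_n$ comes from the $S^1$-action, it behaves like a suspension. The plan is to show two things: that $\sigma_n$ kills decomposables and lands in primitives, and that it is the "bar suspension" $x\mapsto[x]$ on the $E^2$-page of the spectral sequence \eqref{eq:Sn} for $S^{n+1}$.

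\textbf{Step 1: $\sigma_n$ is a derivation.} The action $S^1_+\otimes(S^n\otimes\Ff)\to S^n\otimes\Ff$ is a map of $\Ee_\infty$ rings, since $-\otimes\Ff$ is functorial in spaces. The class $\sigma$ comes from the fundamental class of $S^1$, and that class is primitive in $H_*(S^1_+)$. Hence $\sigma_n(xy)=\sigma_n(x)y\pm x\sigma_n(y)$ in the $\pi_*(S^{n}\otimes\Ff)$-module $\pi_*(S^{n+1}\otimes\Ff)$, where $\pi_*(S^n\otimes\Ff)$ acts through the augmentation $S^n\otimes\Ff\to\Ff$ coming from the basepoint and the cone on $S^n$. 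Both $x$ and $y$ have positive degree and act by zero, so $\sigma_n$ vanishes on decomposables and factors through $Q_*(S^n)$.

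\textbf{Step 2: the image is primitive.} The suspension $\sigma_n$ is realized by a map $\Sigma(S^n\otimes\Ff)\to S^{n+1}\otimes\Ff$, induced by $S^1\wedge S^n\to S^{n+1}$, and the coproduct $\Delta_{S^{n+1}}$ is induced by the pinch on the $S^1$-factor. Since a suspension coordinate pinched against a point gives $x\otimes1+1\otimes x$, the image of $\sigma_n$ consists of primitives. An alternative argument uses filtration: $\sigma_n(x)$ lies in bar filtration $1$, and the reduced coproduct of a class of filtration $1$ lands in filtration $\le 1$ split between two positive factors, so it must be zero.

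\textbf{Step 3: commutativity of the square.} I would identify $\sigma_n$ with the edge map of the bar spectral sequence. The bar construction $\Ff\otimes_{S^n\otimes\Ff}\Ff$ has first filtration quotient $\Sigma\overline{S^n\otimes\Ff}$, and the inclusion of that quotient is exactly the suspension $\sigma_n$, a standard fact. So for $x\in Q_*(B_n)$, the class $\sigma_n(f_n(x))$ is detected in $E^\infty_{1,*}$ by $[f_n(x)]$. By Corollary \ref{cor:BncongSn} this class equals $\mathrm{gr}f_{n+1}([x])$. It remains to lift this equality from the associated graded to honest equality $\sigma_n f_n(x)=f_{n+1}([x])$, and this is the main obstacle.

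In filtration $1$ there is nothing below except filtration $0$, which is $\Ff$ in degree $0$. So $F_1\pi_*(S^{n+1}\otimes\Ff)$ in positive degree equals $E^\infty_{1,*}$ exactly, and any two elements of filtration $\le1$ with the same symbol coincide. Both $\sigma_n f_n(x)$ and $f_{n+1}([x])$ have filtration $\le1$: the first by the edge description, the second because $[x]$ is a generator $\varrho x$ or $\varrho^0x$ of filtration $1$ and $f_{n+1}$ is filtered. Hence they are equal. Finally, by Proposition \ref{prop:hopfbn}, $f_{n+1}$ preserves primitives, which identifies the right-hand vertical map $P_*(f_{n+1})$.
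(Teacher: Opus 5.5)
Your proposal is correct and follows the paper's proof: the paper likewise detects $\sigma_n(x)$ by the bar class $[x]$ in filtration one (it cites Veen's Proposition 8.5 for what you call a standard fact), gets primitivity from $\overline{\Delta}(F_1(S^{n+1})_{>0})=0$, and deduces commutativity because the two composites agree modulo $F_0(S^{n+1})=\Ff$, which vanishes in positive degrees. The only difference is your Step 1, where you kill decomposables by combining the derivation property with the fact that $S^n\otimes\Ff\to S^{n+1}\otimes\Ff$ factors through the augmentation, whereas the paper uses $[xy]=\pm d([x|y])$ to place $\sigma_n(xy)$ in $F_0(S^{n+1})$; both arguments work.
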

\begin{proof}
 	By \cite[Proposition 8.5]{Vee18}, $\sigma_n$ takes $x\in\pi_*(S^n\otimes\Ff)$ to the class $[x]\in E^2_{*,*}(S^{n+1})\cong\mathrm{gr}\pi_*(S^{n+1}\otimes\Ff).$ Since $[xy]=\pm d([x|y])$ for $x,y\in \pi_*(S^n\otimes\Ff)_{>0}$, $\sigma_n(xy)\in F_0(S^{n+1})\cong\Ff.$ Hence $\sigma_n(xy)=0$, as $|xy|+1> 0$. Therefore, $\sigma_n$ factors through $\pi_*(S^n\otimes\Ff)_{>0}/\pi_*(S^n\otimes\Ff)_{>0}^2=Q_*(S^n).$ On the other hand, we have that
 	\[\overline{\Delta}(F_1(S^{n+1})_{>0})\subseteq\sum_{i+j=1} (F_i(S^{n+1})_{>0})\otimes (F_j(S^{n+1})_{>0})=0,\]
 	where $\overline\Delta$ is the reduced comultiplication. Therefore, the image of $\sigma_n$ lies in $P_{*+1}(S^{n+1}).$ By the definition of $\sigma_{B_n}$ and the choice of $f_{n+1}$, we have that
 	\[(P_*(f_{n+1})\circ\sigma_{B_n}-\sigma_n\circ Q_*(f_n))(Q_*(B_n))\subseteq F_0(S^{n+1}).\]
 	Since degrees of elements of $Q_*(B_n)$ is positive and $F_0(S^{n+1})=\Ff$, we deduce that the diagram is commutative. 
\end{proof}

\begin{rem}\label{rem:phix}
 	Recall that for an algebra generator $x\in B_n$, we have that
 	\[\sigma_{B_n}(\overline{x})=[x]=
 	\begin{cases}
 	   \rho x, & |x|\text{ even},\\
 	   \varrho^0x, & |x|\text{ odd},
 	   \end{cases}\]
 	 where $\bar{x}$ is the image of $x$ in $Q_*(B_n)$. Hence, in particular, $\varphi^0(x)$ does not belong to the image of $\sigma_n$. 
\end{rem}

\section{The main spectral sequence for the computation}\label{sec3}
In this section, we construct the main spectral sequence used to compute $\pi_*(T^n\otimes\Ff)$ and analyze two operations on this spectral sequences induced by the pinch map and the suspension map, respectively. First, we construct the main spectral sequence $E^r_{*,*}(n+1)$ in Subsection \ref{sub3.1}. Next, in Subsection 3.2, we study comultiplications induced by $n+1$ pinch maps on $T^{n+1}$. Finally, in Subsection \ref{sub3.3}, we consider suspension maps on the spectral sequence.

\subsection{The construction of $E^r_{*,*}(n+1)$}\label{sub3.1}
Let $T=(T^n\cup [0,1]\cup T^n)/(x_0\sim 0,\ x_1\sim 1),$ where $x_0$ and $x_1$ are base points of two copies of $T^n$, respectively. Take $\bar{x}_0$ to be the base point of $T$. We have the following commutative diagram of pointed spaces
\begin{equation}\label{dig:main}
	\begin{tikzcd}[column sep=large, row sep=large]
	T^n\vee T^n \arrow[r, "\simeq"] \arrow[d, "\nabla"] & T \arrow[r] \arrow[d] & T^n\vee S^1 \arrow[d] \\
	T^n \arrow[r, "id"] & T^n \arrow[r] & T^n\times S^1,
	\end{tikzcd}
\end{equation}
where $\nabla$ is the fold map, and the upper right map restricts to the identity on each copy of $T^n$ and to the quotient map $I\to I/\partial I\cong S^1$ on $I$. Now, both left and right squares are pushout squares, so the outer square is also a pushout square. Therefore, there is an equivalence of $\Ff$-algebras
\[T^{n+1}\otimes\Ff\simeq ((T^n\vee S^1)\otimes\Ff)\otimes_{(T^n\vee T^n)\otimes\Ff}(T^n\otimes\Ff)\]
by Proposition \ref{prop:push}. Note that the $\Ee_{\infty}$-ring map induced by $T^n\vee T^n\xrightarrow{\nabla}T^n$ is just the multiplication of $T^n\otimes\Ff$. Therefore, the equivalence of $\Ee_{\infty}$-rings 
\[T^n\otimes\Ff\simeq |B_{\bullet}(T^n\otimes\Ff,T^n\otimes\Ff,T^n\otimes\Ff)|\]
implies that $T^{n+1}\otimes\Ff$ is equivalent to the geometric realization of
\[B_{\bullet}(n+1):=((T^n\vee S^1)\otimes\Ff)\otimes_{(T^n\vee T^n)\otimes\Ff}B_{\bullet}(T^n\otimes\Ff,T^n\otimes\Ff,T^n\otimes\Ff),\]
where the action of $(T^n\vee T^n)\otimes\Ff$ on $B_{\bullet}(T^n\otimes\Ff,T^n\otimes\Ff,T^n\otimes\Ff)$ is induced by the multiplication action of $T^n\otimes\Ff$ on the left and right $T^n\otimes\Ff$ factors. We denote the resulting spectral sequence by $E^r_{*,*}(n+1)$, which is strongly convergent by \cite[Theorem X.2.9]{EKMM}. 

Equivalently, $B_{\bullet}(n+1)$ can be obtained by applying the functor $-\otimes\Ff$ to the simlpicial spaces $(T^n\vee S^1)\cup_{T^n\vee T^n}B_{\bullet}(T^n,T^n,T^n)$. Collapsing $T^n\vee S^1$ and $(n-1)$-skeleton of each $T^n$ gives a map of simplicial spaces $(T^n\vee S^1)\cup_{T^n\vee T^n}B_{\bullet}(T^n,T^n,T^n)\to *\cup_{S^n\vee S^n}B_{\bullet}(S^n,S^n,S^n)$. Note that $(*\cup_{S^n\vee S^n}B_{\bullet}(S^n,S^n,S^n))\otimes\Ff$ is the bar constrcution $B_{\bullet}(\Ff,S^n\otimes\Ff,\Ff).$ Therefore, we obtain a map of spectral sequences
\begin{equation}
	E^r_{*,*}(n+1)\to E^r_{*,*}(S^{n+1}).
\end{equation}

\begin{notation}
	For convenience, we write $A_n$ for $\pi_*(T^n\otimes\Ff)$ and $M_n$ for $\pi_*((T^n\vee S^1)\otimes\Ff)$ in the rest of this paper. In particular, there is an isomorphism $\pi_*((T^n\vee T^n)\otimes\Ff)\cong A_n\otimes A_n$. Moreover, we denote $\bigotimes_{\varnothing\neq U\subseteq S}B_U$ by $C_S$.
\end{notation}

To understand the action of $\pi_*((T^n\vee T^n)\otimes\Ff)$ on $M_n$, we need to analyze the restrictions of the composite along the top row of the above commutative diagram to the first and second copies of $T^n$. Let $l$ and $r$ denote the maps obtained by applying $-\otimes\Ff$ to these two map, respectively. Note that $T^n\vee S^1\simeq B(\mathbb{Z}^n*\langle a\rangle),$ where $\mathbb{Z}^n$ and $\langle a\rangle$ correspond to fundamental groups of $T^n$ and $S^1$, respectively. Conjugation by $a$ gives a map $c\colon T^n\vee S^1\to T^n\vee S^1.$ By construction, we have that $r\simeq c\circ l$.

\begin{lem}\label{lem:l=r}
	As maps of $\Ff$-algebras, $l_*$ agrees with $r_*$. 
\end{lem}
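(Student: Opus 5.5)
We want $l_*=r_*\colon A_n\to M_n$. Since $r\simeq c\circ l$, the statement reduces to showing that the map $c\colon T^n\vee S^1\to T^n\vee S^1$, given by conjugation by the loop $a$, induces the identity on $M_n=\pi_*((T^n\vee S^1)\otimes\Ff)$ after composing with $l$. In fact we would aim to prove the stronger claim that $(c\otimes\Ff)_*=\mathrm{id}$ on all of $M_n$.

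\textbf{Step 1: compute $M_n$.} By Proposition \ref{prop:push}, the wedge $T^n\vee S^1$ is a pushout over a point, so
\[
(T^n\vee S^1)\otimes\Ff\simeq (T^n\otimes\Ff)\otimes_{\Ff}(S^1\otimes\Ff).
\]
Hence $M_n\cong A_n\otimes \Ff[\mu]$, where the second factor is $\pi_*\THH(\Ff)=B_1$. The inclusion of $T^n$ corresponds to $l_*$, which is therefore the inclusion $a\mapsto a\otimes 1$. It thus suffices to show that $(c\otimes\Ff)_*(a\otimes 1)=a\otimes 1$ for every $a\in A_n$.

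\textbf{Step 2: factor the conjugation through the torus.} The conjugation map $c$ restricted to $T^n$ is the composite
\[
T^n\to T^n\times S^1\to T^n\vee S^1\ \text{(up to homotopy)}.
\]
More precisely, conjugation $x\mapsto a x a^{-1}$ is the free-loop action: the restriction $c|_{T^n}$ is freely homotopic to the inclusion, via the homotopy that moves the basepoint around $a$. The difficulty is that this is only a free homotopy, not a pointed one. However, the Loday construction $X\otimes A$ for a space $X$ is functorial in unpointed spaces. The functor $-\otimes\Ff\colon \mathcal S\to \mathrm{CAlg}$ sends homotopic maps to equivalent maps, and homotopies in $\mathcal S$ are unpointed. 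Freely homotopic maps $T^n\to T^n\vee S^1$ are the same morphism in the $\infty$-category $\mathcal S$ up to a 2-cell, so they induce homotopic $\Ee_\infty$-maps, and in particular equal maps on homotopy groups.

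We therefore need to check that the two restrictions $l$ and $r$ (before applying $-\otimes\Ff$) are freely homotopic as maps $T^n\to T^n\vee S^1$. From diagram \eqref{dig:main}, the two copies of $T^n$ in $T$ are connected by the arc $[0,1]$. Sliding one copy of $T^n$ along this arc gives an explicit free homotopy between the two embeddings into $T$, and hence between their images in $T^n\vee S^1$. This homotopy traces out the loop $a$, which is exactly why $r\simeq c\circ l$ holds pointedly.

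\textbf{Step 3: conclude.} Applying $-\otimes\Ff$ to this free homotopy, and using that the Loday construction is defined on the unpointed $\infty$-category $\mathcal S$, gives $l_*=r_*$ as maps of $\Ff$-algebras.

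\textbf{Main obstacle.} The key subtle point is Step 3: justifying that only unpointed homotopy classes matter. The definition of $X\otimes A$ as a colimit over $X$ viewed as an $\infty$-groupoid depends only on the underlying object of $\mathcal S$, and morphisms in $\mathcal S$ are unpointed maps. I would stress this by writing $l$ and $r$ as morphisms in $\mathcal S$ and exhibiting the path between them in $\mathrm{Map}_{\mathcal S}(T^n,T^n\vee S^1)$. That path is given by the homotopy through the arc $I\subseteq T$.
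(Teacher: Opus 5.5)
Your proposal is correct and is essentially the paper's proof. The paper regards $T^n\vee S^1$ as the one-object groupoid $B(\mathbb{Z}^n*\langle a\rangle)$ and notes that $a$ gives a natural isomorphism between the functors $g\mapsto g$ and $g\mapsto aga^{-1}$ on $B(\mathbb{Z}^n)$. That natural isomorphism is a path from $l$ to $r$ in the unpointed mapping space, which is exactly your free homotopy ``moving the basepoint around $a$''. Applying the functor $-\otimes\Ff$ on $\mathcal S$ then gives the result, as you say; your Step~1 is not needed.

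Two supporting sentences are false and should be removed or corrected.

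\textbf{The two embeddings into $T$.} The two embeddings of $T^n$ into $T$ are \emph{not} freely homotopic. Under $T\simeq T^n\vee T^n$ they are the two different wedge summands, and they already differ on $H_1$. What the arc actually gives is a free homotopy between the pointed (whiskered) inclusion of the second copy and its literal inclusion. The literal inclusion composed with $T\to T^n\vee S^1$ is, on the nose, the inclusion of $T^n$, i.e.\ $l$.

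\textbf{The factorization through $T^n\times S^1$.} The map $c|_{T^n}$ does not factor through $T^n\times S^1\to T^n\vee S^1$ in the intended way. Such a factorization would force $a$ to commute with $\mathbb{Z}^n$ in the free product, which it does not. The free homotopy is parametrized by an interval, not a circle. That kind of factorization only exists after mapping into $T^{n+1}$.

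With these two points removed, the argument you already give suffices: $r\simeq c\circ l$, and $c|_{T^n}$ is freely homotopic to the inclusion.
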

\begin{proof}
	Considering $T^n\vee S^1\simeq B(\mathbb{Z}^n*\langle a\rangle)$ as a groupoid with one object, we have the following commutative diagram
	\[\begin{tikzcd}[column sep=large, row sep=large]
	* \arrow[r, "a"] \arrow[d, "g"'] & * \arrow[d, "c_a(g)"] \\
	* \arrow[r, "a"'] & *
	\end{tikzcd}\]
	for any $g\in \mathbb{Z}^n$, where $c_a:g \mapsto aga^{-1}$ is the conjugation by $a$. Hence the restrictions of the composite along the top row of the Diagram \eqref{dig:main} to the first and second copies of $T^n$ are equivalent as functors $B(\mathbb{Z}^n)\to B(\mathbb{Z}^n*\langle a\rangle)$.  Applying the functor $-\otimes\Ff$ and taking homotopy groups imply the desired result.
\end{proof}
Since $l_*$ factors as $A_n\cong A_n\otimes \Ff\xrightarrow{id\otimes u} A_n\otimes \THH_*(\Ff)\cong M_n$, so is $r_*$. Consequently, we deduce that
\begin{equation}\label{eq:E2}
	E^2_{*,*}(n+1)\cong \Ff[\mu]\otimes \mathrm{Tor}^{A_n\otimes A_n}_{*,*}(A_n,A_n)\cong \Ff[\mu]\otimes \mathrm{HH}_{*,*}(A_n),
\end{equation}
where $\mathrm{HH}$ is the Hochschild homology over $\Ff$. Note that the bidegree of $\mu$ is $(0,2)$. The following lemma is well-known. However, we include its proof, since we need these isomorphisms to be compatible with operations on $E^r_{*,*}(n+1)$ given below.

\begin{lem}\label{lem:HH(A)}
	Let $A$ be $P(x)$, $P_p(x)$ or $\Gamma(x)$ with $|x|$ even, or $\Lambda(x)$ with $|x|$ odd. There is an isomorphism of bigraded $\Ff$-algebras
	\[f\colon\mathrm{HH}_{*,*}(A)\xrightarrow{\cong}A\otimes\mathrm{Tor}^A_{*,*}(\Ff,\Ff),\]
	where the $a\in A$ has bidegree $(0,|a|)$. Moreover, let $p$ be the map from the Hochschild complex to the bar complex given by
	\[p([a_0|\cdots|a_n])=\epsilon(a_0)[a_1|\cdots|a_n]),\]
	where $\epsilon$ is the argumentation of $A$. Then $f$ can be chosen to satisfy $(\epsilon\otimes id)\circ f=p_*$ and $f([1|a])=1\otimes[a]$, where $a$ is the algebra generator of $A$.
\end{lem}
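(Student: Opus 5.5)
We will use the standard fact that for a graded-commutative $\Ff$-algebra $A$ that is flat over $\Ff$, Hochschild homology is computed as
\[
\mathrm{HH}_{*,*}(A)\cong\mathrm{Tor}^{A\otimes A}_{*,*}(A,A).
\]
Since $A$ is graded commutative, the multiplication $A\otimes A\to A$ is a ring map, and we can change base along it:
\[
\mathrm{Tor}^{A\otimes A}(A,A)\cong \mathrm{Tor}^{A}\bigl(A, A\otimes^{\mathbb L}_{A\otimes A}\Ff\bigr).
\]
It is more concrete to do this with explicit small resolutions, and that is the plan. For each of the four cases we will write down a small differential graded $A\otimes A$-resolution of $A$ and read off the homology. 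All four algebras are monogenic (or tensor products of monogenic truncated polynomial algebras, as for $\Gamma(x)\cong\bigotimes_i P_p(\gamma_{p^i}x)$). Because Hochschild homology is symmetric monoidal, $\mathrm{HH}(A\otimes B)\cong\mathrm{HH}(A)\otimes\mathrm{HH}(B)$, and likewise $\mathrm{Tor}$ is monoidal. This reduces the $\Gamma(x)$ case to the $P_p(y)$ case applied to each $y=\gamma_{p^i}(x)$, provided the compatibility conditions are checked generator by generator.

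\textbf{Case $P(x)$ and $\Lambda(x)$.} The Koszul resolution $A\otimes A\otimes\Lambda(\sigma x)$ (respectively $A\otimes A\otimes\Gamma(\sigma x)$ when $|x|$ is odd), with $d(\sigma x)=x\otimes1-1\otimes x$, is a resolution of $A$. Tensoring with $A$ over $A\otimes A$ kills the differential, giving
\[
\mathrm{HH}(A)\cong A\otimes\Lambda(\sigma x)\quad\text{or}\quad A\otimes\Gamma(\sigma x),
\]
which is $A\otimes\mathrm{Tor}^A(\Ff,\Ff)$.

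\textbf{Case $P_p(x)$.} Here we use the standard $2$-periodic resolution $A\otimes A\otimes\Lambda(\sigma x)\otimes\Gamma(\varphi x)$, with
\[
d(\sigma x)=x\otimes1-1\otimes x,\qquad d(\gamma_k\varphi x)=\Bigl(\sum_{i+j=p-1}x^i\otimes x^j\Bigr)\sigma x\,\gamma_{k-1}\varphi x .
\]
After tensoring down to $A$, the coefficient $\sum_{i+j=p-1}x^i\otimes x^j$ becomes $p\,x^{p-1}=0$, so the differential again vanishes. This gives $\mathrm{HH}(A)\cong A\otimes\Lambda(\sigma x)\otimes\Gamma(\varphi x)$, which matches the formula $\mathrm{Tor}^{P_p(x)}(\Ff,\Ff)\cong \Lambda(\varrho x)\otimes \Gamma(\varphi^0x)$ recalled earlier. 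The multiplicative structure requires the resolution to be a DGA, and it is one for $p$ odd.

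\textbf{Compatibility conditions.} To get the stated normalisations, we compare with the Hochschild complex. There is a comparison map from the Koszul-type resolution to the two-sided bar resolution $B(A,A,A)$; tensoring over $A\otimes A$ with $A$ sends $\sigma x$ to $[1|x]$, i.e.\ to $1\otimes[x]$ after the identification. Applying $\epsilon\otimes\mathrm{id}$ corresponds to tensoring instead with $\Ff$ over $A$ on the $a_0$ slot, which turns the Hochschild complex into the bar complex via $p$ and turns our small complex into the small resolution computing $\mathrm{Tor}^A(\Ff,\Ff)$. Naturality of the comparison maps then gives $(\epsilon\otimes\mathrm{id})\circ f=p_*$. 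We define $f$ to be the composite of the inverse of the comparison isomorphism with these identifications. The condition $f([1|a])=1\otimes[a]$ then holds by construction.

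\textbf{Main obstacle.} The hard part is checking that the comparison map is multiplicative, so that $f$ is a map of algebras. Hochschild homology carries the shuffle product, and the DGA structure on the small resolution must be matched with it. For $\Lambda$ and $P$ this is routine. For $P_p(x)$ the class $\varphi x$ corresponds to a transpotence-type element $\sum[1|x^{p-1}|x]$-style chain, and divided powers must be verified. We will first try to handle this by a degree and filtration argument: any correction to multiplicativity lies in the $A$-submodule of lower filtration and must vanish by bidegree reasons. Failing that, we will define $f$ on algebra generators and extend it multiplicatively, using that the target is free over $A$ on the stated generators.
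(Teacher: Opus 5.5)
The additive part of your argument is fine, and it takes a different route from the paper. That includes the Koszul/Tate resolutions over $A\otimes A$ and the vanishing of the differential after base change along the multiplication (via $(x\otimes1-1\otimes x)^{p-1}=\sum_{i+j=p-1}x^i\otimes x^j\mapsto p\,x^{p-1}=0$). It also includes your naturality argument for $(\epsilon\otimes\mathrm{id})\circ f=p_*$ and $f([1|x])=1\otimes[x]$.

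\textbf{The gap: the algebra structure.} Beyond the classical module computation, the lemma asserts an isomorphism of algebras, and you leave this open. Neither fallback you offer closes it.

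\emph{Bidegree bookkeeping cannot determine structure constants.} For $A=P_p(x)$, the group $\mathrm{HH}_{4,2p|x|}(A)$ is one-dimensional, spanned by the class corresponding to $\gamma_2(\varphi x)$, and there is no lower-filtration part in that total degree. So bidegrees only tell you that the square of the class of $\varphi x$ is $c\,\gamma_2(\varphi x)$ for some $c\in\Ff$. Whether $c=2$ rather than $0$ (which would give a different algebra) is exactly what must be proved. The same applies to all the products that make $\Gamma(\varphi x)$ a divided power algebra, and hence to the case $\Gamma(x)$.

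\emph{Defining $f$ on algebra generators and extending multiplicatively} presupposes a presentation of the source $\mathrm{HH}(A)$ as an algebra, which is what is being computed. Freeness of the target over $A$ does not supply one.

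\emph{A smaller error.} Your opening base-change formula is wrong as written. $\mathrm{Tor}^A(A,-)$ is the identity, so the right-hand side is just $H_*(A\otimes^{\mathbb L}_{A\otimes A}\Ff)$, which for these $A$ is $\mathrm{Tor}^A(\Ff,\Ff)$ and has lost the factor $A$. You never use it, though.

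\textbf{How to close it in your framework.} The missing idea is that the product on $\mathrm{Tor}$ is independent of the DG-algebra resolution, so you never need a comparison map that is multiplicative at chain level.
Put $R=A\otimes A$. Let $K$ be your DG-algebra resolution (with divided powers on $\varphi x$), let $B=B(A,A,A)$ carry the shuffle product, and let $\alpha\colon K\to B$ be any $R$-linear comparison map.
The maps $\alpha\circ\mu_K$ and $\mu_B\circ(\alpha\otimes\alpha)$ both go from the bounded-below free $R\otimes R$-complex $K\otimes K$ to the resolution $B$ of $A$. Both are linear along $R\otimes R\to R$ and induce $\mu_A$ on $H_0$, so by the comparison theorem they are homotopic.
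Tensoring that homotopy with $\mu_M\colon M\otimes M\to M$, for $M=A$ and for $M=\Ff$, shows that $H(A\otimes_R\alpha)$ and $H(\Ff\otimes_R\alpha)$ are multiplicative.
Since $A\otimes_R K\cong A\otimes\Lambda(\sigma x)\otimes\Gamma(\varphi x)$ as algebras with zero differential, this gives the algebra isomorphism, compatibly with your normalizations.

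\textbf{The paper's route.} The paper avoids resolutions entirely. For $A=P(x),P_p(x),\Lambda(x)$ it writes down the shearing isomorphism of simplicial commutative $\Ff$-algebras $F_k\colon C_k(A)\to A\otimes B_k(A)$, given by $x^{(0)}\mapsto x^{[0]}$ and $x^{(i)}\mapsto x^{[0]}+x^{[i]}$. This is well defined because $x$ is primitive and $(x^{[0]}+x^{[i]})^p=0$, resp.\ $(x^{[0]}+x^{[i]})^2=0$. Multiplicativity is then automatic, and $(\epsilon\otimes\mathrm{id})\circ F_k=p$ holds on the nose. Finally, $f([1|x])=1\otimes[x]$ because $x^{[0]}$ is degenerate in simplicial degree one.
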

\begin{proof}
	First, assume that $A$ equals $P(x)$, $P_p(x)$ or $\Lambda(x)$. Write $C_{\bullet}(A)$ for the Hochschild complex and $B_{\bullet}(A)$ for the bar complex. For a fixed $k$, we denote the element $1\otimes\cdots\otimes x\otimes\cdots\otimes1\in C_k(A)$ and $1\otimes\cdots\otimes x\otimes\cdots\otimes1\in A\otimes B_k(A)$ with $x$ in $i$th-factor by $x^{(i)}$ and $x^{[i]}$, respectively. Define the map of $\Ff$-algebras $F_k\colon C_k(A)\to A\otimes B_k(A)$ by $F_k(x^{(0)})=x^{[0]}$ and $F_k(x^{(i)})=x^{[0]}+x^{[i]}$ for $1\leq i\leq k$. This is well-defined, since when $A=P_p(x)$, 
	\[(x^{[0]}+x^{[i]})^p=(x^{[0]})^p+(x^{[i]})^p=0,\]
	and when $A=\Lambda(x)$, 
	\[(x^{[0]}+x^{[i]})^2=(x^{[0]})^2+x^{[0]}x^{[i]}+x^{[i]}x^{[0]}+(x^{[i]})^2=0,\]
	since $|x|$ is odd. A direct computation also shows that $F_*$ commute with face maps and degeneracy maps. Similar, $F^{\prime}_k\colon A\otimes B_k(A)\to C_k(A)$ with $F(x^{[0]})=x^{(0)}$ and $F^{\prime}_k(x^{[i]})=x^{(i)}-x^{(0)}$ for $1\leq i\leq k$ also satisfy these relations. Since $F_{\bullet}$ and $F^{\prime}_{\bullet}$ are inverse to each other. $F_{\bullet}$ is an isomorphism of simplicial $\Ff$-algebras. By \cite[\S1.2.3]{HA}, taking geometric realizations and homotopy groups yields the isomorphism $f\colon\mathrm{HH}_{*,*}(A)\xrightarrow{\cong}A\otimes\mathrm{Tor}^A_{*,*}(\Ff,\Ff)$. The isomorphism for $\Gamma(x)$ follows from the isomorphism $\Gamma(x)\cong\bigotimes_{i\geq0}P_p(\gamma_{p^i}(x))$. 
\end{proof}

The initial case $n=1$ of Theorem \ref{thm:main} was proved by B\"okstedt in \cite{Bok}. In the rest of this paper, we fix $n\geq1$ and  assume that Theorem \ref{thm:main} is true for $k\leq n$. The following corollary follows immediately from the proceeding lemma and Equation \eqref{eq:torbn}.

\begin{cor}\label{cor:E2}
	The $E^2$-page of the above spectral sequence can be identified as
	\[E^2_{*,*}(n+1)\cong B_{\{n+1\}}\otimes \bigotimes_{\varnothing\neq U\subseteq I_n}(B_U\otimes B_{U\cup\{n+1\}})=\Cc.\]
\end{cor}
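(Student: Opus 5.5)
We start from Equation \eqref{eq:E2}, which identifies $E^2_{*,*}(n+1)$ with $\Ff[\mu]\otimes\mathrm{HH}_{*,*}(A_n)$. Here $\Ff[\mu]=B_1=B_{\{n+1\}}$ sits in filtration degree $0$. By the inductive hypothesis (Theorem \ref{thm:main} for $k=n$), there is an isomorphism of $\Ff$-algebras $A_n\cong C_{I_n}=\bigotimes_{\varnothing\neq U\subseteq I_n}B_U$. So the task reduces to computing $\mathrm{HH}_{*,*}(C_{I_n})$ as a bigraded algebra.

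Hochschild homology over $\Ff$ is symmetric monoidal: there is a Künneth isomorphism $\mathrm{HH}_{*,*}(A\otimes A')\cong\mathrm{HH}_{*,*}(A)\otimes\mathrm{HH}_{*,*}(A')$ of bigraded algebras. It comes from the isomorphism of simplicial commutative algebras $C_\bullet(A\otimes A')\cong C_\bullet(A)\otimes C_\bullet(A')$ together with Eilenberg--Zilber, and it holds because everything is flat over a field. This gives
\[\mathrm{HH}_{*,*}(A_n)\cong\bigotimes_{\varnothing\neq U\subseteq I_n}\mathrm{HH}_{*,*}(B_U).\]
Each $B_U$ with $|U|\geq 1$ is a tensor product of the elementary algebras $P(x)$ (only for $U$ a singleton, $x=\mu$), $\Lambda(x)$ with $|x|$ odd, and $\Gamma(x)$ with $|x|$ even. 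I would apply the Künneth isomorphism once more, now over these factors, and then Lemma \ref{lem:HH(A)} to each factor. Reassembling the factors gives
\[\mathrm{HH}_{*,*}(B_U)\cong B_U\otimes\mathrm{Tor}^{B_U}_{*,*}(\Ff,\Ff).\]
Here the Tor of a tensor product is the tensor product of the Tors, again by Künneth for bar constructions.

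Equation \eqref{eq:torbn} gives $\mathrm{Tor}^{B_U}(\Ff,\Ff)\cong B_{|U|+1}$ when $|U|\geq2$. When $|U|=1$ we instead use $\mathrm{Tor}^{\Ff[\mu]}(\Ff,\Ff)\cong\Lambda(\varrho\mu)=B_2$, which is immediate because $\varrho\mu$ is the unique admissible monic word of length $2$ ending in $\mu$. In either case the result is $B_{U\cup\{n+1\}}$, using the convention that objects indexed by a finite set depend only on its cardinality. Tensoring with $B_{\{n+1\}}=\Ff[\mu]$ then yields $C_{I_{n+1}}$, because the nonempty subsets of $I_{n+1}$ are exactly $\{n+1\}$, the sets $U$, and the sets $U\cup\{n+1\}$ for $\varnothing\neq U\subseteq I_n$.

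The main point needing care is the bookkeeping of bigradings and algebra structure. The Künneth maps must be isomorphisms of bigraded $\Ff$-algebras, with the shuffle product on Hochschild complexes. We also need the isomorphisms of Lemma \ref{lem:HH(A)} to assemble multiplicatively, which is where the explicit simplicial algebra maps $F_\bullet$ are used. Since these are maps of simplicial commutative algebras, their tensor products are again such maps, so assembling them causes no trouble. Finally, the isomorphism is only as $\Ff$-algebras on $E^2$, and sign conventions for odd-degree classes are harmless since $p$ is odd.
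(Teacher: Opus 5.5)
Your proposal is correct and follows the paper's route. The paper derives the corollary directly from Equation \eqref{eq:E2}, the inductive hypothesis $A_n\cong C_{I_n}$, Lemma \ref{lem:HH(A)} and Equation \eqref{eq:torbn}, and your argument is exactly this with the implicit K\"unneth step made explicit. Your separate treatment of singletons, via $\mathrm{Tor}^{\Ff[\mu]}(\Ff,\Ff)\cong\Lambda(\varrho\mu)=B_2$, is a useful addition, since Equation \eqref{eq:torbn} is only stated for $n\geq2$.
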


Therefore, we use this identification and the resulting generators of the $E^2$-page throughout.
\begin{rem}\label{rem:comp}
	Throughout the induction, for each proper subset $V\in I_{n+1}$, we use the $\Ff$-algebra isomorphism $\Phi\colon C_V\cong A_V$ and lifts of $E^2$-classes in Section \ref{sec5}. Proper subset factors are included by the corresponding inclusions. Moreover, the chosen lifts of algebra generators of $B_{n+1}$ are sent to zero by collapsing $i$th circle, for every $0\leq i\leq n+1$, and are sent to the corresponding classes in $E^2_{*,*}(S^{n+1})$ by collapsing the $n$-skeleton. There constructions are compatible with Lemma \ref{lem:HH(A)}.
\end{rem}

\subsection{Pinch maps and comultiplications}\label{sub3.2}
In this subsection, we relate spectral sequences to pinch maps. For $1\leq i\leq n+1$, the pinch map of $i$th $S^1$-factor of $T^{I_{n+1}}$ induces a map $\psi_i\colon T^{I_{n+1}}\to T^{I_{n+1}}\vee_{T^{I_{n+1}\setminus\{i\}}} T^{I_{n+1}}$. Our next goal in this section is to construct a spectral sequence for $\pi_*((T^{I_{n+1}}\vee_{T^{I_{n+1}\setminus\{i\}}} T^{I_{n+1}})\otimes\Ff)$ such that $\psi_i\otimes\Ff$ induces a multiplicative map of spectral sequences.

\begin{prop}\label{prop:pinch}
	For $1\leq i\leq n+1$, the pinch map of $i$th $S^1$-factor of $T^{I_{n+1}}$ induces a multiplicative map of spectral sequences
	\[\Delta_{n+1,i}^r:E^r_{*,*}(n+1)\longrightarrow E^r_{*,*}(n+1,i)\]
	such that $E^r_{*,*}(n+1,i)$ converges to $\pi_*((T^{I_{n+1}}\vee_{T^{I_{n+1}\setminus\{i\}}} T^{I_{n+1}})\otimes\Ff),$ for $r\geq2$. Moreover, 
	\[E^2_{*,*}(n+1,i)\cong \Cc\otimes_{C_{I_{n+1}\setminus\{i\}}}\Cc.\]
\end{prop}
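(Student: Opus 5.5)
The plan is to build $E^r_{*,*}(n+1,i)$ by the same recipe as $E^r_{*,*}(n+1)$ in Subsection \ref{sub3.1}, applied to a simplicial space model of $T^{I_{n+1}}\vee_{T^{I_{n+1}\setminus\{i\}}}T^{I_{n+1}}$, and to make the pinch map $\psi_i$ simplicial. Write $Y=T^{I_{n+1}}\vee_{T^{I_{n+1}\setminus\{i\}}}T^{I_{n+1}}$. Since $T^{I_{n+1}}\simeq T^{I_{n+1}\setminus\{i\}}\times S^1$ and the pinch map on the $i$th circle is $\mathrm{id}\times(S^1\to S^1\vee S^1)$, we have $Y\simeq T^{I_{n+1}\setminus\{i\}}\times(S^1\vee S^1)$. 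I would treat two cases separately.

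\emph{Case $i\neq n+1$.} The distinguished $(n+1)$st circle is untouched. Apply the decomposition of Diagram \eqref{dig:main} with $T^n$ replaced by $X=T^{I_n\setminus\{i\}}\times(S^1\vee S^1)$, using the same pushout squares: $X\vee X\to X$, then $X\vee S^1$. This gives $Y\otimes\Ff\simeq |((X\vee S^1)\otimes\Ff)\otimes_{(X\vee X)\otimes\Ff}B_\bullet(X\otimes\Ff,X\otimes\Ff,X\otimes\Ff)|$. The pinch map $T^n\to X$ is natural with respect to all these constructions, so it induces a map of simplicial $\Ee_\infty$-rings $B_\bullet(n+1)\to B_\bullet(n+1,i)$. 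Hence we get a multiplicative map of the associated spectral sequences, which converges strongly by \cite[Theorem X.2.9]{EKMM}. For the $E^2$-term, the proof of Lemma \ref{lem:l=r} applies verbatim, since conjugation by $a$ still relates $l$ and $r$. So, as in \eqref{eq:E2}, $E^2\cong\Ff[\mu]\otimes\mathrm{HH}_{*,*}(\pi_*(X\otimes\Ff))$. By the inductive hypothesis and the Künneth isomorphism,
\[\pi_*(X\otimes\Ff)\cong A_{I_n}\otimes_{A_{I_n\setminus\{i\}}}A_{I_n}.\]
This holds because $A_{I_n}$ is free over $A_{I_n\setminus\{i\}}$ as a tensor factor, so the relevant Tor-term is concentrated in degree $0$ (alternatively, use an Eilenberg--Moore argument). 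Hochschild homology commutes with tensor products, and we apply Lemma \ref{lem:HH(A)} factorwise. Regrouping factors as in Corollary \ref{cor:E2} then gives $\Cc\otimes_{C_{I_{n+1}\setminus\{i\}}}\Cc$.

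\emph{Case $i=n+1$.} Now the pinch acts on the $S^1$ appearing in $T^n\vee S^1$. Instead of the quotient $I\to I/\partial I$, use $I\to I/\{0,\tfrac12,1\}\cong S^1\vee S^1$, and take the analogous pushout of $T^n\vee S^1\vee S^1$ along $T^n\vee T^n\vee T^n\to T^n$. This realizes $Y\otimes\Ff$ as a two-sided bar construction
\[B_\bullet(M',A,A,A,A)\otimes_{A^{\otimes 3}}(-),\]
in effect an iterated bar construction. The map $\psi_{n+1}$ corresponds to the simplicial map that collapses the middle bar coordinate. The $E^2$-term is then $\Ff[\mu]\otimes\Ff[\mu']\otimes\mathrm{Tor}$ of the appropriate cyclic form. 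I would instead filter compatibly so that $E^2\cong \Ff[\mu,\mu']\otimes A_n\otimes\mathrm{Tor}^{A_n}(\Ff,\Ff)^{\otimes2}$, using Lemma \ref{lem:HH(A)}. This matches $\Cc\otimes_{C_{I_n}}\Cc$.

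The main obstacle is this last case: choosing simplicial models that make $\psi_{n+1}$ a simplicial map while keeping $E^2$ computable. The hard part is checking that the $E^2$-term has the claimed tensor form. The identification of the $M$-module actions via Lemma \ref{lem:l=r} has to hold for both circles, and the Künneth and Tor computations need the relevant flatness. I would first verify these for the Künneth step and then reduce the $E^2$ computation to Lemma \ref{lem:HH(A)}.
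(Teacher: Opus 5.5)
\textbf{Case $i\neq n+1$.} Your argument here is the paper's. Pinching the $i$th circle in every copy of $T^n$ in $B_\bullet(n+1)$ gives a map of simplicial $\Ee_\infty$-rings. The $E^2$-term then follows from K\"unneth (freeness of $A_{I_n}$ over $A_{I_n\setminus\{i\}}$), Lemma \ref{lem:l=r} and Lemma \ref{lem:HH(A)}.

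\textbf{Case $i=n+1$: the gap.} This is the non-formal case, and your proposal never constructs a map $E^r_{*,*}(n+1)\to E^r_{*,*}(n+1,n+1)$.
\begin{enumerate}
\item The source $B_\bullet(n+1)$ is indexed on $\Delta^1$, with $T^n\vee S^1$ attached at two vertices.
\item Your target is indexed on the subdivided interval $J=\Delta^1_1\cup_*\Delta^1_2$, with $T^n\vee S^1_1\vee S^1_2$ attached at three vertices. Three attaching points are forced: if you attach only at two endpoints via loop $a_2a_1$, only $a_2a_1$ commutes with $\pi_1(T^n)$, and the realization is not $T^n\times(S^1\vee S^1)$.
\item There is no simplicial map $\Delta^1\to J$ sending the two vertices to the two outer vertices of $J$. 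So $\psi_{n+1}$ is not simplicial between these models.
\item The map you name, ``collapsing the middle bar coordinate'', goes from a $J$-indexed model to a $\Delta^1$-indexed one. That is the opposite direction to $\psi_{n+1}$.
\end{enumerate}

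\textbf{The missing idea} is a zig-zag through a subdivided model of the \emph{source}.
\begin{enumerate}
\item Let $D_\bullet$ be $T^n\vee S^1$ glued to $\bigvee_{\sigma\in J_\bullet}T^n$ only at the two outer vertices, tensored with $\Ff$.
\item Collapsing $\Delta^1_2$ gives a map of simplicial $\Ee_\infty$-$\Ff$-algebras $\epsilon\colon D_\bullet\to B_\bullet(n+1)$. It is an isomorphism from $E^2$ on (\cite[Lemma 2.2]{AR05}). This is where your collapse map belongs: as the map to be inverted, not as the pinch.
\item The pinch is then an honest simplicial map $P$ from $D_\bullet$ to your three-vertex target. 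It is the identity on the $J$-indexed tori, and on $T^n\vee S^1$ it is the map $t$ sending the loop to $a_2a_1$.
\item The gluing is strict because $\tau\circ l=l_1$ and $\tau\circ r=r_2$ with $\tau=t\otimes\Ff$. This holds since $(a_2a_1)g_1(a_2a_1)^{-1}=a_2g_2a_2^{-1}$.
\item Set $\Delta^r_{n+1,n+1}=P^r\circ(\epsilon^r)^{-1}$. It is multiplicative because $P$ and $\epsilon$ both are.
\end{enumerate}

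Once this is in place, your three-vertex target is a valid substitute for the paper's $\mathrm{diag}\,B_{\bullet,\bullet}(n+1,n+1)$, which is essentially the same object. Its $E^2$-term needs no ad hoc refiltering. The argument of Lemma \ref{lem:l=r} applies at all three attaching points. Freeness of $\mathrm{HH}(A_n)$ over $A_n$ (Lemma \ref{lem:HH(A)}) then gives $\Cc\otimes_{A_n}\Cc$.
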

\begin{proof}
	First, for $i\le n$, pinch the $i$th $S^1$ in every cpoy of $T^n$ in $B_\bullet(n+1)$. This induces a map of simplicial $\Ee_{\infty}$-rings $B_{\bullet}(\psi_i)\colon B_{\bullet}(n+1)\to B_{\bullet}(n+1,i),$ where $B_{\bullet}(n+1,i)$ is obtained by replacing every copy of $T^n$ in $B_{\bullet}(n+1)$ by $T^{I_n}\vee_{T^{I_n\setminus\{i\}}} T^{I_n}$. Let $E^r_{*,*}(n+1,i)$ be the spectral sequence associated to $B_{\bullet}(n+1,i)$.
	Since $B_{\bullet}(n+1,i)\simeq B_{\bullet}(n+1)\otimes_{B_{\bullet}(n)}B_{\bullet}(n+1)$, there is an isomorphism $E^2_{*,*}(n+1,i)\cong \Cc\otimes_{C_{I_{n+1}\setminus\{i\}}}\Cc.$
	Moreover, 
	\[|B_{\bullet}(n+1)\otimes_{B_{\bullet}(n)}B_{\bullet}(n+1)|\simeq T^{I_{n+1}}\otimes\Ff\otimes_{T^{I_{n+1}\setminus\{i\}}\otimes\Ff} T^{I_{n+1}}\otimes\Ff.\]
	Hence $B_{\bullet}(\psi_i)$ induces a multiplicative map of spectral sequence with desired properties. 
	
	For $i=n+1$, label two circles in the target of the pinch map by $S^1_1$ and $S^1_2$, and let $a_\alpha$ be the generator of $\pi_1(S^1_\alpha)$, for $\alpha=1,2$. Write $B^\alpha_\bullet(n+1)$ for the copy of $B_\bullet(n+1)$ obtained by replacing $S^1$ by $S^1_\alpha$ for $\alpha=1,2$. Moreover, there are maps $l_\alpha,r_\alpha:T^n\otimes\mathbb F_p\to (T^n\vee S^1_\alpha)\otimes\mathbb F_p$ which are the corresponding copies of $l$ and $r$. Now, consider the bisimplicial object
	\[B_{\bullet,\bullet}(n+1,n+1)=B^1_{\bullet}(n+1)\otimes_{T^n\otimes\mathbb F_p}B^2_{\bullet}(n+1),\]
	where the two structure maps are induced by $r_1$ on the right and by $l_2$ on the left, respectively. Geometrically, these two actions correspond to terminal point and initial point of two intervals, respectively. Let $E^r_{*,*}(n+1,n+1)$ be the spectral sequence associated to $\mathrm{diag}B_{\bullet,\bullet}(n+1,n+1)$. By Lemma \ref{lem:l=r}, $E^2_{*,*}$ is free over $A_n$. Hence 
	\[E^2_{*,*}(n+1,n+1)\cong \Cc\otimes_{A_n}\Cc\cong \Cc\otimes_{C_{I_{n+1}\setminus\{n+1\}}}\Cc\]
	Next, we construct another simplicial $\Ee_{\infty}$-$\Ff$-algebra $D_{\bullet}$ and a span of simplicial $\Ee_{\infty}$-$\Ff$-algebras 
	\[B_\bullet(n+1)\xleftarrow{\ \epsilon\  }D_\bullet\xrightarrow{\ P\ }\mathrm{diag}B_{\bullet,\bullet}(n+1,n+1)\]
	such that $\epsilon$ induces an isomorphism of $E^2$-pages of multiplicative spectral sequences associated to the bar complexs. Then the desired multiplicative map $\Delta^r_{n+1,n+1}$ is given by $P^r\circ(\epsilon^r)^{-1}$, for $r\geq2$. In the fundamental group of $T^n\vee(S^1_1\vee S^1_2)\simeq (T^n\vee S^1_1)\cup_{T^n}(T^n\vee S^1_2)$, we have that $a_1g_1a_1^{-1}=g_2$ for $g\in\mathbb Z^n$, where $g_\alpha$ is the copy of $g$ in the $\alpha$th torus. The inclusion of the torus and the loop $a_2a_1$ define a map of pointed spaces $t\colon T^n\vee S^1\to T^n\vee(S^1_1\vee S^1_2)$. Applying $-\otimes\mathbb F_p$ gives a map of $\Ee_{\infty}$-$\Ff$-algebras
	\[\tau:(T^n\vee S^1)\otimes\mathbb F_p\to B_{0,0}(n+1,n+1).\]
	Since $(a_2a_1)g_1(a_2a_1)^{-1}=a_2g_2a_2^{-1}$, we have that $\tau\circ r=r_2$. Moreover, $\tau\circ l=l_1,$ and $r_1=l_2$ by construction. 
	
	To construct the simplicial $\Ee_{\infty}$-$\Ff$-algebra $D_{\bullet}$, let $J$ be the simplicial set $\Delta^1_1\cup_*\Delta^1_2$ where the terminal point of $\Delta^1_1$ is identified with the initial point of $\Delta^1_2$. The inclusions of the initial point of $\Delta^1_1$ and the terminal point of $\Delta^1_2$ induce a map of simplicial spaces $T^n\vee T^n\to \bigvee_{\sigma\in J_{\bullet}}T^n$, where we consider $T^n\vee T^n$ as a constant simplicial space. Define $D_{\bullet}$ to be
	\[D_{\bullet}:=\left[(T^n\vee S^1)\mathop{\sqcup}_{T^n\vee T^n}\left(\bigvee_{\sigma\in J_{\bullet}}T^n\right)\right]\otimes\Ff.\]
	Note that 
	\[(\bigvee_{\Delta^1}T^n)\otimes\Ff\simeq B_{\bullet}(T^n\otimes\Ff,T^n\otimes\Ff,T^n\otimes\Ff).\]
	Hence collapsing $\Delta^1_2$ into a point induces a map of simplicial $\Ee_{\infty}$-$\Ff$-algebras $\epsilon\colon D_{\bullet}\to B_{\bullet}(n+1)$. By \cite[Lemma 2.2]{AR05}, $\epsilon^r$ is an isomorphism for $r\geq2$. To construct the map $P:D_\bullet\to\mathrm{diag}B_{\bullet,\bullet}(n+1,n+1),$ we also consider underlying spaces. On the factor $T^n\vee S^1$, we use $t$ to send it to $T^n\vee(S^1_1\vee S^1_2)$. Moreover, we send each copy of $T^n$ indexed by a simplex of $\Delta^1_\alpha$  to the corresponding copy in the $\alpha$th bar complex by the identify map, for $\alpha=1,2$. Since $\tau\circ l=l_1,$ and $\tau\circ r=r_2,$ these maps agree with $t$ at initial and terminal points. Hence the maps descend to the pushout and define $P$.
\end{proof}

We also need following properties of $\Delta^r_{n+1,i}$ and $E^r_{*,*}(n+1,i)$ constructed in the proceeding proposition.

\begin{lem}\label{lem:prop of delta}
	For $1\leq i\leq n+1$ and $r\geq2$, $\Delta^r_{n+1,i}$ and $E^r_{*,*}(n+1,i)$ have the following properties.
	\begin{enumerate}
	\item The inclusions of the two copies of $T^{n+1}$ induce maps of spectral sequences 
	\[j^r_\alpha\colon E^r_{*,*}(n+1)\longrightarrow E^r_{*,*}(n+1,i), \qquad \alpha=1,2.\]
	Under the identification
	\[E^2_{*,*}(n+1,i)\cong \Cc\otimes_{C_{I_{n+1}\setminus\{i\}}}\Cc,\]
	they are given by $j^2_1(x)=x\otimes1$ and $j^2_2(x)=1\otimes x$, respectively. Moreover, their images generate $E^2_{*,*}(n+1,i)$.
	\item Collapsing another circle induces a map
	\[\rho_\alpha^r\colon E^r_{*,*}(n+1,i)\longrightarrow E^r_{*,*}(n+1), \qquad \alpha=1,2.\]
	Moreover, we have that
	\[\rho^r_1\Delta^r_{n+1,i}=\rho^r_2\Delta^r_{n+1,i}=id.\]
	\item For $U\subseteq I_{n+1}$, let $i_U:T^U\to T^{i_{n+1}}$ be the corresponding inclusion. If $i\in U$, write $\Delta_{U,i}$ for the pinch map on $i$th circle. Then we have that 
	\[\Delta^r_{n+1,i}\circ i^r_U=(i^r_U\otimes_{i^r_{U\setminus\{i\}}} i^r_U)\circ\Delta^r_{U,i},\]
	Moreover, if $i\notin U$, then we have that
	\[\Delta^r_{n+1,i}\circ i^r_U=j^r_1\circ i_U^r=j_2^r\circ i_U^r.\]
	\item Let $q_{n+1}:T^{I_{n+1}}\longrightarrow S^{n+1}$ be the collapse map, and let
	\[Q_{n+1,i}:T^{I_{n+1}\setminus\{i\}}\times(S^1\vee S^1)\longrightarrow S^{n+1}\vee S^{n+1}\]
	be induced by applying $q_{n+1}$ to each torus. Writing $\Delta_{S^{n+1},,i}$ for the corresponding sphere pinch. Then we have
	\[Q_{n+1,i}^r\circ\Delta^r_{n+1,i}=\Delta_{S^{n+1},i}^r\circ q^r_{n+1}.\]
\end{enumerate}
\end{lem}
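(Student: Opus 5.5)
The plan is to prove all four properties by working at the level of simplicial $\Ee_\infty$-$\Ff$-algebras (equivalently, simplicial spaces to which $-\otimes\Ff$ is applied). Each map of spectral sequences will come from a map of simplicial objects compatible with the bar filtrations. Identities between maps of spectral sequences are then checked either on underlying simplicial spaces up to homotopy, or on $E^2$-pages via the identification of Proposition \ref{prop:pinch}. The two cases $i\le n$ and $i=n+1$ are treated separately throughout, following the two constructions in the proof of Proposition \ref{prop:pinch}.

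For (1), when $i\le n$ the maps $j_\alpha$ come from the two inclusions $B_\bullet(n+1)\to B_\bullet(n+1)\otimes_{B_\bullet(n)}B_\bullet(n+1)$. When $i=n+1$ they come from $B^\alpha_\bullet(n+1)\to \mathrm{diag}B_{\bullet,\bullet}(n+1,n+1)$. On $E^2$ these are the two structure inclusions of the relative tensor product $\Cc\otimes_{C_{I_{n+1}\setminus\{i\}}}\Cc$, so they are $x\mapsto x\otimes 1$ and $x\mapsto 1\otimes x$, and their images generate as algebras. The Künneth identification used in Proposition \ref{prop:pinch} requires flatness over $C_{I_{n+1}\setminus\{i\}}$; for $i=n+1$ this flatness is supplied by Lemma \ref{lem:l=r}.

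For (2), collapsing the circle $S^1_{3-\alpha}$ (respectively the $i$th circle of the other torus copy) is a simplicial map. For $i\le n$ the composite with the pinch is homotopic to the identity levelwise, since pinch followed by collapse is homotopic to the identity on $S^1$, and it commutes with the simplicial structure. For $i=n+1$ the argument instead runs through the span $B_\bullet(n+1)\leftarrow D_\bullet\to \mathrm{diag}B_{\bullet,\bullet}$. Composing $P$ with collapsing $S^1_2$ sends the loop $a_2a_1$ to $a_1$, and collapses the $\Delta^1_2$-part as $\epsilon$ does. So $\rho_1\circ P$ agrees with $\epsilon$ up to relabeling, which gives $\rho_1^rP^r(\epsilon^r)^{-1}=\mathrm{id}$. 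The case $\alpha=2$ is symmetric: it uses the collapse of $\Delta^1_1$, which by \cite[Lemma 2.2]{AR05} also induces an $E^2$-isomorphism compatible with $\epsilon$.

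For (3) and (4), the inclusions $i_U$ and the collapse $q_{n+1}$ are induced by maps of spaces. These extend to maps of the simplicial models of Subsection \ref{sub3.1}: the circle $n+1$ in $U$ is handled by the bar direction, the others by each $T^n$ copy, and $q_{n+1}$ by the skeleton collapse already used to define $E^r(n+1)\to E^r(S^{n+1})$. Naturality of pinching in the space then gives commuting squares of simplicial objects. When $i\notin U$, the pinch restricted to $T^U$ factors through the common subspace $T^{I_{n+1}\setminus\{i\}}$, which gives $j_1\circ i_U=j_2\circ i_U$.

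The main obstacle is the case $i=n+1$. There the pinch is only defined as the zigzag $P^r(\epsilon^r)^{-1}$, so each identity has to be checked for maps out of $D_\bullet$ compatible with $\epsilon$. For this I would build the analogous $D_\bullet$ for $U$ (and for $S^{n+1}$) and verify that the required squares commute strictly at the simplicial level.
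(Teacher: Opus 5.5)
Your proposal is correct and follows the paper's route. The paper's proof only says that everything follows from the constructions in Proposition \ref{prop:pinch}, and you carry out exactly that check: you separate $i\le n$, handled by levelwise functoriality in the torus, from $i=n+1$, where each identity is checked against the zigzag $P^r(\epsilon^r)^{-1}$ (for instance $\rho_1\circ P=\epsilon$ up to relabeling $S^1_1$ as $S^1$).

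The one point to tighten is $\alpha=2$ when $i=n+1$. There $\rho_2\circ P$ is the collapse $\epsilon'$ of $\Delta^1_1$, and you need $\epsilon'^r=\epsilon^r$, not merely that $\epsilon'^r$ is an isomorphism as \cite[Lemma 2.2]{AR05} gives. This equality holds because the two collapses $J\to\Delta^1$, with vertex maps $(0,0,1)\le(0,1,1)$, are simplicially homotopic rel endpoints, so the induced maps agree from $E^2$ on.
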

\begin{proof}
	The proof follows from constructions in Proposition \ref{prop:pinch}.
\end{proof}

We end this subsection with following lemma, which will be used in later computations. For convenience, set $ \Cp=\bigotimes_{\varnothing\ne U\subsetneq I_{n+1}}B_U$. Moreover, for $1\leq i\leq n+1$, let $d_i:\Cp\to C_{I_{n+1}\setminus\{i\}}$ be the map collapsing each factor containing $i$. Consider $C_{I_{n+1}\setminus\{i\}}$ as a subalgebra of $\Cc$, we have that $d_i(x)=x$ for $x\in C_{I_{n+1}\setminus\{i\}}$ and $\bigcap_{i\in I_{n+1}}C_{I_{n+1}\setminus\{i\}}=\Ff$.

\begin{lem}[compare with {\cite[Lemma 9.6]{Vee18}}]\label{lem:pdesc}
	If $y\in E^2_{*,*}(n+1)=\Cc$ satisfies
	\[q^2_{n+1}(y)=0\qquad \text{and}\qquad \Delta^2_{n+1,i}(y)=y\otimes1+1\otimes y\]
	for $i\in I_{n+1}$, then $y\in \Cp$. Moreover, if the $E^r$-page, $\Delta^r_{n+1,i}$ and $q^r_{n+1}$ are the same as these in $E^2$-page in total degrees at most $|y|$, then $y\in \Cp$ under this identification. Finally, if $A_{n+1}$ contains $\Cp$ as a subalgebra and homogeneous lifts $\widetilde{v}$ of the monomial basis of $B_{n+1}$ along $q_{n+1}$ form a free $\Cp$-basis with $\widetilde{1}=1$, then the same conclusion holds in $A_{n+1}$. 
\end{lem}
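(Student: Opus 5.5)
Write $\Cc=\Cp\otimes B_{n+1}$, where $B_{n+1}=B_{I_{n+1}}$ is the top factor. Every $y\in\Cc$ can then be written uniquely as
\[y=\sum_{v} c_v\otimes v,\qquad c_v\in\Cp,\]
with $v$ running over the monomial basis of $B_{n+1}$. The goal is to show that $c_v=0$ for every $v\neq 1$.

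\textbf{Step 1: the coproduct on the two factors.} Under the identification of Proposition \ref{prop:pinch}, $\Delta^2_{n+1,i}$ is multiplicative. On a factor $B_U$ with $i\notin U$ it acts as $x\mapsto x\otimes 1=1\otimes x$, by Lemma \ref{lem:prop of delta}(3). On a factor $B_U$ with $i\in U$ it is the sphere coproduct $\Delta_{B_U}$ in the $i$th coordinate, using Proposition \ref{prop:hopfbn} and Lemma \ref{lem:prop of delta}(3),(4). In particular, on the top factor $B_{n+1}$ it is $\Delta_{B_{n+1}}$, tensored over $C_{I_{n+1}\setminus\{i\}}$.

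\textbf{Step 2: reduction to primitives of $B_{n+1}$.} Choose the largest $v$ (by degree) with $c_v\neq0$, and compare the components of $\Delta^2_{n+1,i}(y)$ lying in $\Cp\otimes_{C_{I_{n+1}\setminus\{i\}}}\Cp\otimes(B_{n+1}\otimes B_{n+1})$. The hypothesis $\Delta(y)=y\otimes1+1\otimes y$ forces the $B_{n+1}$-part to behave like a primitive. So the components $\bar\Delta(v)$ with both tensor factors of positive degree must cancel against contributions of $c_v$. These contributions can only land in $B_{n+1}$-degree $(v,1)$ or $(1,v)$, so they cannot cancel the mixed terms. Hence, in each coordinate, $v$ must be primitive in $B_{n+1}$.

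\textbf{Step 3: killing the coefficients.} Since $B_{n+1}\cong C(V_{n+1})$ is the cofree coalgebra, its primitives are exactly $V_{n+1}$, the span of the generator words. The condition $q^2_{n+1}(y)=0$ then controls these terms. Collapsing the $n$-skeleton sends $\Cp$ to $\Ff$ (positive-degree elements go to $0$) and is the identity on $B_{n+1}$, as in Remark \ref{rem:comp}. This gives $\sum_v\epsilon(c_v)v=0$, so the scalar parts of the $c_v$ vanish for $v\neq1$.

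The remaining possibility is terms $c_v\otimes v$ with $c_v$ of positive degree and $v$ primitive. For these I would look at $\Delta_{n+1,i}$ and use the $\Cp$-component. Here $c_v\otimes v$ produces the cross terms $(c_v\otimes1)(1\otimes v)$, which are not of the form $y\otimes1+1\otimes y$, unless $c_v$ lies in $\bigcap_i C_{I_{n+1}\setminus\{i\}}=\Ff$. This uses the maps $d_i$, and is the same argument as \cite[Lemma 9.6]{Vee18}. I expect this bookkeeping of cross terms to be the main obstacle: one must check that the cross terms coming from different $v$ cannot cancel one another. The fix is to filter by the degree of $v$ and argue from the top degree downward.

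\textbf{The two variants.} The $E^r$ statement is formal: the argument uses only $\Delta^2$, $q^2$ and the algebra identification in total degrees at most $|y|$, and these agree on the $E^r$-page in that range by hypothesis. For the version in $A_{n+1}$, replace the tensor decomposition by the free $\Cp$-basis $\{\widetilde v\}$, so that $y=\sum c_v\widetilde v$ uniquely. Then run the same argument, using that $q_{n+1}(\widetilde v)=v$ and that $\Delta_{n+1,i}(\widetilde v)$ agrees with $\Delta(v)$ modulo terms of lower $B_{n+1}$-degree. That is precisely what the top-down induction needs.
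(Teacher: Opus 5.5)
Your outline follows the paper's argument, which is Veen's: write $y=\sum_v a_v v$ with $a_v\in\Cp$, work at the top $B_{n+1}$-degree, use $\Delta^2_{n+1,i}$ to force the top coefficient into $\bigcap_i C_{I_{n+1}\setminus\{i\}}=\Ff$, and finish with $q^2_{n+1}$. However, the decisive step is left open; you flag it yourself as the main obstacle. Step 2, your route toward it, does not work as written.

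The mixed components $v'\otimes v''$ of $\Delta^2_{n+1,i}(y)$ receive contributions $\sum_{|w|=|v|}\lambda_w^{v',v''}\Delta^2_{n+1,i}(a_w)$ from \emph{every} top-degree $w$, where $\lambda_w^{v',v''}$ are the structure constants of $\Delta_{B_{n+1}}$. So these components can cancel among different $w$. You only learn that a $\Cp$-combination of top-degree monomials has primitive $B_{n+1}$-part, not that your chosen $v$ is primitive.

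Step 1 also overstates Lemma \ref{lem:prop of delta}(4). On the top factor, $\Delta^2_{n+1,i}$ agrees with $\Delta_{B_{n+1}}$ only modulo $\mathrm{Ker}(Q^2_{n+1,i})$. In any case, primitivity of $v$ is never needed.

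The missing idea is to compare only the coefficient of $1\otimes v$. Use the free decomposition $\Cc\otimes_{C_{I_{n+1}\setminus\{i\}}}\Cc\cong(\Cp\otimes_{C_{I_{n+1}\setminus\{i\}}}\Cp)\otimes B_{n+1}\otimes B_{n+1}$, and take $v\neq 1$ of maximal degree with $a_v\neq0$.

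\begin{itemize}
\item For $|w|<|v|$, the coefficient of $1\otimes v$ in $\Delta^2_{n+1,i}(w)$ vanishes for degree reasons.
\item For $|w|=|v|$, this coefficient is a scalar, so the $\mathrm{Ker}(Q^2_{n+1,i})$-ambiguity cannot affect it. Lemma \ref{lem:prop of delta}(4) and the counit identity then identify it as $\delta_{w,v}$.
\item By Lemma \ref{lem:prop of delta}(3), $\Delta^2_{n+1,i}(\Cp)\subseteq\Cp\otimes_{C_{I_{n+1}\setminus\{i\}}}\Cp$.
\end{itemize}

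Together these give exactly $\Delta^2_{n+1,i}(a_v)=1\otimes a_v$, with no cancellation bookkeeping at all.

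To turn this into $a_v\in C_{I_{n+1}\setminus\{i\}}$, you need the retraction of Lemma \ref{lem:prop of delta}(2), which your appeal to "the maps $d_i$" leaves implicit:
\[a_v=\rho^2_2\Delta^2_{n+1,i}(a_v)=\rho^2_2(1\otimes a_v)=d_i(a_v).\]
Intersecting over $i$ gives $0\neq a_v\in\Ff$. This contradicts $q^2_{n+1}(y)=\sum_w\epsilon(a_w)w=0$, since the coefficient of $v$ there is $a_v$.

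With this step supplied, your treatment of the $E^r$ and $A_{n+1}$ variants matches the paper's.
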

\begin{proof}
	On $E^2$-page, write $y=\sum_v a_vv$ with $a_v\in C'_{I_{n+1}}$, where $v$ ranges over the monomial basis of $B_{n+1}$. If $y\notin C'_{I_{n+1}}$, then we can choose some $v\ne1$ of maximal degree with $a_v\ne0$. We claim that $a_v\in\Ff$. Indeed, choose another element $w$ in the monomial basis of $B_{n+1}$. If $|w|=|v|$, then the coefficient of $1\otimes v$ in $\Delta^2_{n+1,i}(w)$ is zero by Lemma \ref{lem:prop of delta}.(4) and the counit identity. The same happens when $|w|<|v|$ for degree reasons. Now, comparing coefficients of $1\otimes v$ in the equation 
	\[y\otimes1+1\otimes y=\Delta^2_{n+1,i}(y)=\sum_v \Delta^2_{n+1,i}(a_v)\Delta^2_{n+1,i}(v)\]
	gives that $\Delta^2_{n+1,i}(a_v)=1\otimes a_v$. By Lemma \ref{lem:prop of delta}.(2), applying $\rho^2_2$ to both sides implies that 
	\[a_v=\rho^2_2\circ\Delta^2_{n+1,i}(a_v)=\rho^2_2(1\otimes a_v)=d_i(a_v)\in C_{I_{n+1}\setminus\{i\}}.\]
	Hence $a_v\in\bigcap_{i\in I_{n+1}}C_{I_{n+1}\setminus\{i\}}=\Ff$. Consequently, the coefficient of $v$ in $q^2_{n+1}(y)=0$ is nonzero, which is a contradiction. The same argument also works for later pages.
	
	Next, assume that $A_{n+1}$ is free over $\Cp$. Then $A_{n+1}$ is also free over $C_{I_{n+1}\setminus\{i\}}$ and the target of $\Delta_{n+1,i}$ is isomorphic to $A_{n+1}\otimes_{C_{I_{n+1}\setminus\{i\}}}A_{n+1}$. Hence the same argument as above also applies in this case.
\end{proof}

\subsection{The suspension maps}\label{sub3.3} Finally, we consider the relationship between the suspension map and $E^r_{*,*}(n+1)$. First, we consider the ordinal suspension operation, which is induced by the equivalence $T^{n+1}\otimes\Ff\simeq \THH(T^n\otimes\Ff)$, where the topological Hochschild homology is induced by the last $S^1$-factor. Denote the resulting map by $\sigma_{T^n}\colon\pi_*(T^n\otimes\Ff)\to \pi_{*+1}(T^{n+1}\otimes\Ff).$ Note that, by construction, we have that $q_{n+1}\circ \sigma_{T^n}=\sigma_{B_n}\circ q_n$.

\begin{lem}\label{lem:susp}
	For $a\in A_n$, $\sigma_{T^n}(a)$ has a representative in $F_1(n+1)$ such that its image in $E^2$-page is represented by $1\otimes [1\otimes a]\in \Ff[\mu]\otimes\mathrm{HH}(A_n)$.
\end{lem}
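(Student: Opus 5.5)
The plan is to realize $\sigma_{T^n}$ at the point-set (simplicial) level and compare it with the standard description of the suspension in a cyclic bar construction. Recall that for a commutative $\Ff$-algebra spectrum $R$, the suspension $\sigma\colon \pi_*R\to\pi_{*+1}\THH(R)$ comes from the circle action. Concretely, the class $\sigma(a)$ is represented in the skeletal filtration of the cyclic bar construction $B^{\mathrm{cy}}_\bullet(R)$ by the $1$-simplex $1\otimes a\in R\otimes R$. It therefore lies in filtration $F_1$, and its image in $E^1_{1,*}=\pi_*(R\otimes R)$ is $1\otimes a$, which gives the Hochschild class $[1|a]$ on the $E^2$-page. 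We apply this with $R=T^n\otimes\Ff$, using the last circle.

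The first step is to relate the cyclic bar construction $B^{\mathrm{cy}}_\bullet(T^n\otimes\Ff)$, i.e.\ $(S^1_\bullet)\otimes(T^n\otimes\Ff)$ for the minimal simplicial circle, to our simplicial object $B_\bullet(n+1)$. For this, use the simplicial space $(T^n\vee S^1)\cup_{T^n\vee T^n}B_\bullet(T^n,T^n,T^n)$ from \S3.1. There is a map from $B_\bullet(T^n,T^n,T^n)$ with its two ends glued to $T^n$ (which models $S^1\times T^n$ as a two-sided bar construction, i.e.\ the cyclic bar construction up to the standard equivalence $B^{\mathrm{cy}}(R)\simeq R\otimes_{R\otimes R}B(R,R,R)$). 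After applying $-\otimes\Ff$, this gives a map of simplicial $\Ee_\infty$-rings $B_\bullet(\Ff\text{-level cyclic})\to B_\bullet(n+1)$ compatible with geometric realizations. Equivalently, restrict the right factor $M_n$ along $A_n\to M_n$. This map preserves skeletal filtrations, and on $E^2$-pages it is the inclusion $\mathrm{HH}_{*,*}(A_n)\to\Ff[\mu]\otimes\mathrm{HH}_{*,*}(A_n)$, $z\mapsto 1\otimes z$, by \eqref{eq:E2} together with Lemma \ref{lem:l=r} (the two module structures agree, so the $\mu$-factor splits off).

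The second step is to check that $\sigma_{T^n}$, defined via $T^{n+1}\otimes\Ff\simeq\THH(T^n\otimes\Ff)$ on the last factor, agrees with the suspension computed in the cyclic bar model. Here the circle action is the one rotating the last $S^1$. The key point is that the action map $S^1_+\wedge R\to\THH(R)$, restricted to the fundamental class of $S^1$, factors through the $1$-skeleton of the simplicial circle. Hence the image of $\sigma(a)$ in $F_1/F_0=\Sigma(\bar R\otimes R)$, modulo degeneracies, is $1\otimes a$. Identifying $E^1_{1,*}\to E^2_{1,*}$ with passage to Hochschild homology then gives $[1\otimes a]$. Transporting along the map from the first step yields $1\otimes[1\otimes a]$.

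I expect the main obstacle to be the compatibility of the identifications in the two steps. The equivalence of $B_\bullet(n+1)$ with $T^{n+1}\otimes\Ff$ came from the pushout diagram \eqref{dig:main}. One has to verify that the ``last circle'' used for $\sigma_{T^n}$ is the interval $[0,1]$ collapsed into $S^1$ there, and that the resulting circle action matches the simplicial rotation. I would handle this by writing both as functors of the simplicial circle $\Delta^1/\partial\Delta^1$ and invoking naturality of $X\mapsto X\otimes\Ff$ (Proposition \ref{prop:push}). If needed, sign or degeneracy ambiguities only affect the class by elements of $F_0$, and these can be absorbed since the statement asks only for some representative in $F_1(n+1)$.
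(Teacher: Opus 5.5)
\textbf{The gap is in your first step: the comparison map you describe does not exist.} The object you map from is not the cyclic bar construction of $R=T^n\otimes\Ff$.

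In \S3.1 the two-sided bar construction $B_\bullet(T^n,T^n,T^n)$ is built from wedges: its $k$-simplices are $\bigvee_{k+2}T^n$, matching $R^{\otimes_{\Ff}(k+2)}$. Gluing its two ends to $T^n$ therefore gives $\bigvee_{S^1_\bullet}T^n$, whose realization is $S^1_+\wedge T^n$, not $S^1\times T^n$. After $-\otimes\Ff$ this is the $\Ff$-relative cyclic bar construction, which realizes to $\mathrm{HH}(R/\Ff)\simeq\THH(R)\otimes_{\THH(\Ff)}\Ff$. That is exactly why its $E^2$-page is $\mathrm{HH}_{*,*}(A_n)$. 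By contrast, $S^1_\bullet\otimes R$ has levels $R^{\otimes_{\Ss}(k+1)}$ and a different $E^2$-page.

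More seriously, ``restricting $M_n$ along $A_n\to M_n$'' is not a map of $A_n\otimes A_n$-algebras. The structure maps $l$ and $r\simeq c\circ l$ are different maps. Lemma \ref{lem:l=r} identifies them only on homotopy groups, through a free homotopy $\beta$ whose basepoint trace is the wedge loop $a$. That homotopy drags the unit $\Ff\to M$ around the very loop that produces $\mu$. So it is not a homotopy of $\Ff$-algebra maps, and it gives no map out of the relative model.

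The natural comparison goes the other way: collapsing the wedge circle gives $B_\bullet(n+1)\to$ (relative model). From that direction you can neither conclude $\sigma_{T^n}(a)\in F_1(n+1)$ nor rule out $\mu$-multiples in its $E^\infty$-class.

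Your second step has a related problem. In $|B_\bullet(n+1)|$ the interval $\Delta^1\times\{x_0\}$ is collapsed, so the basepoint loop of the last circle is the wedge loop $a$, which lies in filtration $0$. Thus $B_\bullet(n+1)$ is not a functor of $\Delta^1/\partial\Delta^1$, and there is no cyclic structure whose rotation is the last-circle action.

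\textbf{The missing idea} is to build $\sigma_{T^n}$ inside $\mathcal F_1(n+1)=|\mathrm{sk}_1B_\bullet(n+1)|$ from two pieces.
\begin{itemize}
\item The first is the path $\alpha\colon r\Rightarrow l$ supplied by the $1$-simplex $j=\eta_M\otimes\mathrm{id}\colon T^n\otimes\Ff\to B_1(n+1)$, whose faces are $l$ and $r$.
\item The second is the homotopy $\beta\colon l\Rightarrow r$ of Lemma \ref{lem:l=r}, which factors through $M=B_0(n+1)$.
\end{itemize}
After realization, $\alpha$ becomes constant and $\beta$ becomes the rotation of the last circle, so $i_1(\alpha*\beta)\simeq\sigma_{T^n}$. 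In $\mathcal F_1/\mathcal F_0\simeq\Sigma\,\mathrm{cofib}(s_0)$ the piece $\beta$ dies, leaving $\Sigma\bar j$, which gives the class $1\otimes[1\otimes a]$. This is the paper's proof.

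Your strategy can be repaired along these lines. Map the genuine simplicial space $S^1_\bullet\times T^n$ (levels $\coprod_{k+1}T^n$) into $(T^n\vee S^1)\cup_{T^n\vee T^n}B_\bullet(T^n,T^n,T^n)$, using the free homotopy $\beta$ as the compatibility datum at one face, and then apply $-\otimes\Ff$. Under this map, $1\otimes a\in\pi_*(R\otimes_{\Ss}R)$ goes to $1_M\otimes a$ in $E^1_{1,*}(n+1)$. However, the source spectral sequence then no longer has $E^2=\mathrm{HH}(A_n)$, and the argument is the loop $\alpha*\beta$ in different clothing.
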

\begin{proof}
	For convenience, we write $M$ for $(T^n\vee S^1)\otimes\Ff$ in this proof. Then the unit map gives a map $j:=\eta_M\otimes id\colon T^n\otimes\Ff\to M\otimes (T^n\otimes\Ff)=B_1(n+1)$ such that $d_0\circ j=l$ and $d_1\circ j=r$. Hence we obtain a path $\alpha\colon r\Rightarrow l$ in $\mathrm{Map}_{\mathrm{Sp}}(T^n\otimes\Ff,\mathcal{F}_1(n+1))$, where $\mathcal{F}_1(n+1)=|\mathrm{sk}_1B_{\bullet}(n+1)|$ and we denote the image of $r$ and $l$ in $\mathcal{F}_1(n+1)$ by the same notations. Lemma \ref{lem:l=r} gives a path $\beta\colon l\Rightarrow r$ in $\mathrm{Map}_{\mathrm{Sp}}(T^n\otimes\Ff,M)$. Composing with the canonical map $M\to \mathcal{F}_1(n+1)$ gives a path in $\mathrm{Map}_{\mathrm{Sp}}(T^n\otimes\Ff,\mathcal{F}_1(n+1))$, which is still denoted by $\beta$. Consequently, we obtain an element
	\[\alpha*\beta\in\pi_1(\mathrm{Map}_{\mathrm{Sp}}(T^n\otimes\Ff,\mathcal{F}_1(n+1)),r)\cong[\Sigma T^n\otimes\Ff,\mathcal{F}_1(n+1)].\]
	Composing with the canonical map $i_1:\mathcal{F}_1(n+1)\to |B_{\bullet}(n+1)|\simeq T^{n+1}\otimes\Ff$, we see that $i_1(r)=i_1(l)$, since conjugation by $a$ is trivial in $\Zz^n\times \Zz$. In particular, there is a commutative diagram
	\[\begin{tikzcd}[column sep=large, row sep=large]
	B_1(n+1) \arrow[r, shift left=0.8ex, "d_0"] \arrow[r, shift right=0.8ex, "d_1"'] \arrow[d] & B_0(n+1) \arrow[d] \\
	(T^n\times S^1)\otimes\Ff \arrow[r, shift left=0.8ex, "id"] \arrow[r, shift right=0.8ex, "id"'] & (T^n\times S^1)\otimes\Ff,
	\end{tikzcd}\]
	which implies that $i_1(\alpha)$ is a constant path by naturality. By Lemma \ref{lem:l=r}, we see that $i_1(\beta)$ is the circle induced by the natural transformation
	\[\begin{tikzcd}[column sep=large, row sep=large]
	* \arrow[r, "a"] \arrow[d, "g"'] \arrow[d] & * \arrow[d, "g"] \\
	* \arrow[r, "a"] & *
	\end{tikzcd}\]
	of functors $a: i_{\Zz^n}\Rightarrow i_{\Zz^n}$, where $i_{\Zz^n}:B(\Zz^n)\to B(\Zz^n\times\Zz)$ maps the morphism $g$ to $g$. This natural transformation can be obtained by restricting the functor
	\[B(\Zz)\times B(\Zz^n)\longrightarrow B(\Zz^n\times\Zz),\qquad (m,g)\mapsto (g,m)\]
	along the map $\Delta^1\to B(\Zz)$ which sends $0\to1$ to $1$. That is, we can identify $i_1(\beta)$ as the map
	\[(S^1_+)\otimes(T^n\otimes\Ff)\to T^{n+1}\otimes\Ff,\]
	where the first tensor product is formed in $\mathrm{Sp}$. This is the definition of $\sigma_{T^n}$. Consequently, we obtain that $i_1(\alpha*\beta)\simeq i_1(\beta)\simeq\sigma_{T^n}.$
	
	Finally, since the image of $\beta$ factors through $M$, taking the cofiber of $\mathcal{F}_0(n+1)\to\mathcal{F}_1(n+1)$ implies that $\beta$ becomes null after composing with $\pi$, where $\pi$ is the canonical map $\mathcal{F}_1(n+1)\to \mathcal{F}_1(n+1)/\mathcal{F}_0(n+1)$. Since $\mathcal{F}_1(n+1)/\mathcal{F}_0(n+1)\simeq\Sigma\mathrm{cofib}(s_0),$ we see that $\pi(\alpha*\beta)=\Sigma\bar{j}$, where $\bar{j}$ is the composition of $j$ and the canonical map $B_1(n+1)\to \mathrm{cofib}(s_0)$. That is, $(\alpha*\beta)_*(a)$ is represented by $\bar{j}_*(a)$ in $E^1$-page. Passing to $E^2$-page, we obtain that $(\alpha*\beta)_*(a)$ is represented by $1\otimes[1\otimes a]$. Hence taking the representative to be $(\alpha*\beta)_*(a)$ gives the desired result.
\end{proof}

Next, for $1\leq i\leq n+1$, we can also consider the $S^1$-action on $i$th $S^1$-factor of $T^{n+1}$. For this, consider the following map
\begin{equation*}
	\begin{split}
		& S^1\simeq \mathrm{Map}_{\mathcal{S}}(*,S^1) \xrightarrow{-\times T^{n+1}}\mathrm{Map}_{\mathcal{S}}(T^{n+1},S^1\times T^{n+1})\xrightarrow{\tau_i\circ-} \mathrm{Map}_{\mathcal{S}}(T^{n+1},T^{n+2}) \\
		& \xrightarrow{(id_{T^{i-1}}\times m_{S^1}\times id_{T^{n+1-i}})\circ-}\mathrm{Map}_{\mathcal{S}}(T^{n+1},T^{n+1})\xrightarrow{-\otimes\Ff} \mathrm{Map}_{\mathrm{CAlg}(\mathrm{Sp})}(T^{n+1}\otimes\Ff,T^{n+1}\otimes\Ff)
	\end{split}
\end{equation*}
where $\tau_i$ places the first $S^1$-factor to the $i$th $S^1$-factor. Taking its adjoint and using the splitting $\Sigma^{\infty}_+S^1\simeq \Sigma\Ss\oplus\Ss$ give a map $\sigma_{n+1,i}\colon \Sigma(T^{n+1}\otimes\Ff)\to T^{n+1}\otimes\Ff$; compare with \cite[Proposition 2.7]{Vee18}.  Consequently, we obtain a map $\pi_*(T^{n+1}\otimes\Ff)\to\pi_{*+1}(T^{n+1}\otimes\Ff)$, which is still denoted by $\sigma_{n+1,i}$. Note that, for $i\in V\subseteq I_{n+1}$, we have the following commutative diagram
	\[\begin{tikzcd}[column sep=large, row sep=large]
	\Sigma(T^V\otimes\Ff) \arrow[r, "\sigma_{V,i}"] \arrow[d] & T^V\otimes\Ff \arrow[d] \\
	\Sigma(T^{n+1}\otimes\Ff) \arrow[r, "\sigma_{n+1,i}"'] & T^{n+1}\otimes\Ff.
	\end{tikzcd}\]
Write $q_S\colon C_{S}\to B_{S}$ for the map collapsing $(|S|-1)$-skeleton. We need the following lemma to resolve multiplicative extensions. For $1\leq i\leq n+1$, the inclusion $\{i\}\hookrightarrow I_{n+1}$ induces a map $A_{\{i\}}\to A_{n+1}$, and the image of $\mu$ under this map is denoted by $\mu_{i}$. We also use the same notation to denote the class represented by $\mu_i$ in the spectral sequence.

\begin{lem}\label{lem:suspn+1}
	Assume that $A_{n+1}$ satisfies the assumption in Lemma \ref{lem:pdesc}. If $y\in \Cp$ satisfies $\sigma_{n+1,i}(y)=0$ for $ i\in I_{n+1}$, then $y\in P_{I_{n+1}}^p.$
\end{lem}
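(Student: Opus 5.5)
We argue by induction on the number of circles, using that $\Cp=\bigotimes_{\varnothing\ne U\subsetneq I_{n+1}}B_U$ is, by the inductive hypothesis and Remark \ref{rem:comp}, identified with a subalgebra of $A_{n+1}$ generated by the images of the $A_V$ with $V$ proper. Then $\sigma_{n+1,i}$ restricted to $A_V$ with $i\in V$ is $\sigma_{V,i}$ by the commutative square preceding the lemma. If $i\notin V$, the rotation acts trivially on $A_V$, so $\sigma_{n+1,i}$ kills the image of $A_V$.

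Next we describe $\sigma_{n+1,i}$ as a derivation. The circle action $S^1_+\otimes(T^{n+1}\otimes\Ff)\to T^{n+1}\otimes\Ff$ is a map of $\Ee_\infty$-rings, so $\sigma_{n+1,i}$ is a derivation on $A_{n+1}$ with $\sigma_{n+1,i}(x^p)=px^{p-1}\sigma(x)=0$. On a generator of a factor $B_U$ with $i\in U$, write $U=U'\cup\{i\}$ and identify $T^U\otimes\Ff\simeq\THH(T^{U'}\otimes\Ff)$ along the $i$th circle. Then $\sigma_{U,i}$ is the suspension $\sigma_{T^{U'}}$ followed by $\sigma_{U,i}$ on the $\THH$-direction. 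By Lemma \ref{lem:susp} and Lemma \ref{lem:HH(A)}, under $E^2\cong\Ff[\mu]\otimes\mathrm{HH}(A_{U'})\cong A_{U'}\otimes A_{U'}$-type description, $\sigma$ sends an algebra generator $x\in B_{U'}$ (sitting in $A_{U'}$) to $1\otimes[x]$, i.e.\ to the class $[x]\in B_U$ from Remark \ref{rem:phix}, which is $\varrho x$ or $\varrho^0 x$. Modulo higher filtration, $\sigma_{n+1,i}$ then acts on $\mathrm{gr}\,\Cp$ as the derivation sending each generator $x$ of $B_{U'}$ (with $i\notin U'$, $U'\cup\{i\}$ proper) to $[x]\in B_{U'\cup\{i\}}$, and killing factors $B_U$ with $i\in U$.

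Then we do the Koszul-type computation. Write $\Cp$ as a tensor product over pairs $(U',i)$ and compute the common kernel of these derivations $\delta_i$ on $\mathrm{gr}$. For a single factor, the kernel of $\delta\colon B_{U'}\otimes B_{U'\cup\{i\}}\to$ itself, with $\delta(x)=[x]$ on generators, is computed as in de~Rham/Cartier: an odd generator $x$ pairs with $\varrho^0x=\gamma_1$ of a divided power, and an even truncated generator $x$ with $x^p=0$ pairs with the exterior $\varrho x$. The kernel on the $B_{U'}$ part is spanned by products involving only $p$-th powers. In $B_{U'}$ with $|U'|\ge2$ all positive-degree elements are $p$-nilpotent, so only the constants survive, while for $U'=\{j\}$ we get $\mu_j^p$. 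Elements of factors $B_U$ with $|U|\ge2$ lie in the image of some $\delta_i$ (they are $\varrho$/$\varrho^0$ classes or divided powers), and are not cycles for $\delta_j$ for some other $j\in U$ (collapse to $U\setminus\{j\}$). Intersecting over all $i\in I_{n+1}$ should leave exactly $P(\mu_1^p,\dots,\mu_{n+1}^p)=P_{I_{n+1}}^p$. Then we lift from $\mathrm{gr}$ to $A_{n+1}$ by filtration induction: if $y$ has leading term in $P^p_{I_{n+1}}$, subtract it (those are genuine $\sigma$-cycles in $A_{n+1}$) and recurse.

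The main obstacle is this kernel computation: the $\varphi^k$ classes are not in the image of $\sigma$ (Remark \ref{rem:phix}), so we must check that divided powers like $\gamma_{p}(\varrho^0x)$ and $\varphi^0$-words are not killed by all $\delta_j$. We would try first to handle this by using $\delta_j$ for $j\in U$ different from the last circle, via the symmetry of $T^U$ permuting coordinates, and reducing to the known kernel of the de~Rham differential on $\Gamma\otimes\Lambda$.
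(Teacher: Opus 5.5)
Your sketch has a genuine gap: the central kernel computation is misstated, not merely unproved.

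The first claim is backwards. For $i\notin V$ the $i$th rotation sweeps $T^V$ out onto $T^{V\cup\{i\}}$. So on the image of $A_V$ the operator $\sigma_{n+1,i}$ is the suspension $A_V\to A_{V\cup\{i\}}$; for example, $\sigma_{n+1,i}(\mu_j)$ is a nonzero multiple of $x_{\{i,j\},\varrho\mu}$ for $j\neq i$. It is for $i\in V$ that $\sigma_{n+1,i}$ stays inside $A_V$, and your second paragraph silently switches to this correct description.

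More seriously, the ``Koszul/Cartier'' computation does not assemble. $\Cp$ is not a tensor product of pairs $B_{U'}\otimes B_{U'\cup\{i\}}$, since each $B_U$ is the target of $|U|$ of the operators and the source of the other $n+1-|U|$. The kernel of a single $\delta_i$ is also not spanned by $p$th powers. Besides all of $B_U$ for $i\in U$, it contains mixed Cartier-type cycles such as $x^{p-1}\varrho x$ for even $x$ and $x\,\gamma_{p-1}(\varrho^0x)$ for odd $x$, and these are not in $P^p_{I_{n+1}}$.

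Your test for a factor $B_U$, ``not cycles for $\delta_j$ for some other $j\in U$'', fails by your own description, because $\delta_j$ kills $B_U$ when $j\in U$. Lying in the image of $\delta_i$ makes an element a $\delta_i$-cycle, not the opposite. The operator that detects $B_U$ is $\delta_j$ with $j\notin U$, which exists because $U$ is proper. It sends every algebra generator $u$ of $B_U$, including $\varphi^k$-words and $\gamma_{p^k}(w)$, to the nonzero generator $[u]\in B_{U\cup\{j\}}$, injectively on generators (Remark~\ref{rem:phix}). So the ``main obstacle'' you name is a red herring: whether $\varphi^k$-classes lie in the image of $\sigma$ does not matter.

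The missing idea is an ordering that excludes the mixed cycles. The paper proceeds as follows.
\begin{enumerate}
\item Write $y=\sum_c f_c c$ in the free $P^p_{I_{n+1}}$-basis of monomials in the generators $x_{V,v}$, with exponents $<p$ (resp.\ $\le1$).
\item Choose a generator $x=x_{U,u}$ occurring in some $c\neq1$ with $|U|$ maximal, and then $|u|$ maximal among those.
\item Take $i\notin U$ and read off the coefficient of $x_{U\cup\{i\},\sigma_{B_U}(u)}$ in $\sigma_{n+1,i}(y)$.
\end{enumerate}
Maximality of $|U|$ guarantees that no $U\cup\{i\}$-factor occurs in $y$, so no Cartier-type cycle can interfere. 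It also ensures that only $U$-factor generators of degree $|u|$ can contribute. Injectivity of $\sigma_{B_U}$ on generators then leaves only $x$. Since $\sigma_{n+1,i}$ is a derivation killing $P^p_{I_{n+1}}$, the coefficient is $\pm e_{U,u}(c)f_c\,c/x\neq0$.

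Your sketch also misses two further points. First, when $U\cup\{i\}=I_{n+1}$ the target leaves $\Cp$. The lemma's hypothesis, the free $\Cp$-basis $\{\widetilde v\}$ of $A_{n+1}$, is exactly what lets one extract the coefficient of $\widetilde x_{I_{n+1},\sigma_{B_U}(u)}$, and you never use it. Second, your reduction ``modulo higher filtration'' to $\mathrm{gr}\,\Cp$ names no filtration compatible with all $n+1$ operators at once. The paper's coefficient extraction works directly in $A_{n+1}$ and needs no such reduction.
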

\begin{proof}
	For each $\varnothing\ne V\subseteq I_{n+1}$, let $x_{V,v}$ be the element of $\Cc$ whose $V$-factor is $v\in B_V$ and other factors are $1$. Here $v$ is $\mu$, if $|V|=1$. If $|V|\geq 2$, then $v$ is either a monic word of length $|V|$ and odd degree or $\gamma_{p^j}(w)$, where $w$ is a monic word of  length $|V|$ and even degree and $j\geq 0$. Note that these are precisely the minimal variables defined in Section \ref{sec2}. In particular, $|x_{V,v}|=|v|$. Now, as an $P_{I_{n+1}}^p$-module, $C'_{I_{n+1}}$ is free with basis
	\[\mathcal{C}=\left\{1\ ,\ c=\prod_{\substack{(V,v\in B_V)\\ \varnothing\neq V\subsetneq I_{n+1}}}x_{V,v}^{e_{V,v}(c)} \ \ |\ \ e_{V,v}(c)\in
	\begin{cases}
	\{0,\ldots,p-1\},&|x_{V,v}|\ \text{even},\\
	\{0,1\},&|x_{V,v}|\ \text{odd}
	\end{cases}\right\}.\]
	Hence we can write $y$ as $\sum_{c\in\mathcal{C}} f_c c$ with $f_c\in P_{I_{n+1}}^p$, where only finitely many $f_c$ are nonzero. Suppose that $y\notin P_{I_{n+1}}^p$. Then $f_c\ne0$ for some $c\ne1$. Among all generators $x_{V,v}$ occurring with positive exponent in $c\neq1$ with $f_c\ne0$, choose $x=x_{U,u}$ such that 
	\[|U|=\max_{\substack{f_c\ne0\\e_{V,v}(c)>0}}|V|,\qquad |x|=\max_{\substack{f_c\ne0,\ e_{V,v}(c)>0\\|V|=|U|}}|x_{V,v}|.\]
	Since $U\subsetneq I_{n+1}$, there is an $i\in I_{n+1}\setminus U$. Set $W=U\cup\{i\}$, then $|W|=|U|+1>|U|\ge1.$ Hence, by the maximality of $|U|$, neither $c$ occurring in $y$ nor its coefficient $f_c$ contains a positive-degree element in the factor indexed by $W$.
	
	Now, since $i\notin U$, we have that
	\begin{equation}\label{eq:u}
		q_W\circ\sigma_{n+1,i}(x)=\pm\sigma_{B_U}\circ q_U(x)=\pm\sigma_{B_U}(u),
	\end{equation}
	where we consider $x_{U,u}$ as an element of $C_W$. When $W\subsetneq I_{n+1}$, we have that
	\[\Cp\cong \left(\bigotimes_{\substack{\varnothing\ne V\subsetneq I_{n+1}\\ V\neq W}}B_V\right)\otimes B_W.\]
	Hence, we can consider the coefficient of $x_{W,\sigma_{B_U}(u)}$ in $\sigma_{n+1,i}(y)$ as an element in $\left(\bigotimes_{\substack{\varnothing\ne V\subsetneq I_{n+1}\\ V\neq W}}B_V\right)$, since $|U|+1<n+1$. If $W=I_{n+1}$, then 
	\[A_{n+1}\cong \bigoplus_v \Cp\widetilde{v}\]
	by the assumption. Hence we can consider the coefficient of $\widetilde{x}_{W,\sigma_{B_U}(u)}$ as an element in $\Cp$. Since applying $\sigma_{n+1,i}$ to a $V$-indexed variable in $c$ gives a $(V\cup\{i\})$-indexed class by definition, it can contribute to the coefficient of $x_{W,\sigma_{B_U}(u)}$ only if $W\subseteq V\cup\{i\}$, which implies $V=U$ by the choice of $U$. Moreover, only elements of degree $|x|$ can contribute and the coefficient lies in $\Ff$, by the choice of $x$ and the equation $|x_{W,\sigma_{B_U(u)}}|=|x|+1$. Choose such an $x^{\prime}$. By Equation \eqref{eq:u}, $\sigma_{B_U}(x^{\prime})=\sigma_{B_U}(x)$. Hence Remark \ref{rem:phix} implies that only $x$ can contribute. Since the suspension map is a derivation by \cite[Proposition 5.10]{AR05}, it kills $P_{I_{n+1}}^p$. Therefore, we have that 
	\[\sigma_{n+1,i}(y)=\sum_{c\in\mathcal{C}}f_c\sigma_{n+1,i}(c).\]
	For a $c$ containing $x$, a direct computation implies that the coefficient of $x_{W,\sigma_{B_U}}(u)$ or $\widetilde{x}_{W,\sigma_{B_U}(u)}$ is $\pm e_{U,u}(c)f_c\frac{c}{x}$, which is nonzero since $1\leq e_{U,u}(c)\leq p-1$. If there is only a single $c$ contains $x$, then $\sigma_{n+1,i}(y)\neq 0$, a contradiction. Otherwise, since $c\in\mathcal{C}$ are basis monomials,
	\[\{\frac{c}{x}\ |\ c\in\mathcal{C}\ \mathrm{contains}\ x\}\]
	are linearly independet over $P_{I_{n+1}}^p$. Since $1\leq e_{U,u}(c)\leq p-1$, the coefficient of $x_{W,\sigma_{B_U}(u)}$ is nonzero, which implies that $\sigma_{n+1,i}(y)\neq 0$, a contradiction.
\end{proof}

Finally, recall that there is an isomorphism $E^2_{*,*}(n+1)\cong \Ff[\mu]\otimes \mathrm{HH}_{*,*}(A_n)$ as Equation \eqref{eq:E2}. Hence we can consider Connes' operator on $E^2$-page.
\begin{prop}\label{prop:connes}
	For $r\geq2$, there are operations $\delta_{n+1}^r:E^r_{s,t}(n+1)\longrightarrow E^r_{s+1,t}(n+1)$ such that
	\[d^r\delta^r_{n+1}=(-1)^r\delta^r_{n+1}d^r,\qquad \delta^2_{n+1}=id_{\Ff[\mu]}\otimes B,\]
	and $\delta^r_{n+1}$ induces $\delta^{r+1}_{n+1}$ on $E^{r+1}$-page, where $B$ is Connes' operator on $\mathrm{HH}_{*,*}(A_n)$. Moreover, under the identification $E^2_{*,*}(n+1)\cong \Cc$, we have that
	\[\delta^2_{n+1}(x_{U,u})=
	\begin{cases}
	x_{U\cup\{n+1\},\,\sigma_{B_U}(u)}, & n+1\notin U,\\[2mm]
	0,& n+1\in U,
	\end{cases}\]
	for $\varnothing\neq U\subseteq I_{n+1}$ and every algebra generator $u$ of $B_U$.
\end{prop}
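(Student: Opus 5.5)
We construct $\delta^r_{n+1}$ at the level of the simplicial object $B_\bullet(n+1)$, by making it cyclic. By Lemma \ref{lem:l=r}, $r\simeq c\circ l$. We can therefore regard $B_\bullet(n+1)$ as $M\otimes_{(T^n\otimes\Ff)^{\otimes 2}}B_\bullet(T^n\otimes\Ff,T^n\otimes\Ff,T^n\otimes\Ff)$, whose two-sided structure is twisted by the automorphism induced by $c$. Up to equivalence this is a cyclic bar construction $M\otimes_{A}\mathrm{cyc}_\bullet(A)$ with $A=T^n\otimes\Ff$, with coefficients in $M$. Concretely, the cyclic operator $t_k$ rotates the $T^n\otimes\Ff$ factors of $B_k(n+1)$ and moves the last factor into $M$ through the conjugation $c$. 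The relation $t_k^{k+1}=c_*$ on coefficients becomes homotopic to the identity after restricting along $l$, by Lemma \ref{lem:l=r}. This gives a cyclic (more precisely, a homotopy-coherent paracyclic whose twist is homotopic to the identity) structure on $B_\bullet(n+1)$.

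Given this, Connes' operator $B=(1-t)sN$ is defined on the normalized chains of the simplicial spectrum. Since it is built from face, degeneracy and cyclic operators, it lifts to a map of filtered spectra $\Sigma\mathcal F_s(n+1)\to\mathcal F_{s+1}(n+1)$ compatible with the skeletal filtration. A filtration-shifting map of filtered objects of degree $(+1,0)$ induces maps $\delta^r$ on every page that commute with $d^r$ up to the sign $(-1)^r$, coming from the suspension coordinate, and each $\delta^r$ induces $\delta^{r+1}$ on homology. On $E^1$ the $M$-coefficients only enter through $\pi_*M\cong A_n\otimes\Ff[\mu]$, with $\mu$ inert. Hence $\delta^1$ is $id_{\Ff[\mu]}\otimes B$ on the Hochschild complex, and $\delta^2=id_{\Ff[\mu]}\otimes B$ on $\mathrm{HH}_{*,*}(A_n)$ via Equation \eqref{eq:E2}. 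Equivalently, one can use the $S^1$-action $\sigma_{n+1,n+1}$ on the last circle, which is the realization of the cyclic structure, and identify its filtration-raising part with $B$. I expect this coherence step to be the \emph{main obstacle}: producing the homotopy $t^{k+1}\simeq id$ coherently from Lemma \ref{lem:l=r}. If that fails, I would fall back on the $D_\bullet$-type replacement from Proposition \ref{prop:pinch} to rigidify.

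For the formula, we use Lemma \ref{lem:HH(A)} together with the inductive identification $A_n\cong C_{I_n}$. Case $n+1\in U$: a generator $x_{U,u}$ with $U=V\cup\{n+1\}$ lies in $\mathrm{Tor}^{A_n}(\Ff,\Ff)$, or is $\mu$ when $U=\{n+1\}$. By Lemma \ref{lem:HH(A)} it is represented by a class of the form $[1|a_1|\cdots]$ in the reduced part. Applying $B$ to a cycle already starting with $1$ gives terms with two units, which vanish in the normalized complex, so $\delta^2(x_{U,u})=0$. Case $n+1\notin U$: here $x_{U,u}=u\in A_n$ sits in Hochschild degree $0$, and $B(u)=[1|u]$. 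By Lemma \ref{lem:HH(A)} we have $f([1|u])=1\otimes[u]$, which under Equation \eqref{eq:torbn} is the class $\sigma_{B_U}(u)$ in the $B_{U\cup\{n+1\}}$-factor, by Remark \ref{rem:phix}; this is $x_{U\cup\{n+1\},\sigma_{B_U}(u)}$. Finally, we check compatibility with the tensor factorization: $B$ is a derivation on $\mathrm{HH}$ of a commutative algebra, so it suffices to treat generators of each single-variable factor. Lemma \ref{lem:HH(A)} is applied factorwise, with the K\"unneth isomorphism for $\mathrm{HH}$ of tensor products.
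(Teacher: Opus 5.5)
\textbf{Gap 1: the construction of $\delta^r$.} This is the first half of the statement, and it is not established. Lemma~\ref{lem:l=r} only gives $l_*=r_*$ on homotopy groups. That is enough to identify $E^1$ and $E^2$, but it cannot produce an operator compatible with the higher differentials.

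What you actually have is a paracyclic structure whose twist $T=t^{k+1}$ is the conjugation $c$. A paracyclic object with $T$ merely homotopic to the identity is not cyclic. For paracyclic operators, Connes' formula satisfies $bB+Bb=1-T$, not $0$. So $B=(1-t)sN$ is not a chain map of the total complex. It therefore gives no map $\Sigma\mathcal F_s(n+1)\to\mathcal F_{s+1}(n+1)$, and nothing on $E^r$ for $r\ge 3$.

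Trivializing the twist $\infty$-categorically also fails. The homotopy $c\simeq\mathrm{id}$ is unpointed, so it is not a homotopy of $\Ff$-algebra maps. And a single path $T\simeq\mathrm{id}$ is far from a cyclic structure. The $S^1$-action alternative reduces to the same problem, since nothing shows it carries $\Sigma F_s$ into $F_{s+1}$ for this model.

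You flag this as the main obstacle but leave it open. The fallback via $D_\bullet$ does not help, because $D_\bullet$ rigidifies the pinch map, not the twist.

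The missing idea is that no coherent cyclic structure is needed. You only need a null-homotopy of $1-T$ that commutes with the bar-direction face maps and does not raise the bar filtration. The paper obtains it as follows:
\begin{enumerate}
\item Write $B_\bullet(n+1)$ as the classifying space of the paracyclic simplicial group $G_i=\langle a\rangle*(\Zz^n)^{*(i+1)}$, with $F_i^{i+1}=c_a$.
\item Use May's simplicial homotopy $h_j$ from $c_a$ to the identity on the nerve. It is natural in $G_i$ because all faces fix $a$, so the resulting $H$ commutes with $b$.
\item Set $\mathcal B=(-1)^tB_{G_\bullet}-H$. This anticommutes with the total differential on the nose and raises filtration by at most one.
\item Since $H(F_s)\subseteq F_s$, $H$ is invisible on every page, which yields $\delta^2=\mathrm{id}\otimes B$.
\end{enumerate}

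\textbf{Gap 2: the formula for $n+1\in U$.} You claim that every $\mathrm{Tor}$-generator is represented by a normalized Hochschild cycle beginning with $1$. This holds for the $\varrho$- and $\varrho^k$-classes, via $[1|w]$ and $[1|w|\cdots|w]$. It fails for the transpotence classes $\varphi^k w$.

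Take $P_p(x)$ with $|x|$ even. A chain $\sum_i c_i[1|x^i|x^{p-i}]$ is a cycle iff $c_i=-c_{p-i}$. Its image under $p_*$ is $(\sum_i c_i)$ times the generator of $\mathrm{Tor}_{2,p|x|}$, since all $[x^i|x^{p-i}]$ are homologous. With $p$ odd this sum is $0$. Yet $p_*$ must send the class to $\varphi^0x\ne0$ by Lemma~\ref{lem:HH(A)}. Concretely, for $p=3$ the class is represented, up to a unit, by $1\otimes x\otimes x^2+x\otimes x\otimes x$, and by nothing of your form.

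The vanishing still holds by a bidegree count. $B(\gamma_{p^k}(\varphi^0x))$ would lie in bidegree $(2p^k+1,p^{k+1}|x|)$ of $P_p(x)\otimes\Lambda(\varrho x)\otimes\Gamma(\varphi^0x)$, and that bidegree is zero. Naturality and the K\"unneth compatibility of $B$ then finish this case.
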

\begin{proof}
	Recall that $E^r_{*,*}(n+1)$ is induced by applying $-\otimes\Ff$ to the bar construction 
	\[(T^n\vee S^1)\cup_{T^n\vee T^n}B_{\bullet}(T^n,T^n,T^n)\]
	This bar construction can be obtained by applying the classifying spaces functor to the simlpicial group $G_{\bullet}$, where $G_i=\langle a\rangle*(\Zz^n)^{*(i+1)}$, $d_0$ is induced by $g_1\mapsto g_0$, $d_i$ is induced by $g_i\mapsto ag_0a^{-1}$, and other faces and degeneracies are induced by identifying the adjacent factors and inserting factors. Next, for $i\geq0$, define $F_i$ by $F_i(a)=a$ and
	\[F_i(g_j)=\begin{cases}
	g_{j+1},&0\leq j<i,\\
	ag_0a^{-1},& j=i.
	\end{cases}\]
	In particular, $F_0=c_a$, the conjugation by $a$. A direct computation implies that
	\[d_0F_i=d_i,\qquad s_0F_i=F_{i+1}^2s_i,\]
	\[d_jF_i=F_{i-1}d_{j-1},\qquad s_jF_i=F_{i+1}s_{j-1},\]
	for $1\le j\le i$, and $F^{i+1}_i=c_a$, where $c_a$ is the conjugation by $a$.
	
	We use these maps to define Connes' operator for $E^r_{*,*}(n+1)$. First, note that the following maps
	\[h_j(x_1,\cdots,x_k)=(c_a(x_1),\cdots,c_a(x_j),a,x_{j+1},\cdots,x_k)\] 
	for $0\leq j\leq k$, give a homotopy from $c_a:B_{\bullet}(G_i)\to B_{\bullet}(G_i)$ to the identity, by \cite[Definition 5.1]{May68}. Note that maps $h_j$ do not preserve base point, so $h_j\otimes\Ff$ is not an $\Ff$-algebra map. Hence, to write the chain homotopy explicitly, we use the functor $(B_{\bullet}(-)\otimes\Ff)\otimes\Ff$ to pass to the category of $\Ff$-algebras.  Also note that, since $F_i$ are pointed for $i\geq0$, $B_{\bullet}(F_i)\otimes\Ff$ is a map of $\Ff$-algebras, and similarly for face maps and degeneracy maps. By \cite[Theorem 7.1.2.13]{HA}, we can identify $(B_k(G_i)\otimes\Ff)\otimes\Ff$ as a chain complex $(C_{k,i,l},\partial_l)$. We still use $d_j$, $s_j$, $h_j$, $c_a$ and $F_i$ to denote the corresponding maps of chain complexes. Set $b_i=\sum_{j=0}^i(-1)^jd_j$, $s^{\prime}=F_{i+1}\circ s_i$ and 
	\[B_{G_i}=(1+(-1)^iF_{i+1})\circ s^{\prime}\circ\sum_{j=0}^i(-1)^{ij}F_i^j\colon C_{k,i,l}\to C_{k,i+1,l};\]
	see \cite[\S4,5]{Pon20} for a more detailed discussion. A direct computation implies that
	\begin{equation}\label{eq:bB}
		b\circ B_{G_{\bullet}}+B_{G_{\bullet}}\circ b=1-c_a. 
	\end{equation}
	The degeneracy relations imply that $B_{G_{\bullet}}$ descends to the normalized bar complex, whose bar-degree filtration models the skeletal filtration of the geometric realization. Next, set $b^{bar}_k=\sum_{j=0}^k(-1)^j d^{bar}_j,$ where $d^{bar}_j$ is the face map of $B_k(G_i)$ for $0\leq j\leq k$. Also, in degree $(k,i,l)$, set 
	\[d^{int}_t=\partial_l+(-1)^lb^{bar}_k,\qquad H_t=(-1)^l\sum_{j=0}^k(-1)^jh_j,\]
	where we combine the degrees $k$ and $l$ into $t=k+l$. Hence both $d^{int}$ and $H$ preserve $i$, while decreasing and increasing $t$ by one, respectively. Again, a direct computation implies that
	\begin{equation}\label{eq:dH}
		d^{int}\circ H+H\circ d^{int}=1-c_a.
	\end{equation}
	Since $h_j$ commutes with face maps for $0\leq j\leq k$, $H$ also descends to the normalized bar complex, and we have that $b\circ H=H\circ b$. Moreover, the maps defining $B_{G_{\bullet}}$ commutes with $d^{bar}_j$ and $\partial_l$, so $d^{int}\circ B_{G_{\bullet}}=B_{G_{\bullet}}\circ d^{int}$. Now, combining the degrees $k$ and $l$, the differential and the required operator is defined to be
	\[d:=d^{int}+(-1)^tb,\qquad \mathcal{B}:=(-1)^t\circ B_{G_{\bullet}}-H.\]
	The proceeding computation then implies that
	\[d\circ \mathcal{B}(x)=(-1)^t d^{int}\circ \mathcal{B}_{G_{\bullet}}(x)+b\circ \mathcal{B}_{G_{\bullet}}(x)-d^{int}\circ Hx-(-1)^{t+1}b\circ H(x),\]
	and
	\[\mathcal{B}\circ d(x)=(-1)^{t-1}\mathcal{B}_{G_{\bullet}}\circ d^{int}(x)-H\circ d^{int}(x)+\mathcal{B}_{G_{\bullet}}\circ b(x)-(-1)^tH\circ b(x).\]
	Therefore, $d\circ \mathcal{B}+\mathcal{B}\circ d=(b\circ \mathcal{B}_{G_{\bullet}}+\mathcal{B}_{G_{\bullet}}\circ b)-(d^{int}\circ H+H\circ d^{int})=0$ by Equation \eqref{eq:bB} and Equation \eqref{eq:dH}.
	
	Now, the filtration $F_s(n+1)$ is obtained by taking the realization of the bar complex restricted to degrees $i\leq s$. Since $B_{G_{\bullet}}$ increases $i$ by one and preserves $k$ and $l$, while $H$ increases $k$ by one and presserves $l$ and $i$, $\mathcal{B}$ increases filtration by at most one and total degree by one. Write $\mathcal{B}^r$ for the induced maps on the spectral sequence for $F_s(n+1)\otimes\Ff$. The unit map $\Ss\to\Ff$ and the $\Ff$-module structure on $F_s(n+1)$ give maps
	\[F_s(n+1)\xrightarrow{\eta_s} F_s(n+1)\otimes\Ff\xrightarrow{\rho_s} F_s(n+1)\]
	such that $\rho_s\circ\eta_s=id$, for $s\geq0$. It is clear that these maps are compatible with inclusions $F_s(n+1)\to F_{s+1}(n+1)$ for $s\geq0$. Hence they induce maps on spectral sequences. Now, define the operation from $E^r_{s,t}$ to $E^r_{s+1,t}$ by
	\[\delta^r_{n+1}:=(-1)^t\rho^r\mathcal B^r\eta^r.\]
	Note that $\delta^r_{n+1}$ is well-defined, as $H(F_s(n+1))\subseteq F_s(n+1)$. Moreover, since $d\mathcal B+\mathcal B d=0$ and $\eta^r$ and $\rho^r$ commute with the differentials, we have that
	\begin{equation*}
		\begin{split}
			 d^r\delta^r_{n+1}(x) & =(-1)^t\rho^r d^r\mathcal B^r\eta^r(x)\\
			 & =(-1)^{t+1}\rho^r\mathcal B^r\eta^r(d^r x)\\
			 &=(-1)^r\delta^r_{n+1}(d^r x)
		\end{split}
	\end{equation*}
	for $x\in E^r_{s,t}(n+1)$. In particular, $\delta^r_{n+1}$ induces $\delta^{r+1}_{n+1}$ in the next page.
	
	To identify $\delta^2_{n+1}$, we first take homology for $d^{int}$ for each fixed $i$. By Equation \eqref{eq:dH}, $c_a$ acts on this homology as the identity map. Hence, on the homotopy groups of the bar terms, $F_i$ fixes $\Ff[\mu]$ and cyclically permutes tensor factors in $A_n$. Therefore, $b$ is precisely the differential in the Hochschild complex, and $B_{G_{\bullet}}$ corresponds to Connes' opertor. Since face maps, degeneracy maps and $F_i$ are pointed, they commutes with the unit map and $\Ff$-action map. Again, since $H(F_s)\subseteq F_s$, it is killed in $E^2$-page. It follows that $\delta^2_{n+1}=id_{\Ff[\mu]}\otimes B$ by \cite[\S2.1]{Lod92}. It remains to compute Connes' operator on $E^2_{*,*}(n+1)\cong \Cc.$ If $n+1\notin U$, then we can reduce to compute Connes' operator on the algebra $A$ as in Lemma \ref{lem:HH(A)} by Equation \eqref{eq:E2}, which follows from \cite[Equation (3)]{Mat22}, \cite[Lemma 4]{Spe20} and \cite[Example 2.9]{KN18}, respectively. If $n+1\in U$, then $x_{U,u}$ comes from the $\mathrm{Tor}$-factor, and the formula follows from the same references as above. Finally, $\mu\in B_{n+1}$ is also killed, since $\delta^2_{n+1}=id_{\Ff[\mu]}\otimes B$ and $B(1)=0$.
\end{proof}

\section{Vanishing of differentials}\label{sec4}
In this section, we proof that all differentials 
\[d^r\colon E^r_{s,t}(n+1)\longrightarrow E^r_{s-r,t+r-1}(n+1)\]
vanishes for $r\geq2$. This implies that $A_{n+1}\cong \Cc$ as $\Ff$-modules. Multiplicative extensions will be treated in the next section. Recall that there is an isomorphism
\[E^2_{*,*}(n+1)\cong \Cc\cong \bigotimes_{\varnothing\neq U\subseteq I_{n+1}}B_U.\]
We consider $\Cc$ as a bigraded algebra under this isomorphism and $I_n$ as the subset of $I_{n+1}$ by omitting $n+1$. By Lemma \ref{lem:HH(A)}, an element is of filtration degree $0$ if and only if it belongs to the subalgebra $B_{\{n+1\}}\otimes\bigotimes_{\varnothing\neq U\subseteq I_n}B_U$. Moreover, for $n+1\in U$ and $|U|\geq2$, elements in $U$-factor have filtration degrees given by
\[\mathrm{fil}(\varrho w)=1,\qquad \mathrm{fil}(\varrho^iw)=p^i,\qquad \mathrm{fil}(\varphi^iw)=2p^i,\]
for $i\geq0$, where $w\in B_{U\setminus\{n+1\}}.$ Also recall that we use the symbol $x_{V,v}$ to denote the element of $\Cc\subseteq E^2_{*,*}(n+1)$ whose $V$-factor is $v\in B_V$ and other factors are $1$, as in the proof of Lemma \ref{lem:suspn+1}. For later use, we introduce the following order on nonempty proper subset of $I_{n+1}$.

\begin{Def}
	We order nonempty proper subset of $I_{n+1}$ by the following rules
	\begin{enumerate}
		\item Subsets containing $n+1$ precede those not containing $n+1$.
		\item Within each of two groups, subsets of larger cardinality precede those of smaller cardinality.
		\item Among subsets in the same group with equal cardinality, choose an arbitrary order.
	\end{enumerate}
	For a monomial $m=\prod_{\varnothing\ne V\subseteq I_{n+1}}x_{V,b_V}$ , we define its weight to be 
	\[\omega(m)=\bigl(|b_V|\bigr)_{\varnothing\ne V\subsetneq I_{n+1}},\]
	where $|b_V|$ is the total degree of $b_V$ and $|1|=0$. Consequently, we can compare monomials by the lexicographic order on their weights
\end{Def}

We also need to choose a basis of $\Cc$.
\begin{Def}
	A monomial $m=\prod_{\varnothing\ne V\subseteq I_{n+1}}x_{V,b_V}$ is called a basis monomial if 
	\begin{enumerate}
	\item when $|V|=1$, $b_V=\mu^i$ for $i\geq0$,
	\item when $|V|\geq2$, $b_V$ is either $1$ or a finite product of the chosen exterior generators and $\gamma_{p^i}^j(w)$ of the chosen divided-power factors $\Gamma(w)$ of $B_V$, for $i\geq0$ and $1\leq j\leq p-1$.
	\end{enumerate}
\end{Def}

\begin{lem}\label{lem:part}
	Let $0\neq y\in (\Cp)_{s,t}$ for some $(s,t)$ with $s+t$ odd and suppose that
	\[\Delta^2_{n+1,i}(y)=y\otimes1+1\otimes y,\qquad \delta^2_{n+1}(y)=0\]
	for $1\leq i\leq n+1$. Among basis monomials occurring in $y$, one of maximal weight has the form
	\[m=\prod_{V\in \Pi}x_{V,b_V},\]
	where $\Pi$ is a partition of $I_{n+1}$ into nonempty proper subsets and every $b_V$ is primitive as an element in $B_V$. The unique subset $U$ containing $n+1$ satisfies $|U|\geq2$. In particular, $b_U$ begins with $\varrho$ or $\varrho^0$, and $\mathrm{fil}(y)=1$. Moreover, in every basis monomial occurring in $y$, the factor $x_{V,b_V}$ equals $1$ whenever $U\subsetneq V\subsetneq I_{n+1}$.
\end{lem}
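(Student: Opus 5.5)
Fix a basis monomial $m$ of maximal weight among those occurring in $y$, with nonzero coefficient. The first task is to show that the support $\{V : b_V\neq1\}$ of $m$ consists of pairwise disjoint sets and that each $b_V$ is primitive. Both facts come from the primitivity condition $\Delta^2_{n+1,i}(y)=y\otimes1+1\otimes y$, read on leading terms. By Proposition \ref{prop:pinch}, $\Delta^2_{n+1,i}$ acts on a factor $x_{V,b_V}$ with $i\in V$ through the Hopf coproduct of $B_V$ (Lemma \ref{lem:prop of delta}(3),(4) together with Proposition \ref{prop:hopfbn}). On factors with $i\notin V$ it acts diagonally, by Lemma \ref{lem:prop of delta}(3). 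Suppose two support sets $V,V'$ of $m$ both contain $i$, or suppose some $b_V$ with $i\in V$ is not primitive. Then $\Delta^2_{n+1,i}(m)$ contains a mixed term $m'\otimes m''$ with $m',m''\neq1$. Choose the splitting that keeps the lexicographically largest possible weight on the left. Because the weight order is lexicographic and $m$ is maximal, no other monomial of $y$ can produce a term cancelling this one. This contradicts primitivity. Hence each $i$ lies in at most one support set, and each $b_V$ is primitive.

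The next step is to show that the support sets cover $I_{n+1}$, so that they form a partition $\Pi$. Suppose instead that some $i$ lies in no support set. I would then exploit Lemma \ref{lem:pdesc} and the suspension. Since $m$ avoids $i$, $\Delta_{n+1,i}$ sends $m$ to its diagonal image, and primitivity of $y$ together with $\rho_\alpha$ from Lemma \ref{lem:prop of delta}(2) forces $m$ to lie in $C_{I_{n+1}\setminus\{i\}}\otimes$(something primitive). That is only possible if $m$ is a single primitive factor $x_{V,b_V}$ with $i\notin V$.

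I would then use $\delta^2_{n+1}(y)=0$. By Proposition \ref{prop:connes}, $\delta^2_{n+1}$ is a derivation sending $x_{V,v}$ to $x_{V\cup\{n+1\},\sigma(v)}$ when $n+1\notin V$, and killing factors with $n+1\in V$. If $n+1$ lies in no support set, applying $\delta^2_{n+1}$ to the maximal monomial yields the leading term $x_{V\cup\{n+1\},\sigma_{B_V}(b_V)}\cdot(\text{rest})$. This term is nonzero by Remark \ref{rem:phix}, since primitive odd/even generators suspend nontrivially. It cannot cancel against the images of lower-weight monomials, again by the ordering, in which subsets containing $n+1$ come first. Hence $n+1$ lies in some support set $U$.

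For $|U|\ge2$: if $U=\{n+1\}$, then $b_U=\mu^j$, and this is primitive only for $j=1$ or a $p$-th power. The total degree $s+t$ is odd, so some other factor has odd degree. Combining this with the suspension argument in the style of Lemma \ref{lem:suspn+1}, applied to the factors not containing $n+1$, gives a contradiction. For covering, I would argue similarly: an uncovered $i$ gives the pinch $\Delta_{n+1,i}$ a degenerate form, and Lemma \ref{lem:pdesc} localizes $y$. I expect this covering step to be the main obstacle, and I would first try to push it through using the partition structure already obtained together with the parity of $s+t$.

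The remaining claims follow from the filtration conventions. A primitive element of $B_U$ with $n+1\in U$ and $|U|\ge2$ is a single word. Filtration $1$ or $p^0=1$ corresponds to $\varrho$ or $\varrho^0$. The alternative $\varphi^0$ is excluded because $\varphi$-classes do not survive $\delta^2_{n+1}(y)=0$: they are not in the image of $\sigma$ (Remark \ref{rem:phix}) and would appear in $\delta$ of nothing. Degree parity confirms this. Since the factors avoiding $n+1$ have filtration $0$, we get $\mathrm{fil}(y)=1$. Finally, suppose some monomial of $y$ had $b_V\neq1$ with $U\subsetneq V\subsetneq I_{n+1}$. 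Then $V$ precedes $U$ in the order, since it contains $n+1$ and is larger. That monomial would then have weight exceeding that of $m$, contradicting the maximality of $m$.
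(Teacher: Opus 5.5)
Your proposal has a genuine gap at the step you flag yourself: showing that the supports of a top-weight monomial cover $I_{n+1}$. The deduction you sketch there is wrong. You conclude that an uncovered $i$ forces $m$ to be a single primitive factor $x_{V,b_V}$ with $i\notin V$. But no nonzero element of $C_{I_{n+1}\setminus\{i\}}$ is $\Delta^2_{n+1,i}$-primitive, because Lemma~\ref{lem:prop of delta}(3) gives $\Delta^2_{n+1,i}(a)=a\otimes1=1\otimes a$ for such $a$. That observation is the missing idea, and it settles covering for every $i$ at once; this includes $i=n+1$, so your separate $\delta^2_{n+1}$ argument is not needed. Let $z\neq0$ be the sum of the terms of $y$ of maximal weight. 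All of them have the same support, so if $i$ is uncovered then $z\in C_{I_{n+1}\setminus\{i\}}$. Now $\Delta^2_{n+1,i}$ does not raise weight: on $x_{V,v}$ with $i\in V$ it equals $\Delta_{B_V}(v)$ plus an element of $\mathrm{Ker}(Q^2_{n+1,i})$, whose monomials have positive degree in some $W\subsetneq V$. Hence the top-weight component of $\Delta^2_{n+1,i}(y)=y\otimes1+1\otimes y$ reads $z\otimes1=2(z\otimes1)$, so $z=0$, a contradiction. Your disjointness step needs the same bookkeeping: work with $z$, not a single $m$. Maximality alone does not exclude cancellation among monomials of equal weight. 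What excludes it is that the projection placing the $W_0$-factor on the left and the other $i$-containing factors on the right is injective on the top-weight component.

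The exclusion of $\varphi^0$ is also not actually argued. Since $n+1\in U$, $\delta^2_{n+1}$ kills $x_{U,\varphi^0(a)}$ anyway, so ``$\varphi^0$ is not in the image of $\sigma$'' gives nothing by itself. You need the same test-monomial argument as for $U=\{n+1\}$, set up as follows.
\begin{enumerate}
\item Since $|\varphi^0(a)|$ is even and $s+t$ is odd, some $b_V$ with $V\in\Pi\setminus\{U\}$ is an odd-degree exterior generator.
\item With $W=V\cup\{n+1\}$, put $w=x_{W,\sigma_{B_V}(b_V)}\prod_{T\in\Pi\setminus\{V\}}x_{T,b_T}$.
\item Show that $m$ is the only monomial of $y$ whose $\delta^2_{n+1}$-image contains $w$. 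Remark~\ref{rem:phix} is what stops another monomial from producing the $U$-factor $\varphi^0(a)$. The derivation property, maximality of weight and Corollary~\ref{cor:QP} pin down the $V$- and $W$-factors.
\end{enumerate}
Weight order alone does not suffice here, contrary to your claim, because $\delta^2_{n+1}$ moves degree into earlier coordinates.

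For $U=\{n+1\}$, the tool is likewise $\delta^2_{n+1}$ together with the hypothesis $\delta^2_{n+1}(y)=0$. It is not Lemma~\ref{lem:suspn+1}, whose freeness hypothesis on $A_{n+1}$ is only available after the collapse. You must also apply $\delta^2_{n+1}$ to an odd-degree factor, since primitive factors $\mu_j^{p^k}$ with $k\geq1$ are killed by $\delta^2_{n+1}$.
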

\begin{proof}
	By Proposition \ref{prop:pinch}, the restriction of $\Delta^2_{n+1,i}$ to $\Cp$ takes values in
	\begin{equation}\label{eq:tariso}
		\Cp\otimes_{C_{I_{n+1}\setminus\{i\}}}\Cp\cong C_{I_{n+1}\setminus\{i\}}\otimes\bigotimes_{i\in V\subsetneq I_{n+1}}(B_V\otimes B_V).
	\end{equation}
	Under this isomorphism, for a basis monomial $M$ in $\Cp\otimes_{C_{I_{n+1}\setminus\{i\}}}\Cp$, write $a_{M,V}$ for its factor at $V$-factor with $V$ not containing $i$, and $b_{M,V}\otimes c_{M,V}$ for its two factors at $V$-factor with $V$ containing $i$. Define the weight of $M$ by $\omega_i(M)=(d_{M,V})_{\varnothing\ne V\subsetneq I_{n+1}}$, where
	\[d_{M,V}=\begin{cases}
	|a_{M,V}|,&i\notin V,\\
	|b_{M,V}|+|c_{M,V}|,&i\in V.
	\end{cases}\]
	Using the lexicographic order, we claim that every basis monomial occurring in $\Delta^2_{n+1,i}(m)$ has weight at most $\omega(m)$. By multiplicativity of $\Delta^2_{n+1,i}$ and additivity of weights, it suffices to consider the case $m=x_{V,v}$. If $i\notin V$, then $\Delta^2_{n+1,i}(x_{V,v})=1\otimes x_{V,v}$ whose weight is the same as $x_{V,v}$. If $i\in V$, naturality implies that only $W$-factors can occur in $\Delta^2_{n+1,i}(x_{V,v})$. Writing $\Delta_{B_V}(v)=\sum v'\otimes v''$, we obtain that
	\[\Delta^2_{n+1,i}(x_{V,v})=\sum x_{V,v'}\otimes x_{V,v''}+x\]
	for some $x\in\mathrm{Ker}(Q^2_{n+1,i})$ by Lemma \ref{lem:prop of delta}.(4). Hence every basis monomial $M$ in $x$ has a positive degree factor in some $W$-factor with $W\subsetneq V$. That is 
	\[d_{M,V}+\sum_{\varnothing\neq W\subsetneq V} d_{M,W}=|v|\]
	with $d_{M,W}>0$ for some $W\subsetneq V$. Hence $\omega_i(M)<\omega(x_{V,v})$ by definition. 
	
	Now, write $y=\sum_m c_m m$ with $m$ basis monomials. Using the lexicographic order, let $(d_W)_W$ be maximal among $\omega(m)$ and set 
	\[z=\sum_{\omega(m)=(d_W)_W}c_m m\ne0.\]
	If $d_W=0$ for every $W$ containing $i$, then $z\in C^{\prime}_{I_{n+1}\setminus\{i\}}$ and $\Delta^2_{n+1,i}(z)=z\otimes 1$. By the above argument and maximality of $(d_W)_W$, comparing $(d_W)_W$ weight parts in $\Delta^2_{n+1,i}(y)$ which implies that $z=2z$, a contradiction. If more than one $W$-factors contain $i$ with $d_W>0$, we choose some $W_0$-factor with $d_{W_0}>0$. Moreover, using Equation \eqref{eq:tariso}, we define
	\[P_W:=\begin{cases}
	p_{W,d_W}\otimes p_{W,0}, & W=W_0,\\
	p_{W,0}\otimes p_{W,d_W}, & W\ne W_0,
	\end{cases}\]
	for $i\in W$, and
	\[P=\left(\bigotimes_{i\notin W}p_{W,d_W}\right)\otimes\left(\bigotimes_{i\in W}P_W\right),\]
	where $p_{W,a}$ be the projection of $B_W$ onto its component of total degree $a$. Since every basis monomial occurring in $\Delta^2_{n+1,i}(m)$ has weight at most $\omega(m)$, we deduce that
	\begin{equation}\label{eq:y-z}
		P\circ \Delta^2_{n+1,i}(y-z)=0.
	\end{equation}
	Since there is another $W$ containing $i$ with $d_W>0$, we have that $P(z\otimes1)=P(1\otimes z)=0$. Hence
	\begin{equation}\label{eq:y+y}
		P(y\otimes1+1\otimes y)=P(z\otimes1+1\otimes z)=0.
	\end{equation}
	By the counit identity, we have that $(p_{W,d_W}\otimes p_{W,0})\circ\Delta_{B_W}(b)=b\otimes1$ and $(p_{W,0}\otimes p_{W,d_W})\circ\Delta_{B_W}(b)=1\otimes b$, for $b\in(B_W)_{d_W}$. Hence, on the $(d_W)_W$ weight, $P\circ\Delta^2_{n+1,i}$ retains every factor and only inserts units and reorders the factors, which induces an isomorphism onto a component of $\Cp\otimes_{C_{I_{n+1}\setminus\{i\}}}\Cp$. Hence $P\circ\Delta^2_{n+1,i}(z)\neq 0$, which contradicts Equations \eqref{eq:y-z} and \eqref{eq:y+y}. It follows that there is exactly one $W$ containing $i$ with $d_W>0$. Hence, for a fixed $i$, we can write $z=\sum a_{\alpha}\otimes b_{\alpha}$ with $a_{\alpha}\in \bigotimes_{\Pi\setminus\{W\}}B_V$ and $b_{\alpha}\in B_W$, where $W\in\Pi$ contains $i$. Consequently, we obtain that
	\[\Delta^2_{n+1,i}(z)=\sum a_{\alpha}\otimes\Delta^2_{n+1,i}(b_{\alpha}).\]
	Again, comparing the maximal weight in $\Delta^2_{n+1,i}(y)$ implies that $\Delta_{B_W}(b_{\alpha})=b_{\alpha}\otimes1+1\otimes b_{\alpha}$. Let $i$ range from $1$ to $n+1$. We obtain that, for every $x_{V,v}$ in $z$, $v$ is primitive in $B_V$. 
	
	Finally, choose the unique $U\in\Pi$ containing $n+1$ and a basis monomial $m=\prod_{V\in \Pi}x_{V,b_V}$ occurring in $z$ with each $b_V$ primitive in $B_V$. If $U=\{n+1\}$, then $|b_U|=|\mu^{p^j}|$ is even. Since the total degree of $y$ is odd, there exist $V\in\Pi\setminus\{U\}$ with $|b_V|$ odd. Set $W=V\cup \{n+1\}$ and consider
	\[w=x_{W,\sigma_{B_V}(b_V)}\prod_{T\in\Pi\setminus\{V\}}x_{T,b_T},\]
	where $\sigma_{B_V}$ is considered as a map from $B_V$ to $B_W$. By Remark \ref{rem:phix}, $\sigma_{B_V}(b_V)\neq0$, since $|b_V|$ is odd. We claim that $w$ only occurs in $\delta^2_{n+1}(m)$ after applying $\delta^2_{n+1}$ to basis monomials of $y$. Hence $\delta^2_{n+1}(y)\neq0$, a contradiction. To see this claim, suppose that $m^{\prime}$ is a basis monomial such that $\delta^2_{n+1}(m^{\prime})$ contains $w$. Note that basis monomials in $y$ cannot contain $V$-factors such that $b_V\neq0$, $n+1\in V$ and $|V|\geq2$, since $m$ has the maximal weight. Since $W$ and $U$ are only factors in $w$ containing non-trivial algebra generators and Connes' operator is a derivation, $m^{\prime}$ contains a $V$-factor by Proposition \ref{prop:connes}, say $x_{V,v}$. If $v=ab$ is not a algebra generator with $a,b\neq 1$, then $\delta^2_{n+1}(x_{V,ab})=\delta^2_{n+1}(x_{V,a})x_{V,b}\pm x_{V,a}\delta^2_{n+1}(x_{V,b})$. The $V$-factor in $w$ is $1$, so $a=1$ or $b=1$, a contradiction. This further implies that $v=b_V$ and $m^{\prime}=m$. Therefore, $|U|\geq2$.
	
	It remains to show that $b_U\neq\varphi^0(a)$. Otherwise, since \(b_U\) has even degree and $m$ has odd total degree, we can choose a $V\in\Pi\setminus\{U\}$ such that $b_V$ has odd degree. In particular, $b_V$ is an exterior algebra generator, and $\sigma_{B_V}(b_V)$ is a nonzero algebra generator by Remark \ref{rem:phix}. Set \(W=V\cup\{n+1\}\) and consider the basis monomial
	\[w=x_{W,\sigma_{B_V}(b_V)}\prod_{T\in\Pi\setminus\{V\}}x_{T,b_T}.\]
	By Proposition \ref{prop:connes}, any contribution to the coefficient of $w$ in $\delta^2_{n+1}(y)$ must create a generator in either the $U$-factor or the $W$-factor, as $\delta^2_{n+1}$ is a derivation. Remark \ref{rem:phix} implies that it cannot create the $U$-factor. Consequently, the newly created generator must lie in the $W$-factor, and its source factor must be $V=W\setminus\{n+1\}$. Since the $V$-factor of $w$ is $1$, the original $V$-factor must be an algebra generator. Since the $W$-factor of $w$ is an algebra generator, the original $W$-factor must be $1$. Corollary \ref{cor:QP} and Remark 2.9 then implies that the original $V$-factor to be $b_V$ and all other factors are unchanged. Hence only $m$ contributes, with coefficient $\pm c_m\neq0$, a contradiction.
	
	Therefore, $b_U$ has filtration one, and so is $y$. The last claim follows from $m$ has the maximal weight. 
\end{proof}

We also need the following lemma which considers a general spectral sequence.

\begin{lem}\label{lem:gen}
	Let $E^r_{*,*}$ be a first-quadrant multiplicative spectral sequence whose bidegree of differentials is $(-r,r-1)$ for $r\geq2$. Suppose that $E^2_{0,0}=\Ff$, $E^2_{1,0}=E^2_{0,1}=0$, and every algebra generator of odd total degree has filtration at most $1$. If $D$ is the smallest total degree of a class which supports a nonzero differential and $R$ is the earliest page on which a class of total degree $D$ supports a nonzero differential, then $D$ is even,
	\begin{equation}\label{eq:d-2}
		E^2_{s,t}=E^{\infty}_{s,t}
	\end{equation}
	for $s+t\leq D-2$, and 
	\begin{equation}\label{eq:D}
		E^2_{s,t}=E^R_{s,t}
	\end{equation}
	for $s+t\leq D$. Moreover, no differential enters total degree $D$ on any page.
\end{lem}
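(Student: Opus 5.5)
The plan is to work with minimal counterexamples and use the Leibniz rule together with the two structural hypotheses: odd generators have filtration $\le 1$, and total degree $1$ vanishes. Throughout, $d^r$ is a derivation on $E^r$, and products on $E^r$ are computed from products of representatives on $E^2$. By definition of $D$, no class of total degree $<D$ supports a nonzero differential on any page. By definition of $R$, no class of total degree $D$ supports a nonzero differential on a page $r<R$.

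First I would record a basic observation. Let $z\in E^2$ be a class of total degree $N$ that survives to $E^r$. Write $z=g+q$, where $g$ is a linear combination of algebra generators of degree $N$ and $q$ is a sum of products of at least two generators of positive degree. Every factor occurring in $q$ has degree between $1$ and $N-1$. Since $E^2_{1,0}=E^2_{0,1}=0$ and $E^2_{0,0}=\Ff$, there are no classes of degree $1$. Hence if a factor has degree $N-1$, its cofactor has degree $1$ and the product is zero. So we may assume all factors have degree $\le N-2$.

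Now take $N\le D+1$. Then every factor has degree $\le D-1<D$, so it is a permanent cycle. By the Leibniz rule, $d^r$ of every monomial in $q$ vanishes, and in particular $q$ survives to $E^r$. If moreover $N$ is odd, then $g$ lies in filtration $\le 1$. Since $d^r$ has bidegree $(-r,r-1)$ with $r\ge2$ and the spectral sequence is first-quadrant, $d^r g$ lands in negative filtration, so $d^r g=0$. Consequently, for $N\le D+1$ odd, every class of degree $N$ on every page is a cycle. A technical point is that $g$ and $q$ individually survive to $E^r$ and $d^r$ of the class of $z$ is $d^r[g]+d^r[q]$. This holds because $q$ is a permanent cycle by Leibniz and $g$ is one for filtration reasons.

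Second, I would show that $D$ is even. Suppose $D$ were odd, and let $x\in E^R$ of degree $D$ with $d^Rx\neq0$. Lift $x$ to a class $z\in E^2$ surviving to $E^R$. The observation with $N=D$ odd gives $d^Rx=0$, a contradiction. Hence $D$ is even, so $D+1$ is odd. Applying the observation with $N=D+1$, every class of degree $D+1$ is a cycle on every page. Since a differential entering total degree $D$ must originate in total degree $D+1$, no differential enters total degree $D$ on any page, which is the last assertion.

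Third, I would deduce the page equalities. In total degrees $\le D-2$, no outgoing differentials exist, by minimality of $D$. Incoming differentials originate in degrees $\le D-1<D$, so they are also zero. This gives \eqref{eq:d-2}. In total degree $D-1$, outgoing differentials vanish by minimality of $D$. Incoming differentials come from degree $D$, and these vanish on pages $r<R$ by the choice of $R$. In total degree $D$, outgoing differentials vanish for $r<R$ by the choice of $R$, and incoming ones vanish by the previous step. Inducting on $r$ from $2$ to $R$ then yields \eqref{eq:D}.

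The main obstacle is the bookkeeping in the observation: one must make sure that Leibniz applies to the chosen $E^2$ lifts, i.e.\ that each factor of each monomial in $q$ actually survives to the relevant page. This is exactly where the vanishing of total degree $1$ is used. It rules out a factor of degree $D$, which might support $d^R$, appearing in a decomposable class of degree $D+1$.
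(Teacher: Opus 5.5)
Your proposal is correct and follows the same route as the paper. The paper also splits a class of odd degree $N\le D+1$ into generators, which vanish under $d^r$ for filtration reasons, and decomposables, which are permanent cycles by minimality of $D$ and the Leibniz rule. It applies this with $N=D$ and then $N=D+1$, and reads off the page equalities. Your version is more careful: you make explicit that $E^2_{1,0}=E^2_{0,1}=0$ is what rules out a degree-$D$ factor in a decomposable class of degree $D+1$, which the paper leaves to ``the same argument,'' and you check separately that $g$ and $q$ each survive to the relevant page.
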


\begin{proof}
	Suppose that $D$ is odd, then we can write the class as $x=\sum g_{\alpha}+\sum u_{\beta}v_{\beta}$, where $g_{\alpha}$ is a algebra generator, $|g_{\alpha}|=D$, $|u_{\beta}|+|v_{\beta}|=D$, and $|u_{\beta}||v_{\beta}|>0.$ Since $R\geq 2$ and $\mathrm{fil}(g_{\alpha})=1$, $d^R(g_{\alpha})=0$. By minimality of $D$, $d^R(u_{\beta})=d^R(v_{\beta})=0$. Hence $d^R(x)=0$, a contradiction. Therefore, $D+1$ is odd and the same argument implies that elements of total degree $D+1$ cannot support a nonzero differential. Hence no differential enters total degree $D$. The remaining two equations follows immediately. 
\end{proof}

Since $\Cc$ satisfies the assumption of Lemma \ref{lem:gen}, we choose $D$ and $R$ as above and fix them in the rest of this section.

\begin{lem}\label{lem:ni}
	For $1\leq i\leq n+1$, there is an isomorphism $E^r_{s,t}(n+1,i)\cong E^2_{s,t}(n+1,i)$ for $s+t\leq D$ and $2\leq r\leq R$. 
\end{lem}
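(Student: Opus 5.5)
The plan is to run the argument of Lemma \ref{lem:gen} on the auxiliary spectral sequence $E^r_{*,*}(n+1,i)$, using Lemma \ref{lem:prop of delta}(1) to reduce everything to $E^r_{*,*}(n+1)$. I would first record the shape of the $E^2$-page. By Proposition \ref{prop:pinch}, $E^2_{*,*}(n+1,i)\cong\Cc\otimes_{C_{I_{n+1}\setminus\{i\}}}\Cc$. By Lemma \ref{lem:prop of delta}(1), this algebra is generated by the images of $j^2_1$ and $j^2_2$. Both $j^r_\alpha$ are multiplicative maps of spectral sequences, since they are induced by maps of simplicial $\Ee_\infty$-rings. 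Consequently every class of $E^2_{*,*}(n+1,i)$ in total degree $\le D$ is a sum of products $j^2_1(a)j^2_2(b)$, where $a,b\in E^2_{*,*}(n+1)$ are homogeneous and $|a|+|b|\le D$.

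Next I would prove the claim by induction on $r$, for $2\le r\le R$. Suppose $E^{r}_{s,t}(n+1,i)=E^2_{s,t}(n+1,i)$ for all $s+t\le D$ and some $r<R$. Let $x=j_1(a)j_2(b)$ be a product of the above form with total degree at most $D+1$.
\begin{itemize}
\item By Lemma \ref{lem:gen}, $d^{r}$ vanishes on $E^{r}_{*,*}(n+1)$ in total degrees $\le D$ for $r<R$. It also vanishes in total degree $D+1$, because no differential enters total degree $D$.
\item Hence $a$ and $b$ survive to $E^{r}_{*,*}(n+1)$ with $d^{r}(a)=d^{r}(b)=0$.
\item Naturality gives $d^{r}(j_\alpha(a))=j_\alpha(d^{r}a)=0$, and the Leibniz rule gives $d^{r}(x)=0$.
\end{itemize}
So no differential $d^{r}$ with $r<R$ leaves total degree $D+1$ or total degree $\le D$. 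Therefore no differential enters or leaves total degree $\le D$, and $E^{r+1}=E^{r}=E^2$ there.

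The main obstacle is making the generation statement hold on later pages as well, not only on $E^2$. I must ensure that the classes of total degree $D+1$ in $E^{r}(n+1,i)$ are still spanned by products of images of $j^{r}_\alpha$. Total degree $D+1$ matters because its differentials land in total degree $D$.

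My first attempt is to argue at $E^2$ and push forward. A differential $d^{r}$ on a class $z$ of total degree $D+1$ with $r<R$ is computed from any $E^2$-representative of $z$. The cycles in total degree $\le D+1$ agree with $E^2$ up to page $r$, because nothing below total degree $D$ has changed, so I may write that representative as $\sum j_1(a)j_2(b)$ already at $E^2$. For this I would use that $a$ and $b$ are $d^{k}$-cycles for all $k\le r$, which Lemma \ref{lem:gen} supplies in the relevant degrees.

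There is a subtlety: when $|a|\le D+1$ or $|b|\le D+1$, the degree $D+1$ part of $E^{r}(n+1)$ could in principle be a proper quotient of $E^2$. That would happen only if differentials hit it from total degree $D+2$. This does not matter, because only the vanishing of differentials on $a$ and $b$ is needed, not their nontriviality. The case $r=R$ then follows formally, since at that stage we only assert that $E^R$ agrees with $E^2$ up to total degree $D$.
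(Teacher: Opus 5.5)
Your proposal is correct and follows the paper's approach. The paper's one-line proof (naturality, the proof of Lemma~\ref{lem:gen}, and the Leibniz rule) amounts to your argument: you use the generation of $E^2_{*,*}(n+1,i)$ by the images of $j^2_1,j^2_2$ from Lemma~\ref{lem:prop of delta}(1), and you transfer the vanishing of $d^r$ for $r<R$ on $E^r_{*,*}(n+1)$ in total degrees $\le D+1$ by naturality and the Leibniz rule, including the case of total degree $D+1$ so that nothing enters total degree $D$. Your extra care in representing total-degree-$(D+1)$ classes on later pages by $E^2$-products of cycles is the right way to make that one-liner rigorous.
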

\begin{proof}
	It follows from naturality, the proof of Lemma \ref{lem:gen} and the Leibniz rule.
\end{proof}

Now, we can proof that $E^r_{*,,*}(n+1)$ collapses at the $E^2$-page. For each $\varnothing\neq V\subsetneq I_{n+1}$, the inductive step gives a representative $\widetilde{x}_{V,v}\in A_V$ of $v\in B_V$ and the assignment $v\mapsto \widetilde{x}_{V,v}$ is an algebra map. The inclusion $V\to I_{n+1}$ also induces a canonical map $\Phi_V\colon A_V\to A_{I_{n+1}}$. Define the algebra map $\Phi\colon \Cp\to A_{I_{n+1}}$ by
\[x_{V,v}\mapsto \Phi_V(\widetilde{x}_{V,v}).\]

\begin{thm}\label{thm:collapse}
	All differentials $d^r$ in $E^r_{*,*}$ are trivial for $r\geq2$. In particular, $\mathrm{gr}A_{n+1}\cong \Cc$ as $\Ff$-algebras.
\end{thm}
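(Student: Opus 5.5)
We argue by contradiction. Suppose some differential is nonzero, and let $D$ and $R$ be as fixed after Lemma \ref{lem:gen}. Then $D$ is even, and Equations \eqref{eq:d-2} and \eqref{eq:D} hold. By Lemma \ref{lem:ni}, the same stability holds for $E^r_{*,*}(n+1,i)$ up to total degree $D$ and $r\le R$.

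\textbf{Step 1: reduce to generators.} Choose a class $x$ of total degree $D$ with $d^R(x)\neq0$. By the Leibniz rule and the minimality of $D$, we may take $x$ to be an algebra generator of even degree. Lemma \ref{lem:gen} rules out products of positive-degree classes. The target $y=d^R(x)$ has odd total degree $D-1$ and lies in $E^R=E^2$ in that degree.

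\textbf{Step 2: constrain the target.} We show that $y$ satisfies the hypotheses of Lemmas \ref{lem:pdesc} and \ref{lem:part}.

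\emph{The image under $q_{n+1}$ vanishes.} The map of spectral sequences $E^r(n+1)\to E^r(S^{n+1})$ commutes with $d^R$, and $E(S^{n+1})$ collapses by Corollary \ref{cor:BncongSn}. Hence $q^R_{n+1}(y)=0$.

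\emph{Primitivity.} The factors $B_V$ with $V\subsetneq I_{n+1}$ come from $A_V$ by Remark \ref{rem:comp} and are permanent cycles. Thus $x$ must essentially be a generator of a factor $B_U$ with $n+1\in U$; I would first check that $x$ can be taken in $B_{I_{n+1}}$ or in the $\mathrm{Tor}$-part. Applying $\Delta^R_{n+1,i}$ and using Lemma \ref{lem:prop of delta}, we get $\Delta(x)=x\otimes1+1\otimes x+(\text{lower terms})$. The lower terms are products of classes of total degree $<D$, which are permanent cycles, so
\[\Delta^R_{n+1,i}(y)=y\otimes1+1\otimes y\]
in $E^R(n+1,i)\cong E^2(n+1,i)$.

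\emph{Connes' operator.} By Proposition \ref{prop:connes}, $\delta^R_{n+1}$ anticommutes with $d^R$ up to sign. This gives $\delta^R(y)=\pm d^R(\delta^R x)$, and $\delta^R x$ has total degree $D+1$. If $\delta^R(x)$ is decomposable or zero, this vanishes; otherwise the target would lie in total degree $D$ with filtration too high. I expect to combine this with the fact from Lemma \ref{lem:gen} that no differential enters degree $D$ to conclude $\delta^R(y)=0$.

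\textbf{Step 3: derive the contradiction.} Lemma \ref{lem:pdesc} gives $y\in\Cp$. Lemma \ref{lem:part} then says that $y$ has filtration $1$. But $d^R$ lands in filtration $s-R$, where $s=\mathrm{fil}(x)$ and $R\ge2$, so we need $s\ge R+1\ge3$.

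The contradiction must come from comparing the filtration of $y$ with that of $x$, possibly through suspension. Here is the plan.
\begin{itemize}
\item Since $y\in\Cp$ maps to $A_{n+1}$ via $\Phi$ and consists of permanent cycles coming from proper subtori, $y$ is represented in filtration $0$ by $\Phi(y)$. Indeed, every element of $\Cp$ has filtration $0$ except factors $B_U$ with $n+1\in U$.
\item Lemma \ref{lem:part} shows that the leading monomial of $y$ involves a factor $b_U$ with $n+1\in U$ and filtration $1$. This factor is in the image of the suspension $\sigma_{T^n}$ by Lemma \ref{lem:susp}.
\item Hence $y$ is represented by a genuine homotopy class detected in filtration $1$, so it is a permanent cycle surviving to $E^\infty$. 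It therefore cannot be a boundary, because a boundary must die.
\end{itemize}
To make the last point precise, I would lift $y$ to $A_{n+1}$ using $\Phi$ on the factors from proper subsets and $\sigma_{T^n}$ on the $U$-factor. Multiplicativity of the filtration then shows the lift is detected by $y$ on $E^\infty$, contradicting $y=d^R(x)$.

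\textbf{Main obstacle.} The hardest step is Step 2: establishing that $\Delta^R_{n+1,i}(y)$ is primitive and that $\delta^R(y)=0$. The error terms in $\Delta^R_{n+1,i}(x)$ and the sign and filtration bookkeeping for $\delta^R$ need careful control, and Lemma \ref{lem:ni} will be used throughout.

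Once every $d^r$ vanishes, the identification $\mathrm{gr}A_{n+1}\cong\Cc$ as algebras follows from Corollary \ref{cor:E2} and the multiplicativity of the spectral sequence.
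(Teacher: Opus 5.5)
Your Steps 1 and 2 essentially follow the paper and are sound. For the Connes step you do not need the case analysis you sketch. Since $\delta^R(y)=(-1)^R d^R(\delta^R x)$ is a $d^R$ landing in total degree $D$, it vanishes by the last clause of Lemma \ref{lem:gen}. The paper instead uses $\delta^2_{n+1}(x)=0$ for generators of $B_{n+1}$, from Proposition \ref{prop:connes}; either route works.

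\textbf{The gap is in Step 3.} The inference that $y$ is a permanent cycle with a genuine homotopy lift and therefore cannot be a boundary is a non sequitur. A class hit by $d^R$ is still a permanent cycle, and it can perfectly well have a lift such as $\Phi(y)\in F_1(n+1)$. Being a boundary just means this lift actually lies in $F_0(n+1)$, i.e.\ its image in $E^\infty_{1,D-1}=F_1/F_0$ is zero. Your first bullet, which puts $\Phi(y)$ in filtration $0$, is exactly what the boundary hypothesis forces, not a contradiction. Saying the lift is ``detected by $y$ on $E^\infty$'' presupposes $[y]\neq0$ in $E^\infty$, which is what you are trying to prove. Nothing available before collapse says $\Phi$ is injective or strictly filtered; that is Proposition \ref{prop:free}, proved only afterwards.

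What is missing is an independent invariant that separates $\Phi(y)$ from elements of $F_0(n+1)$. The paper uses the circle operations $\sigma_{n+1,i}$ for $i\notin U$, in the following steps.
\begin{enumerate}
\item It shows $\sigma_{n+1,i}(F_0(n+1))\subseteq F_1(n+1)$. The image in $E^\infty_{1,*}$ is a multiple of $\lambda_{n+1,i}=\pm x_{\{i,n+1\},\varrho\mu}$, because only $\sigma_{n+1,i}(\mu_{n+1})$ contributes.
\item It shows $\sigma_{n+1,i}$ keeps each filtration-one generator $\widetilde{x}_{V,b}$ with $n+1\in V$ inside $F_1$. This is where Lemma \ref{lem:susp} and the sign from swapping circles enter.
\item It then computes the coefficient of $t=x_{U\cup\{i\},\sigma_{B_U}(b_U)}\prod_{V\neq U}x_{V,b_V}$ in $\sigma_{n+1,i}\Phi(y)$ in $E^\infty_{1,D-1}$, and finds it equals $\pm c_m\neq0$. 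This uses the maximal-weight monomial $m$ of Lemma \ref{lem:part} together with Corollary \ref{cor:QP} and Remark \ref{rem:phix}.
\end{enumerate}
This computation needs two further inputs. First, every monomial of $y$ involves all $n+1$ circles, obtained by collapsing each circle in turn; this bounds the degrees of the factors so their suspensions lie where $E^2=E^\infty$. Second, $E^2_{s,D-s}=E^\infty_{s,D-s}$ for $0\le s\le 2$, which kills the correction term $\lambda_{n+1,i}w$; when $R=2$ this requires a separate $p=3$ congruence argument. None of this appears in your plan, so as written Step 3 produces no contradiction.
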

\begin{proof}
	Suppose that there is a nozero differential. First, note that $x_{V,v}$ cannot support a differential for $V\subsetneq I_{n+1}$. Indeed, if $n+1\notin V$ or $V=\{n+1\}$, then $\mathrm{fil}(x_{V,v})=0$. If $\{n+1\}\subsetneq V$, then the inclusion induces a map of spectral sequences $E^r_{*,*}(V)\to E^r_{*,*}(n+1)$. Since $x_{V,v}$ is a permanent cycle in $E^r_{*,*}(V)$, it cannot support a differential in $E^r_{*,*}(n+1)$. In particular, we can choose an algebra generator $x\in B_{n+1}$ such that $d^R(x)=y\neq0$ which is of total degree $D-1$. Write $y=\sum_mc_mm$ with $m$ basis monomials. For a fixed $1\leq i\leq n+1$, we can write $y=\sum_{S_1}c_mm+\sum_{S_2}c_mm$, where $S_1$ contains all $m$ which contains some $x_{V,v}$ with $i\in V$ and $|v|>0$. Collapsing $i$th $S^1$ in $T^{n+1}$ implies that $\bar{x}=\sum_{S_2}c_m\overline{m}=0$. Since $\{\overline{m}\}_{m\in S_2}$ remain linear independent, $c_m=0$ for $m\in S_2$. In other words, for any basis monomial $m$ in $y$, we have that
	\begin{equation}\label{eq:U(m)}
		U(m)=\bigcup_{|b_V|>0}V=I_{n+1},
	\end{equation}
	where $b_v$ is the $V$-factor in $m$. 
	
	Now, the counit identity implies that $\Delta^2_{n+1,i}(x)-x\otimes1-1\otimes x \in K_i\otimes_{C_{I_{n+1}\setminus \{i\}}}K_i$, where $K_i$ is the kernel of $\Cc\to C_{I_{n+1}\setminus \{i\}}$. Applying $d^R$ to both sides implies that $y$ is primitive with respective to $\Delta^2_{n+1,i}$ for every $1\leq i\leq n+1$, by Lemma \ref{lem:ni}. Moreover, Corollary \ref{cor:BncongSn} implies that $q_{n+1}^R(y)=q_{n+1}^R\circ d^R(x)=d^R\circ q^R_{n+1}(x)=0$, which further shows that $q^2_{n+1}(y)=0$. Hence, by Lemma \ref{lem:pdesc}, we have that $y\in \Cp$. On the other hand, since $x$ is an algebra generator in $B_{n+1}$, $\delta^r_{n+1}(x)=0$ for $2\leq r\leq R$, by Proposition \ref{prop:connes}. Hence the same proposition implies $\delta^R_{n+1}(y)=(-1)^Rd^R\circ\delta^R_{n+1}(x)=0$. Equation \eqref{eq:D} shows that $\delta^2_{n+1}(y)=0$. Consequently, Lemma \ref{lem:part} implies that $\mathrm{fil}(y)=1$ and there is a basis monomial $m$ occurring in $y$ with maximal weight, together with a subset $U\subsetneq I_{n+1}$, such that
	\[m=\prod_{V\in \Pi}x_{V,b_V}\]
	and $n+1\in U\in\Pi$, $|U|\geq 2$. In particular $n\geq2$. Otherwise, $U=I_2$, a contradiction. Note that, by the choice, the filtration of $x$ is $\mathrm{fil}(y)+R=R+1$ and belongs to $\{1,2\}\cup\{p^j,2p^j\ |\ j\geq1\}$. We claim that there is an equivalence
	\begin{equation}\label{eq:2=infty}
		E^2_{s,D-s}(n+1)\cong E^{\infty}_{s,D-s}(n+1),
	\end{equation}
	for $0\leq s\leq2$. Indeed, if $R>2$, then it follows from Lemma \ref{lem:gen}. Next, assume that $R=2$. Then $\mathrm{fil}(x)=3$ and $p=3$. Hence $x=\varrho^1(a)=\gamma_3(\varrho_0a)$ with $|a|$ odd, which implies that $D=|x|\equiv0\pmod 3$. Since every algebra generator of $\Cc$ of filtration degree two has total degree $2+3|b|\equiv2\pmod3$, every basis monomial in $E^2_{2,D-2}(n+1)$ must be decomposable. Lemma \ref{lem:gen} then shows that basis monomials in $E^2_{2,D-2}(n+1)$ are permanent cycles, as there is no element of total degree $1$. The case $s=0,1$ follows immediately from Lemma \ref{lem:gen}.
	
	Write $y=\sum c(\prod_Vx_{V,v})$. Then $V\subsetneq I_{n+1}$ and each $x_{V,v}$ comes from an element in $A_V$. Then	 $\Phi(y)= \sum c(\prod_V\Phi_V(\widetilde{x}_{V,v}))$
	has of filtration degree at most one and have associated graded class given by the image of $y$ in $E^{\infty}$-page. Since $y$ is a boundary, $\Phi(y)\in F_0(n+1)$. We show that this is impossible by using suspension operations. 
		
	To prove this, we first consider the suspension operation on
	\[F_0(n+1)=\mathrm{Im}\left(\Ff[\mu]\otimes A_n\longrightarrow A_{n+1}\right).\]
	For $i\neq n+1$, $\sigma_{n+1,i}$ maps $A_n$ to $A_n$. Hence $\sigma_{n+1,i}$ preserves the image of $A_n$ in $F_0(n+1)$, by naturality. Next, let $\widetilde{\lambda}_{n+1,i}=\sigma_{n+1,i}(\mu_{n+1})\in A_{n+1}$ and $\lambda_{n+1,i}=\epsilon x_{\{i,n+1\},\varrho\mu}\in E^2_{1,2}(n+1)$ for $i\neq n+1$, where $\epsilon=\pm1$ is the fixed sign determined by the chosen coordinates. We can also consider $\lambda_{n+1,i}$ as the image of the same named element in $A_{\{i,n+1\}}$. Then $\widetilde{\lambda}_{n+1,i}=\Phi_{\{n+1,i\}}(\lambda_{n+1,i})\in F_1(n+1)$, since there is no nonzero element in $F_0(n+1)\cap\Phi_{\{n+1,i\}}(A_{\{i,n+1\}})$ having total degree $3$. Since every element of $F_0(n+1)$ is a sum of $\mu_{n+1}^kz\in A_{n+1}$ with $z\in A_n$ considered as an element of $A_{n+1}$ and 
	\[\sigma_{n+1,i}(\mu_{n+1}^kz)=k\mu_{n+1}^{k-1}\widetilde{\lambda}_{n+1,i}z+\mu_{n+1}^k\sigma_{n+1,i}(z)\in F_1(n+1),\]
	we deduce that $\sigma_{n+1,i}(F_0(n+1))\subseteq F_1(n+1)$ for $i\neq n+1$.
	
	Next, we consider generators of filtration one in $V$-factors with $n+1\in V\subsetneq I_{n+1}$. For such a $V$ and a generator $b=\sigma_{B_{V\setminus\{n+1\}}}(a)$ (see Remark \ref{rem:phix}), we have that
	\[\widetilde{x}_{V,b}=\pm\sigma_{n+1,n+1}(\widetilde{x}_{V\setminus\{n+1\},a)})+z\]
	with $z\in F_0(n+1)$. Indeed, by Lemmas \ref{lem:HH(A)} and \ref{lem:susp}, these two classes have the same class $x_{V,b}$ of filtration one on the associated graded. Hence their difference lies in $F_0(n+1)$. Hence
	\begin{equation*}
		\begin{split}
			\sigma_{n+1,i}(\widetilde{x}_{V,b}) & =\pm\sigma_{n+1,i}\circ\sigma_{n+1,n+1}(\widetilde{x}_{V\setminus\{n+1\},a)})+\sigma_{n+1,i}(z)\\
			& =\mp \sigma_{n+1,n+1}\circ\sigma_{n+1,i}(\widetilde{x}_{V\setminus\{n+1\},a)})+\sigma_{n+1,i}(z)
		\end{split}
	\end{equation*}
	for $i\neq n+1$, as interchanging two circles induces multiplication by $-1$. By the above argument, $\sigma_{n+1,i}(z)\in F_1(n+1)$ and $\sigma_{n+1,i}(\widetilde{x}_{V\setminus\{n+1\},a)})\in F_0(n+1)$. Therefore, Lemma \ref{lem:susp} shows that $\sigma_{n+1,n+1}\circ\sigma_{n+1,i}(\widetilde{x}_{V\setminus\{n+1\},a)})\in F_1(n+1)$, which further implies that $\sigma_{n+1,i}(\widetilde{x}_{V,b})\in F_1(n+1)$. 
	
	We have seen that $y\in \Cp$. Hence we can set
	\begin{equation}\label{eq:y as series}
		y=\sum_{a\geq0}\mu_{n+1}^ay_a,
	\end{equation}
	with $y_a\in \Cp$ not involving $\mu_{n+1}$, where $\mu_{n+1}$ denotes the image of $\mu\in B_{\{n+1\}}$ in $E^r_{*,*}(n+1)$. Note that every nonzero $y_a$ has filtration degree one. Consider the element
	\[Y=\sum_{a\geq0}\mu_{n+1}^a\sigma_{n+1,i}(\Phi(y_a))\in (A_{n+1})_D.\]
	Since $\mathrm{fil}(y_a)=1$ and $y_a$ do not involve $\mu_{n+1}$, every monomial of $y_a$ has exactly one filtration degree one generator in a factor containing $\{n+1\}$ properly. Since $\sigma_{n+1,i}$ is a derivation, the above argument implies that $Y\in F_1(n+1)$. Since $|\mu_{n+1}|$ is even, applying $\sigma_{n+1,i}\circ\Phi$ to Equation \eqref{eq:y as series} gives
	\[\sigma_{n+1,i}\circ\Phi(y)=Y+\widetilde\lambda_{n+1,i}\Phi(w)=Y+\Phi(\lambda_{n+1,i}w),\]
	where $w=\sum_{a\geq1}a\mu_{n+1}^{a-1}y_a$. Since $\Phi(y)\in F_0(n+1)$ and $\sigma_{n+1,i}(F_0(n+1))\subseteq F_1(n+1)$, we see that $\Phi(\lambda_{i,n+1}w)\in F_1(n+1)$. Since $\Phi(\lambda_{i,n+1}w)$ represents $\lambda_{n+1,i}w$ in $E^{\infty}_{2,D-2}$, we deduce that $\lambda_{n+1,i}w=0$ in $E^2_{2,D-2}(n+1)$ by Equation \eqref{eq:2=infty}. In particular, we have that
	\begin{equation}\label{eq:s=y}
	\sigma_{n+1,i}\circ\Phi(y)=Y.		
	\end{equation}
	We compare the image of both sides in $E^{\infty}_{1,D-1}\cong F_1(n+1)/F_0(n+1)$. Since $y\in \Cp$, Equation \eqref{eq:U(m)} implies that every basis monomial of $y$ has at least two factors. For each such a factor $b$, $|b|\leq(D-1)-2=D-3,$ since  every positive degree is at least two. Hence $|\sigma_{n+1,i}(b)|=|b|+1\leq D-2$, if $\sigma_{n+1,i}(b)\neq0$. Hence these factors and their images under suspension maps lie in $E^2_{s,t}(n+1)\cong E^{\infty}_{s,t}(n+1)$ for $s+t\leq D-2$, by Equation \eqref{eq:d-2}. By multiplicativity, we compute the following coefficient in $E^2_{1,D-1}(n+1)\cong E^{\infty}_{1,D-1}(n+1)$. Choose $i\notin U$ and consider the coefficient of the monomial
	\[t=x_{W,\,\sigma_{B_U}(b_U)}\prod_{V\in\Pi\setminus\{U\}}x_{V,b_V}\]
	with $W=U\cup\{i\}$. Write $i^W_U\colon T^U\to T^W$ for the inclusion and $\sigma_{W,i}$ for the suspension operation on $T^W\otimes\Ff$. Corollary \ref{cor:QP}, Remark \ref{rem:phix} and Lemma \ref{lem:part} implies that
	\[(q_W)_*\circ\sigma_{W,i}\circ(i^W_U)_*(\widetilde{x}_{U,b_U})=\pm\sigma_{B_U}(b_U)\neq 0.\]
	The same remark and lemma also imply that $\sigma_{B_U}(b_U)$ has filtration one in $\pi_*(S^{n+1}\otimes\Ff)$. Moreover, $F_0(S^{n+1})=\Ff$, so $\sigma_{B_U}(b_U)$ represents a nonzero class in $E^2_{*,*}(S^{n+1})$. Since $\sigma_{n+1,i}(m)\in F_1(n+1)$ and $m$ involve no $\mu_{n+1}$,  $m$ contributes $\pm c_m\neq0$ to the coefficient of $t$ in $E^2_{1,D-1}(n+1)$ in the right of Equation \eqref{eq:s=y}. Moreover, there is no other contribution to the coefficient of $t$ in $Y$. To see this, we only need to consider monomials in $\sigma_{n+1,i}\sigma(\Phi(y_0))$. By Lemma \ref{lem:part}, basis monomials in $y_a$ connot contain a $W$-factor with $W\subsetneq I_{n+1}$, since $n+1\in U\subsetneq W$. Moreover, if $W= I_{n+1}$, then no basis monomial in $y$ can contain a $W$-factor, since $y_a\in \Cp$. Hence, to contribute to a coefficient of $t$, the basis monomial must contain a $K$-factor with $K\cup\{i\}=W$, as $\sigma_{n+1,i}$ is a derivation which maps $K$-factor to $K\cup\{i\}$-factor. Since $i\notin U$, $U\subseteq K$. The maximality of $\omega(m)$ then implies that $U=K$ and the $K$-factor $x_{K,k}$ has degree at most $|b_U|$. Therefore, if it contributes to the coefficient of $t$, then we have that
	\[|b_U|+1\leq|k|+1\leq|b_U|+1.\]
	That is, $|k|=|b_U|>0$. By Lemma \ref{lem:part}, $\mathrm{fil}(b_U)=1$. Hence the filtration degree of $k$ is also one, since $|U|\geq2$ and $\mathrm{fil}(y_a)=1$. Since positive degree elements in $B_U$ have filtration degrees at least one and the filtration degree of nonzero product equals the sum of filtration degrees, $b_U$ and $k$ are indecomposable. Corollary \ref{cor:QP} and Remark \ref{rem:phix} imply that $b_U=k$. Consequently, the other factors are the same. We deduce that $\overline{Y}\in E^2_{1,D-1}(n+1)$ contribute $\pm c_m$ to the coefficient of $t$. On the other hand, since $\Phi(y)\in F_0(n+1)$, we can write $\Phi(y)$ as $\Phi(y)=\sum_{a\geq0}\mu_{n+1}^a\widetilde{y}^{\prime}_a,$ where $y^{\prime}_a\in C_{I_n}$ and $\widetilde{y}^{\prime}_a$ is viewed as an element in $A_{n+1}$ via the canonical inclusion. Images of $\widetilde{y}^{\prime}_a$ under $\sigma_{n+1,i}$ still omit $n+1$ for $1\leq i\leq n$. Hence they have filtration zero. Therefore, only $\sigma_{n+1,i}(\mu_{n+1}^a)$ contributes to filtration one, which implies that
	\[\overline{\sigma_{n+1,i}\circ\Phi(y)}=\lambda_{n+1,i}\sum_{a\geq1}a\mu_{n+1}^{a-1}y^{\prime}_a.\]
	This cannot contributes to the coefficient of $t$, a contradiction. Thus all differentials vanish.
\end{proof}

\section{Multiplicative extensions}\label{sec5}
By Section \ref{sec4}, the spectral sequence $E^r_{*,*}(n+1)$ collapses at the $E^2$-page. Consequently, $\mathrm{gr}A_{n+1}\cong \Cc$ as $\Ff$-algebras. In this section, we resolve multiplicative extensions and complete the induction.

Collapsing $i$th circle in $T^{n+1}$ induces a map $c_i\colon A_{n+1}\to A_{I_{n+1}\setminus\{i\}}$ whose section is $\Phi_{I_{n+1}\setminus\{i\}}$ defined in Section \ref{sec4}.

\begin{prop}\label{prop:free}
	The map $\Phi$ is injective. For any monomial basis $v$ of $B_{n+1}$, there is a lift $\widetilde{v}\in A_{n+1}$ such that $\tilde{1}=1$  and $q_{n+1}(a\widetilde{v})=\epsilon(a)v$, where $a\in\Cp$ and $\epsilon$ is the augmentation of $\Cp$. Moreover, $c_i(\widetilde{v})=0$ for $v\neq1$ and $i\in I_{n+1}$,
	\[A_{n+1}\cong\bigoplus_v\Cp\widetilde{v},\]
	and $A_{n+1}$ is a free $A_U$ module for $U\subsetneq I_{n+1}$.
\end{prop}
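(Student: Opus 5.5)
Since Theorem \ref{thm:collapse} gives $\mathrm{gr}A_{n+1}\cong\Cc$ as $\Ff$-algebras, the plan is to build the lifts $\widetilde v$ first and then get the module decomposition from a filtration and dimension count. To show $\Phi$ is injective, compose it with the projection to $E^\infty$. Each generator $x_{V,v}$ is sent by $\Phi$ to an element whose associated graded class is $x_{V,v}\in\Cc$; by Remark \ref{rem:comp} the chosen lifts are compatible with the identification. Take a nonzero $a\in\Cp$ and let $s$ be the largest filtration among its basis monomials. The sum of the monomials of filtration $s$ is nonzero in $E^2_{s,*}=E^\infty_{s,*}$, so the leading term of $\Phi(a)$ is nonzero and $\Phi(a)\neq0$. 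The same argument shows more: $\Phi(\Cp)$ is a subalgebra whose associated graded is $\Cp\subseteq\Cc$.

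\textbf{Constructing the lifts.} I would first lift the algebra generators $w$ of $B_{n+1}$ (that is, exterior generators and $\gamma_{p^i}(w)$) to classes $\widetilde w\in A_{n+1}$ with two properties: $\widetilde w$ represents $x_{I_{n+1},w}$ in $E^\infty$, and $c_i(\widetilde w)=0$ for every $i$. Start from any representative $z$. The classes $c_i(z)$ are elements of $A_{I_{n+1}\setminus\{i\}}$. Correct $z$ by the following inclusion–exclusion sum over the faces:
\[
\widetilde w \;=\; z-\sum_{\varnothing\neq S\subseteq I_{n+1}}(-1)^{|S|+1}\,\Phi_{I_{n+1}\setminus S}\,c_S(z),
\]
where $c_S$ collapses the circles indexed by $S$. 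The identity $c_i\circ\Phi_{I_{n+1}\setminus S}=\Phi\circ c_i$ for $i\notin S$, together with $c_i\Phi_{I_{n+1}\setminus\{i\}}=\mathrm{id}$, makes the result vanish under every $c_i$; this is the standard cubical retraction argument. The correction terms lie in $\Phi(\Cp)$. In the associated graded they are sums of monomials that omit the $I_{n+1}$-factor, so they change only the $\Cp$-part of the representative. The new representative is $x_{I_{n+1},w}$ plus such terms, and its image under $q_{n+1}$ is still $w$, because $q_{n+1}$ kills $\Cp$ in positive degrees.

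To get a lift for each basis monomial $v=\prod w_j^{e_j}$, set $\widetilde v=\prod\widetilde w_j^{e_j}$ and $\widetilde 1=1$. Multiplicativity of $c_i$ gives $c_i(\widetilde v)=0$ for $v\neq1$. Since $q_{n+1}$ is an algebra map sending $\Phi(a)$ to $\epsilon(a)$, we get $q_{n+1}(a\widetilde v)=\epsilon(a)v$. This assumes $q_{n+1}(\widetilde v)=v$ holds as products in $B_{n+1}$. Here $\gamma_{p^i}$ products of the lifts might not multiply correctly; taking the basis to be products of the chosen generators, as in the basis-monomial definition of Section \ref{sec4}, avoids the problem.

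\textbf{Freeness.} Consider $\bigoplus_v\Cp\to A_{n+1}$, $(a_v)\mapsto\sum\Phi(a_v)\widetilde v$. On associated graded this map is the multiplication $\Cp\otimes B_{n+1}\to\Cc$, which is an isomorphism. Both filtrations are finite and exhaustive in each total degree, and all dimensions are finite, so the map itself is an isomorphism. Freeness over $A_U$ for $U\subsetneq I_{n+1}$ follows because $\Cp$ is a tensor product in which $C_U\cong A_U$ is a tensor factor (by the inductive hypothesis), so $\Cp$ is free over $C_U$.

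\textbf{Main obstacle.} The delicate point is making the graded correction compatible with the multiplicative structure. The inclusion–exclusion must not disturb $q_{n+1}$, which requires $q_{n+1}\circ\Phi_V$ to be zero in positive degrees. It also requires that products of lifts still represent the basis monomials. That holds in $E^\infty$ because the spectral sequence is multiplicative and collapses, but checking that no filtration jump spoils the leading term needs care with filtration degrees.
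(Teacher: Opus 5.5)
Your route is essentially the paper's. You prove injectivity of $\Phi$ by a leading-term argument, kill the faces with the commuting idempotents $\Phi_{I_{n+1}\setminus\{i\}}c_i$, and get freeness from the associated graded. Your inclusion--exclusion sum is just the expanded form of the paper's composite $\prod_i(\mathrm{id}-\Phi_{I_{n+1}\setminus\{i\}}c_i)$. The paper lifts each basis monomial separately; you lift only the algebra generators and take products. That is a harmless variant, and it makes $c_i(\widetilde v)=0$ and $q_{n+1}(\widetilde v)=v$ follow from multiplicativity.

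There is, however, one step that does not follow as written. You start from an \emph{arbitrary} representative $z$ of $x_{I_{n+1},w}$ in $E^\infty$, and then say that $q_{n+1}$ of the corrected class is ``still $w$''. The map on $E^\infty$ is $q^\infty_{n+1}=\epsilon\otimes\mathrm{id}$, and it only controls the leading term. So you only know $q_{n+1}(z)=w+w'$ with $w'\in F_{s-1}(S^{n+1})$, where $s=\mathrm{fil}(w)$, and nothing forces $w'=0$. The face corrections do not change $q_{n+1}$ in positive degrees. Hence the defining property of a lift, $q_{n+1}(\widetilde w)=w$, is never established, and neither is $q_{n+1}(a\widetilde v)=\epsilon(a)v$.

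The missing step is the iteration in the paper's second paragraph. Since $q^\infty_{n+1}$ is surjective on each filtration quotient and the filtrations are finite in each total degree, $q_{n+1}$ maps $F_{s-1}(n+1)$ onto $F_{s-1}(S^{n+1})$. So you can replace $z$ by $z-z'$, with $z'\in F_{s-1}(n+1)$ and $q_{n+1}(z')=w'$, without changing its $E^\infty$-class. After that, your argument goes through.

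You can also drop the hedge that the face corrections ``change only the $\Cp$-part'' of the leading term. Every correction lies in $F_{s-1}(n+1)$:
\begin{itemize}
\item for $i\neq n+1$, the map $c_i$ is filtered and $c_i^\infty$ kills $x_{I_{n+1},w}$;
\item the maps $\Phi_V$ are filtered;
\item the terms with $n+1\in S$ land in $F_0(n+1)$, and $s\geq 1$.
\end{itemize}
Consequently the associated graded of $\bigoplus_v\Cp\to A_{n+1}$ is literally the multiplication map $\Cp\otimes B_{n+1}\to\Cc$, as you claim.
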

\begin{proof}
	Considering $\Cp$ as a subalgebra of $\Cc$ equips $\Cp$ with a bigraded algebra structure. We can further define a filtration on $\Cp$ by $F_i(\Cp):=(\Cp)_{\leq i,t}.$ Then $\Phi$ is a filtered map and $\mathrm{gr}(\Phi)\colon \Cp\cong \mathrm{gr}(\Cp)\to\Cc$ with $\mathrm{gr}(\Phi(x))=x\otimes1\in \Cp\otimes B_{n+1}$ by its definition. Hence $\Phi$ is injective. In particular, we can identify $\mathrm{Im}(\Phi)$ with $\Cp$. 
	
	Next, take the lift of $1$ to be $1$. For $v\neq1$, since $q_{n+1}^2=\epsilon\otimes id\colon\Cp\otimes B_{n+1}\to B_{n+1}$, we choose a lift $x$ of each monomial basis $v$ of $B_{n+1}$ in $A_{n+1}$ whose image in $E^{\infty}_{*,*}(n+1)$ is $1\otimes v$. Hence $q_{n+1}(x)=v+v^{\prime}$ and $\mathrm{fil}(v^{\prime})<\mathrm{fil}(v)$. Choose a lift $x^{\prime}$ of $v^{\prime}$ whose image in $E^{\infty}_{*,*}(n+1)$ is $1\otimes v^{\prime}$. We have that $q_{n+1}(x-x^{\prime})=v+v^{\prime\prime}$ and $\mathrm{fil}(v^{\prime\prime})<\mathrm{fil}(v^{\prime})$. Repeating this process, we obtain a lift $\bar{v}$ of $v$. Set $v_1=\bar{v}$ and inductively define
	\[v_{i+1}:=(id-\Phi_{I_{n+1}\setminus\{i\}}\circ c_i)(v_i)\]
	for $1\leq i\leq n+1$. Let $\widetilde{v}=v_{n+2}$. This is a lift of $v$, as $q_{n+1}\circ\Phi_{I_{n+1}\setminus\{i\}}(x)=0$ for $|x|>0$. Moreover, for $1\leq j<k\leq n+1$, we have that
	\begin{equation*}
		\begin{split}
			c_j\circ (id-\Phi_{I_{n+1}\setminus\{k\}}\circ c_k) & =c_j-i_{j,k}\circ c_j\circ c_k\\
			& =c_j-i_{j,k}\circ c_k\circ c_j
		\end{split}
	\end{equation*}
	where $i_{j,k}$ is the canonical inclusion $A_{I_{n+1}\setminus\{j,k\}}\to A_{n+1}.$ Hence $c_i(\tilde{v})=0$ since $c_i(v_{i+1})=0$. The equiation $q_{n+1}(a\widetilde{v})=\epsilon(a)v$ follows from the equation $q_{n+1}(a)=\epsilon(a)$.
	
	Finally, the multiplication induce a map from $\bigoplus_v\Cp\widetilde{v}$ to $A_{n+1}$ whose associated graded is the identity. It is injective by the same argument as in the first paragraph. The method used in the second paragraph shows that it is surjective. The last statement follows from the isomorphism $A_U\cong C_U$ by inductive hypothesis.
\end{proof}

Next, set $\overline\Delta_{n+1,i}(y)=\Delta_{n+1,i}(y)-y\otimes1-1\otimes y$. We have the following form of Lemma \ref{lem:pdesc}.

\begin{lem}\label{lem:hdesc}
	If $y\in A_{n+1}$ satisfies
	\[q_{n+1}(y)=0\qquad \mathrm{and} \qquad \overline\Delta_{n+1,i}(y)\in \Cp\otimes_{C_{I_{n+1}\setminus\{i\}}}\Cp\]
	for $1\leq i\leq n+1$, then $y\in C'_{I_{n+1}}$.
\end{lem}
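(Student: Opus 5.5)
We run the argument of Lemma \ref{lem:pdesc}, now directly in $A_{n+1}$, using the free basis from Proposition \ref{prop:free}. By that proposition, write
\[y=\sum_v a_v\widetilde{v},\qquad a_v\in\Cp,\]
with $v$ ranging over the monomial basis of $B_{n+1}$. It suffices to show $a_v=0$ for all $v\neq1$, since then $y=a_1\in\Cp$.

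Suppose not, and choose $v\neq1$ of maximal degree with $a_v\neq0$. The target of $\Delta_{n+1,i}$ is $A_{n+1}\otimes_{C_{I_{n+1}\setminus\{i\}}}A_{n+1}$. It is free over $\Cp\otimes_{C_{I_{n+1}\setminus\{i\}}}\Cp$ on the elements $\widetilde{v}\otimes\widetilde{w}$, because $A_{n+1}$ is free over $\Cp$, hence over $C_{I_{n+1}\setminus\{i\}}$.

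\textbf{Step 1: the coefficient of $1\otimes\widetilde{v}$.} We compare coefficients of $1\otimes\widetilde{v}$ on both sides of
\[\Delta_{n+1,i}(y)=y\otimes1+1\otimes y+\overline\Delta_{n+1,i}(y).\]
\begin{enumerate}
\item By hypothesis $\overline\Delta_{n+1,i}(y)$ lies in $\Cp\otimes_{C_{I_{n+1}\setminus\{i\}}}\Cp$, i.e.\ in the $\widetilde{1}\otimes\widetilde{1}$ component. So it contributes nothing here, since $v\neq1$.
\item The term $y\otimes1+1\otimes y$ contributes $1\otimes a_v$.
\item For $\Delta_{n+1,i}(y)=\sum_w\Delta_{n+1,i}(a_w)\Delta_{n+1,i}(\widetilde{w})$, we need the coefficient of $1\otimes\widetilde{v}$ in $\Delta_{n+1,i}(\widetilde{w})$.
\end{enumerate}
For the last item we use Lemma \ref{lem:prop of delta}(4) together with $q_{n+1}(a\widetilde{w})=\epsilon(a)w$ and the counit property of $\Delta_{B_{n+1}}$. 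Applying $Q_{n+1,i}$, the $\widetilde{v}$-components of $\Delta_{n+1,i}(\widetilde{w})$ are detected by $\Delta_{S^{n+1}}(w)$. Thus for $|w|=|v|$ the coefficient of $1\otimes\widetilde{v}$ is $\delta_{w,v}$ up to elements of the augmentation ideal. For $|w|<|v|$ it vanishes for degree reasons, and $|w|>|v|$ is excluded by maximality.

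\textbf{Main obstacle.} The delicate step is making the previous detection argument rigorous: the $\widetilde{v}\otimes\widetilde{w}$-coefficients live in $\Cp\otimes_{C_{I_{n+1}\setminus\{i\}}}\Cp$, and $Q_{n+1,i}$ only sees their augmentations. I would handle this by filtering by the degree of the $\widetilde{v}$-part and comparing top-degree components, exactly as in the $E^2$-page argument of Lemma \ref{lem:pdesc}. The needed freeness over the target is supplied by Proposition \ref{prop:free}.

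\textbf{Step 2: conclusion.} Step 1 yields $\Delta_{n+1,i}(a_v)=1\otimes a_v$ plus terms killed by $\rho_2$. Applying $\rho_2$ and using Lemma \ref{lem:prop of delta}(2) gives
\[a_v=\rho_2\Delta_{n+1,i}(a_v)=c_i(a_v)\in C_{I_{n+1}\setminus\{i\}}\]
for every $i$. Hence
\[a_v\in\bigcap_iC_{I_{n+1}\setminus\{i\}}=\Ff.\]
Then $q_{n+1}(y)=\sum_w\epsilon(a_w)w$ has the nonzero coefficient $a_v$ at $v$, contradicting $q_{n+1}(y)=0$. Therefore all $a_v$ with $v\neq1$ vanish, and $y\in\Cp$.
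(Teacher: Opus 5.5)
Your proposal is correct and is the paper's argument. The paper likewise writes $y=\sum_v a_v\widetilde{v}$ using Proposition \ref{prop:free}, notes that $\overline\Delta_{n+1,i}(y)$ has zero $1\otimes\widetilde{v}$-coefficient for $v\neq1$, and then reruns the proof of Lemma \ref{lem:pdesc}. Your ``main obstacle'' dissolves by degree: for $|w|=|v|$ the $\widetilde{1}\otimes\widetilde{v}$-coefficient of $\Delta_{n+1,i}(\widetilde{w})$ has degree $0$, so it is a scalar, and $Q_{n+1,i}$ with the counit identity shows it is exactly $\delta_{w,v}$; hence $\Delta_{n+1,i}(a_v)=1\otimes a_v$ on the nose, with no correction terms.
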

\begin{proof}
	By Proposition \ref{prop:free}, we can write $y=\sum_v a_v\widetilde v$. Since $\overline\Delta_{n+1,i}(y)\in \Cp\otimes_{C_{I_{n+1}\setminus\{i\}}}\Cp$, the coefficient of $1\otimes\widetilde{v}$ is zero in $\overline\Delta_{n+1,i}(y)$ for $v\neq1$. The rest of proof is the same as that of Lemma \ref{lem:pdesc}.
\end{proof}

Proposition \ref{prop:free} also verifies the hypotheses of Lemma \ref{lem:suspn+1}, which gives the following lemma.

\begin{lem}\label{lem:ker}
	Under the identification of Proposition \ref{prop:free}, there is an isomorphism
	\[\bigcap_{i\in I_{n+1}}\mathrm{Ker}\bigl(\sigma_{n+1,i}\colon C'_{I_{n+1}}\to A_{n+1}\bigr)= P_{I_{n+1}}^p.\]
\end{lem}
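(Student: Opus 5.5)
The equality has two inclusions. The inclusion "$\subseteq$" follows from Lemma \ref{lem:suspn+1}. Its hypothesis, the assumption of Lemma \ref{lem:pdesc}, asks for two things. First, $A_{n+1}$ must contain $\Cp$ as a subalgebra; Proposition \ref{prop:free} gives this, since $\Phi$ is injective. Second, the homogeneous lifts $\widetilde v$ of the monomial basis of $B_{n+1}$ must form a free $\Cp$-basis with $\widetilde 1=1$; this is also part of Proposition \ref{prop:free}. So if $y\in \Cp$ lies in every $\mathrm{Ker}(\sigma_{n+1,i})$, Lemma \ref{lem:suspn+1} gives $y\in P_{I_{n+1}}^p$. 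Here $P_{I_{n+1}}^p$ means the subalgebra $\Ff[\mu_i^p\mid i\in I_{n+1}]$, viewed inside $\Cp$ via $\Phi$.

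For "$\supseteq$", let $i,j\in I_{n+1}$. By the commutative square relating $\sigma_{V,i}$ and $\sigma_{n+1,i}$ for $V\subseteq I_{n+1}$, together with naturality, I compute $\sigma_{n+1,i}(\mu_j)$ in two cases.

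If $i\neq j$, the class $\mu_j$ comes from $A_{\{j\}}$ through the collapse along the $i$th circle. The $i$th circle action then restricts to the trivial action on $T^{\{j\}}\otimes\Ff$, that is, to the suspension of a class pulled back from a space on which the circle acts trivially. So $\sigma_{n+1,i}(\mu_j)$ should not itself vanish; for instance, $\widetilde\lambda_{n+1,i}=\sigma_{n+1,i}(\mu_{n+1})$ is nonzero. The vanishing we need comes instead from the derivation property.

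By \cite[Proposition 5.10]{AR05}, $\sigma_{n+1,i}$ is a derivation, so
\[\sigma_{n+1,i}(\mu_j^p)=p\,\mu_j^{p-1}\sigma_{n+1,i}(\mu_j)=0\]
because $p=0$ in $\Ff$. Hence every $\mu_j^p$ lies in each kernel. Since a derivation kills products of elements it kills and kills $1$, the whole subalgebra $P_{I_{n+1}}^p$ lies in the intersection. This is exactly the fact already used inside the proof of Lemma \ref{lem:suspn+1}.

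The main point to verify is the identification. The degree-one operation $\sigma_{n+1,i}$ on homotopy must really be a derivation with the correct Koszul signs, and $\mu_j^p$ must be even, so that the Leibniz rule produces the factor $p$. I would take the derivation property from the $S^1$-action on $T^{n+1}\otimes\Ff$ as in \cite{AR05}, exactly as the paper does. I would also check that "identification of Proposition \ref{prop:free}" means composing with $\Phi$, so that the kernel is computed for $\sigma_{n+1,i}\circ\Phi$. With that settled, the two inclusions give the stated equality.
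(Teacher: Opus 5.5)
Your argument is correct and matches the paper's. The paper gets ``$\subseteq$'' exactly as you do: Proposition~\ref{prop:free} supplies the hypotheses of Lemma~\ref{lem:suspn+1}. The paper's ``$\supseteq$'' is the derivation remark (via \cite[Proposition 5.10]{AR05}) already made inside the proof of Lemma~\ref{lem:suspn+1}. Your paragraph on $\sigma_{n+1,i}(\mu_j)$ for $i\neq j$ is unnecessary and its ``trivial action'' heuristic is muddled, so drop it: the computation $\sigma_{n+1,i}(\mu_j^p)=p\,\mu_j^{p-1}\sigma_{n+1,i}(\mu_j)=0$ already handles every $i,j$.
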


Now, we can resolve multiplicative extensions and prove our main theorem.

\begin{proof}[Proof of Theorem \ref{thm:main}]
	By Proposition \ref{prop:free}, the multiplication in $\Cp\cong\mathrm{Im}(\Phi)$ is already determined. Next, we choose lifts of algebra generators of $B_{n+1}$ which represent compatible classes in the $E^2$-page (see Remark \ref{rem:comp}) and satisfy the desired relations, by induction on the degree of algebra generators of $B_{n+1}$. At the degree $d$ step, we assume that the lifts of all algebra generators of degree less than $d$ satisfy the desired relations in $A_{n+1}$, and that there is an $\Ff$-algebra map 
	\[\Phi_{\leq d-1}\colon\Cp\otimes B_{n+1}^{\leq d-1}\longrightarrow A_{n+1}\]
	which is an isomorphism in less than $d$, where $B_{n+1}^{\leq d-1}$ is the subalgebra of $B_{n+1}$ generated by algebra generators of degree less than $d$.
	
	For the initial step, we can choose a lift $\widetilde{v}$ for an algebra generator $v$ of minimal degree $d_0$ by Proposition \ref{prop:free}. If $d_0$ is odd, then $\widetilde{v}^2=0$, as $p$ is odd. Next, suppose that $d_0$ is even. Since $c_i$ admits a section, there is an isomorphism $A_{n+1}\cong A_{I_{n+1}\setminus\{i\}}\oplus\mathrm{Ker}(c_i)$ of $A_{I_{n+1}\setminus\{i\}}$-modules. In particular, we have an isomorphism
	\begin{equation*}
		\begin{split}
			& A_{n+1}\otimes_{A_{I_{n+1}\setminus\{i\}}}A_{n+1}\\
			& \cong A_{I_{n+1}\setminus\{i\}}\oplus(\mathrm{Ker}(c_i)\otimes_{A_{I_{n+1}\setminus\{i\}}}\mathrm{Ker}(c_i).)\\
			& \oplus (\mathrm{Ker}(c_i)\otimes_{A_{I_{n+1}\setminus\{i\}}}A_{I_{n+1}\setminus\{i\}})\oplus (A_{I_{n+1}\setminus\{i\}}\otimes_{A_{I_{n+1}\setminus\{i\}}}\mathrm{Ker}(c_i)).
		\end{split}
	\end{equation*}
	Moreover, the counit identity implies that
	\begin{equation*}
		\begin{split}
			& (c_i\otimes id)\circ\overline{\Delta}_{n+1,i}(\widetilde{v})=\widetilde{v}-c_i(\widetilde{v})-\widetilde{v}=0,\\
			& (id\otimes c_i)\circ\overline{\Delta}_{n+1,i}(\widetilde{v})=\widetilde{v}-\widetilde{v}-c_i(\widetilde{v})=0,
		\end{split}
	\end{equation*}
	as $c_i(x)=0$. The proceeding isomorphism implies that $\overline{\Delta}_{n+1,i}(\widetilde{v})\in \mathrm{Ker}(c_i)\otimes_{A_{I_{n+1}\setminus\{i\}}}\mathrm{Ker}(c_i)$. The minimality of $d_0$ then implies that $\overline{\Delta}_{n+1,i}(\widetilde{v})\in \Cp\otimes_{A_{I_{n+1}\setminus\{i\}}}\Cp$, as $(\mathrm{Ker}(c_i))_0=0$. Since $d_0$ is even, the same holds for $\overline{\Delta}_{n+1,i}(\widetilde{v}^p).$ Moreover, $q_{n+1}(\widetilde{v}^p)=v^p=0$. Therefore, Lemma \ref{lem:hdesc} implies that $\widetilde{v}^p\in\Cp$. Since $\sigma_{n+1,i}$ is a derivation, $\sigma_{n+1,i}(\widetilde{v}^p)=0$, which implies that $\widetilde{v}^p\in P_{I_{n+1}}^p$ by Lemma \ref{lem:ker}. Choose a unique $f_v\in P_{I_{n+1}}$ such that $f_v^p=\widetilde{v}^p$ and set the lift of $v$ to be $\widetilde{v}-f_v$. Since $f_v$ have filtration degree $0$, this lift represents the same class as $v$ in the associated graded. Moreover, $q_{n+1}(f_v)=0$. Hence $q_{n+1}(\widetilde{v}-f_v)=v$. This gives an $\Ff$-algebra map $\Phi_{\leq d_0}\colon\Cp\otimes B_{n+1}^{\leq d_0}\to A_{n+1}$. We need to show that $\Phi_{\leq d_0}$ is an isomorphism in degree $d_0$. By Proposition \ref{prop:free}, there is an isomorphism of $\Ff$-modules $(A_{n+1})_{d_0}\cong(\Cp)_{d_0}\oplus\bigoplus_{|v|=d_0}\Ff\{\widetilde{v}\}.$ The composition $\Psi_d\circ(\Phi_{\leq d_0})_d^{-1}$ maps $1\otimes v$ to $\widetilde{v}-f_v$, which is clearly an isomorphism. Hence $(\Phi_{\leq d_0})_d$ is also an isomorphism. Moreover, $c_i(f^p)=c_i(\widetilde{v}^p)=0$, which implies that $c_i(f)=0$, as its image is a polynomial. Therefore, $c_i(\widetilde{v}-f)=0$.
	
	At the degree $d$ step, assume that the lifts of algebra generator of degree less that $d$ satisfy the desired relations and give the $\Ff$-algebra isomorphism $\Cp\otimes B_{n+1}\cong A_{n+1}$ under degree $d$. Let $v$ be an algebra generator of degree $d$ and choose a lift $\widetilde{v}$ as in Proposition \ref{prop:free}. Again, if $d$ is odd, then $\widetilde{v}^2=0$. Therefore, we assume that $d$ is even. Then the same argument as the initial step implies that $\overline{\Delta}_{n+1,i}(\widetilde{v})\in \mathrm{Ker}(c_i)\otimes_{A_{I_{n+1}\setminus\{i\}}}\mathrm{Ker}(c_i)$. The equation $(\mathrm{Ker}(c_i))_0=0$ shows that degrees of factors in both $\mathrm{Ker}(c_i)$ are less than $d$. By the inductive hypotheses, only monomials containing only $\mu_j$ can survive after taking $p$th powers. Hence we deduce that $\overline{\Delta}_{n+1,i}(\widetilde{v}^p)\in\Cp\otimes_{A_{I_{n+1}\setminus\{i\}}}\Cp$. Again, the same argument as the initial step implies that we can choose a lift $x$ of $v$ such that $x^p=0$, $x$ represents the same clasee as $v$ in the associated graded, $q_{n+1}(x)=v$, and $c_i(x)=0$. 
	
	After determining lifts of algebra generators less than degree $d$, we can lift the $\Ff$-algebra map $\Phi_{\leq d-1}$ to an $\Ff$-algebra map
	\[\Phi_{\leq d}\colon\Cp\otimes B_{n+1}^{\leq d}\longrightarrow A_{n+1}.\]
	To complete the inductive step, it remains to show that $(\Phi_{\leq d})_d$ is an isomorphism. By our choice, the restriction of $q_{n+1}\circ\Phi_{\leq d}$ to $B_{n+1}^{\leq d}$ equals the subalgebra inclusion. Hence Proposition \ref{prop:free} implies that
	\[\Phi_{\leq d}(1\otimes v)=\widetilde{v}+\sum_{|w|<|v|}a_{w,v}\widetilde{w}\]
	with $a_{w,v}\in(\Cp)_{>0}$. The same proposition also gives an isomorphism of $\Ff$-modules
	\[\Psi_d\colon(\Cp\otimes B_{n+1})_d\xrightarrow{\cong}(A_{n+1})_d\]
	with $\Psi_d(c\otimes b)=c\widetilde{b}$. Now, the map $\Psi_d^{-1}\circ(\Phi_{\leq d})_d-id$ maps $1\otimes v$ to $\sum_{|w|<|v|}a_{w,v}\otimes\widetilde{w}$. Note that both $\Phi_{\leq d}$ and $\Psi_d$ are $\Cp$-linear, so we have that
	\begin{equation*}
		\begin{split}
			(\Psi_d^{-1}\circ(\Phi_{\leq d})_d-id)(\sum_{|w|<|v|}a_{w,v}\otimes w) & =\sum_{|w|<|v|}(a_{w,v}\otimes1)\cdot(\Psi_{|w|}^{-1}\circ(\Phi_{\leq d})_{|w|}-id)(1\otimes w)\\
			& = \sum_{|w|<|v|}\left[(a_{w,v}\otimes1)\cdot\sum_{|u|<|w|}(a_{u,w}\otimes\widetilde{u})\right].
		\end{split}
	\end{equation*}
	Iterating this process gives $(\Psi_d^{-1}\circ(\Phi_{\leq d})_d-id)^{\circ (d+1)}=0.$ Denote this map by $F$. We have that
	\[(id+F)\circ\left(\sum_{i=0}^d(-F)^{\circ i}\right)=\left(\sum_{i=0}^d(-F)^{\circ i}\right)\circ(id+F)=id.\]
	Since $(\Phi_{\leq d})_d=\Psi_d\circ(id+F)$, we see that $(\Phi_{\leq d})_d^{-1}=\left(\sum_{i=0}^d(-F)^{\circ i}\right)\circ\Psi_d^{-1}$. In particular, $(\Phi_{\leq d})_d$ is an isomorphism.
\end{proof}

\subsection*{Acknowledgements}
The author is grateful to his advisor Xiangjun Wang for his guidance and support.

\subsection*{Statement on AI use}
This article builds on the author's earlier work, which identified the $E^2$-page with Hochschild homology and computed $\pi_*(T^n\otimes\mathbb F_p)$ for all $n\ge1$ and $p\geq5$, assuming compatibility of Connes' operator with the spectral sequence. The project remained on hold until GPT-6-Astra assisted the author in establishing this compatibility (see Proposition \ref{prop:connes}) and revising several earlier arguments, particularly the proofs of Proposition \ref{prop:pinch} and Lemma \ref{lem:part}. AI assistance was also used in developing the extension to $p=3$, particularly the $R=2$ argument establishing Equation \eqref{eq:2=infty} in the proof of Theorem 4.6, and in language editing. The author takes full responsibility for the mathematical content and the final version of the manuscript.

\end{document}